\newenvironment{sqcases}{%
  \matrix@check\sqcases\env@sqcases
}{%
  \endarray\right.%
}
\def\env@sqcases{%
  \let\@ifnextchar\new@ifnextchar
  \left\lbrack
  \def\arraystretch{1.2}%
  \array{@{}l@{\quad}l@{}}%
}
\theoremstyle{plain} 
\author{\sc Nina Zubrilina \\  Massachusetts Institute of Technology \\182 Memorial Dr., Simons Building, $2$-$336$ \\ Cambridge, MA, USA\\email:  nina57@mit.edu}
\date{}
\newcommand{\pushright}[1]{\ifmeasuring@#1\else\omit\hfill$\displaystyle#1$\fi\ignorespaces}
\newcommand{\pushleft}[1]{\ifmeasuring@#1\else\omit$\displaystyle#1$\hfill\fi\ignorespaces}
\DeclarePairedDelimiter\floor{\lfloor}{\rfloor}
\begin{document}

\definecolor{test}{RGB}{100,160,180} 

\def\nina#1{\textcolor{test}{\upshape {#1}\upshape}}
\def\red#1{\textcolor{red}{\upshape {#1}\upshape}}

\newtheorem{thm}{Theorem}
\newtheorem{cor}[thm]{Corollary}
\newtheorem{prp}{Proposition}
\newtheorem*{est}{Theorem}
\newtheorem*{sv}{Large Sieve Inequality}
\newtheorem{theorem}{Theorem}[section]
\newtheorem*{theoremnonum}{Theorem}
\newtheorem*{cornonum}{Corollary}
\newtheorem{observation}[theorem]{Observation}
\newtheorem{corollary}[theorem]{Corollary}
\newtheorem{corol*}{Corollary}
\newtheorem{lemma}[theorem]{Lemma}
\newtheorem{prop}[theorem]{Proposition}
\newtheorem{prob}[theorem]{Problem}
\newtheorem{lemma*}{Lemma}
\newtheorem{defn}[theorem]{Definition}
\newtheorem{de*}{Definition}
\newtheorem{remark}[theorem]{Remark}
\newtheorem*{rem}{Remark}
\newtheorem*{lem}{Lemma}

\newenvironment{letter}{\begin{enumerate}[a)]}{\end{enumerate}}

\newlist{num}{enumerate}{1}
\setlist[num, 1]{label = (\alph*)}
\newcommand{\myitem}{\item}

\newcommand{\Mod}[1]{\hspace{0.02in} \mathrm{mod}\hspace{0.02in}  #1}

\renewcommand{\S}{\mathcal{T}_r}
\newcommand{\Prob}{\mathbb{P}}
\newcommand{\E}{\mathbb{E}}
\newcommand{\Q}{\mathbb{Q}}
\newcommand{\R}{\mathbb{R}}
\newcommand{\N}{\mathbb{N}}
\newcommand{\Z}{\mathbb{Z}}
\newcommand{\F}{\mathcal{F}}
\renewcommand{\P}{\mathcal{P}}
\newcommand{\C}{\mathbb{C}}
\newcommand{\Cl}{\operatorname{Cl}}
\newcommand{\disc}{\operatorname{disc}}
\newcommand{\Gal}{\operatorname{Gal}}
\newcommand{\Img}{\operatorname{Im}}
\renewcommand{\sl}{\operatorname{SL}}
\newcommand{\eps}{\varepsilon}
\newcommand{\ind}{\mathbbm{1}}
\renewcommand{\phi}{\varphi}
\newcommand{\ttl}{\mathrm{Tot}}
\newcommand{\vol}{\mathrm{Vol}}
\newcommand\ang[1]{\left\langle#1\right\rangle}

\renewcommand{\L}{\mathcal{L}}
\newcommand{\sym}{\mathrm{Sym}}

\renewcommand{\d}{\partial}
\newcommand*\Lapl{\mathop{}\!\mathbin\bigtriangleup}
\newcommand{\mat}[4] {\left(\begin{array}{cccc}#1 & #2\\ #3 & #4 \end{array}\right)}
\newcommand{\mattwo}[2] {\left(\begin{array}{cc}#1\\ #2 \end{array}\right)}
\newcommand{\pphi}{\varphi}
\newcommand{\eqmod}[1]{\overset{\bmod #1}{\equiv}}
\renewcommand{\O}{\mathrm{O}}
\renewcommand{\subset}{\subseteq}
\newcommand{\fr}[1]{\mathfrak{#1}}
\newcommand{\p}{\mathfrak{p}}
\newcommand{\m}{\mathfrak{m}}
\newcommand{\supp}{\mathrm{supp}}
\newcommand{\limto}[1]{\xrightarrow[#1]{}}
\newcommand{\sqf}{ \ \square\text{ - free}}

\newcommand{\under}[2]{\mathrel{\mathop{#2}\limits_{#1}}}

\newcommand{\underwithbrace}[2]{  \makebox[0pt][l]{$\smash{\underbrace{\phantom{%
    \begin{matrix}#2\end{matrix}}}_{\text{$#1$}}}$}#2}

\title{\sc Murmurations}
\maketitle
\begin{abstract}
We establish the first case of the surprising correlation phenomenon observed in the recent works of He, Lee, Oliver, Pozdnyakov, and Sutherland between Fourier coefficients in families of modular forms and their root numbers. We give a complete description of the resulting correlation functions for holomorphic modular forms of any fixed weight $k$ and examine the asymptotic properties of these functions.
\end{abstract}

\section{Introduction}
In a recent paper, He, Lee, Oliver, and Pozdnyakov (\cite{murms}) discovered a remarkable oscillation pattern in the averages of Frobenius traces of elliptic curves of fixed rank and conductor in a bounded interval. This discovery stemmed from the use of machine learning and computational techniques and did not explain the mathematical source of this phenomenon, referred to as "murmurations" due to its visual similarity to bird flight patterns: \\
\vspace{-0.1in} 

\begin{figure}[H]
  \begin{subfigure}[b]{3.2 in}
    \centering
    \includegraphics[width=3.2in]{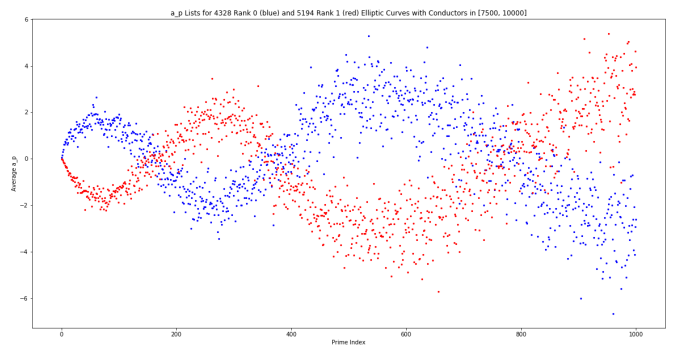}
 \caption{\footnotesize{Rank bias observed in \cite{murms}. With authors' permission.}}
  \end{subfigure}
  \hspace{0.11in}
  \begin{subfigure}[b]{3.3in}
    \centering
\vspace{-1in}
    \includegraphics[width=3.3 in]{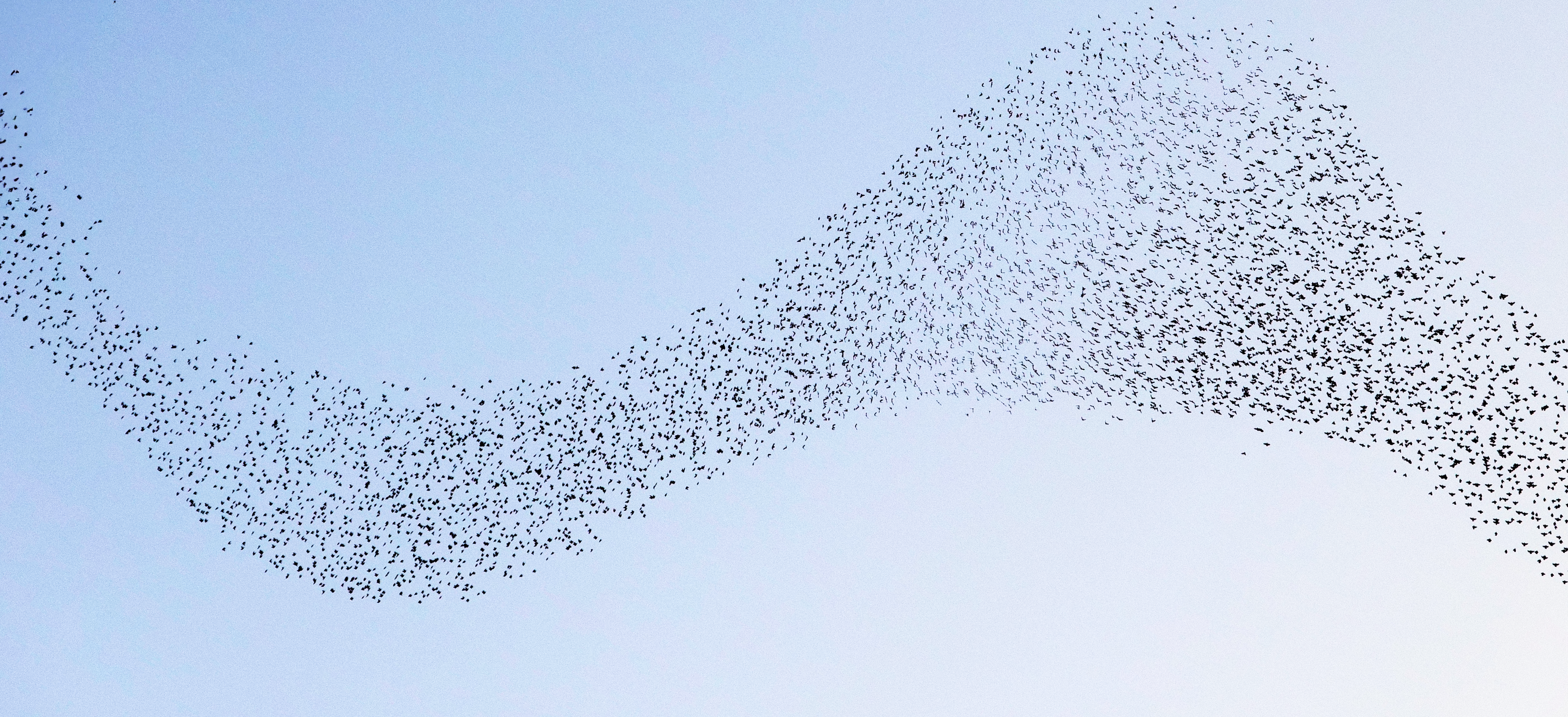}
  \caption{  \footnotesize{ Starlings murmurations.} \scriptsize{  AlbertoGonzalez/Shutterstock.com}}
  \end{subfigure}
\label{elcurves}
\end{figure}
\vspace{-0.1in}

Later, Sutherland and the authors (\cite{rubenst}, \cite{nextmurms}) detected this bias in more general families of arithmetic $L$ functions, for instance, those associated to weight $k$ holomorphic modular cusp forms for $\Gamma_0(N)$ with conductor in a geometric interval range $[N, cN]$ and a fixed root number. Sutherland made a striking observation that the average of $a_f(P)$ over this family for a single prime $P \sim N$ converges a continuous-looking function of $P/N$:\\

\begin{figure}[H]
  \centering
\includegraphics[width = \textwidth]{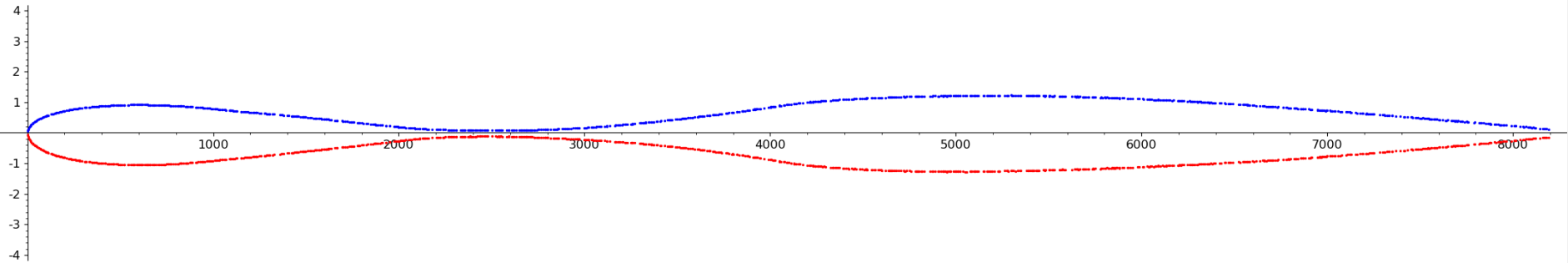}
  \caption{Averages of $a(p)$ for forms with a fixed root number over levels in $[2^{13}, 2^{14}]$, courtesy of Sutherland.}
\label{dr}
\end{figure}

The goal of this paper is to establish this bias in families of modular forms of square-free level with arbitrary fixed weight and root number. We show the following:

\begin{thm}\label{mainthm}
Let $H^{\text{new}}(N, k)$ be a Hecke basis for trivial character weight $k$ cusp newforms for $\Gamma_0(N)$ with $f \in H^{\text{new}}(N, k)$ normalized to have lead coefficient $1$. Let $\eps(f) \in \{\pm 1\}$ denote the root number of $f$, let $a_f(p)$ be the $p$-th Fourier coefficient of $f$, and let $\lambda_f(p) := a_f(p)/p^{(k-1)/2}$. Let $X, Y,$ and $P$ be parameters going to infinity with $X, Y \in \R_+$ and $P$ prime; assume further that $Y = (1 + o(1)) X^{1 - \delta_2}$, and $P \ll X^{1 + \delta_1}$ for some $\delta_1, \delta_2$ with $0 < \delta_1 < 1/11, \ \  2 \delta_1 < \delta_2 < 1/13 (4 - 18 \delta_1)$. Let $y := P/X$.  Then:

\begin{multline*}
 \frac{\sum^\square_{\substack{ N \in [X, X + Y] }}\sum_{f \in H^{\text{new}}(N, k)} \sqrt{P} \lambda_f(P)    \eps(f) }{ \sum^\square _{\substack{ N \in [X, X + Y]  }} \sum_{f \in H^{\text{new}}(N, k)} 1}   = \\ \\
  \frac{\alpha (-1)^{k/2 - 1}}{k - 1} \sum_{1 \leq r \leq 2 \sqrt{y}} \nu(r) \sqrt{4 y - r^2} U_{k - 2}\left(\frac{r}{2 \sqrt{y}} \right)  +  \frac{\beta}{k - 1} \sqrt{y}  - \gamma \delta_{k = 2}  y \\ + \O_\eps\left(  X^{-\delta' + \eps }  + \frac{1}{P}\right), 
\end{multline*}
where $\delta'>0$ is a constant exlpicitly expressible through $\delta_1, \delta_2$  \footnote{We remark that the exponents in the statement are far from optimal, as our goal here is only to get a power saving error term in a range up to $X^a$ for some $a > 1$.} , $U_{k - 2}$ is the Chebyshev polynomial given by 
\[U_n(\cos \theta) := \frac{\sin ((n + 1) \theta)}{\sin \theta},\] 
\begin{center}
\[\alpha = 2 \pi \prod_p \frac{1 - p - 2 p^2 + p^4}{p^4 - 2 p^2 + p}, \beta = 2 \pi  \prod_p \frac{-1 + p^2 + p^3}{p (-1 + p + p^2)}, \gamma =12 \prod_p \frac{p (1 + p)}{-1 + p + p^2},\] and
\[ \nu(r):=  \prod_{p | r}\left( 1 + \frac{p^2}{p^4 - 2 p^2 - p + 1}\right),\]
\end{center} and $\sum^\square$ denotes a sum over square-free parameters\footnote{The restriction to square-free levels is a technical one, as the trace formula simplifies greatly when the level is square-free. From the computations of Sutherland, it appears that the resulting density functions are slightly different when one considers all levels, but they share key properties with the ones above.}
\end{thm}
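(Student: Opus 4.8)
\emph{Overall approach.} The plan is to recognise the numerator as (essentially) the trace of a Hecke operator twisted by the Fricke involution on a space of cusp forms, to evaluate this trace by an Eichler--Selberg trace formula, and then to carry out the resulting sum over the level $N$ by analytic number theory. For a newform $f\in H^{\mathrm{new}}(N,k)$ of square-free level and a prime $p\mid N$ one has $a_f(p)=-w_p(f)p^{k/2-1}$ with $w_p(f)\in\{\pm1\}$ the Atkin--Lehner eigenvalue, hence $\sqrt p\,\lambda_f(p)=-w_p(f)$ and $\eps(f)=i^{k}\prod_{p\mid N}w_p(f)=i^{k}\eta_N(f)$, where $\eta_N(f)$ is the eigenvalue of the Fricke involution $W_N=\prod_{p\mid N}W_p$. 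Taking the automorphic normalisation of $T_P$ (so $T_Pf=\sqrt P\,\lambda_f(P)f$) gives, for $P\nmid N$,
\[
\sum_{f\in H^{\mathrm{new}}(N,k)}\sqrt P\,\lambda_f(P)\,\eps(f)=i^{k}\operatorname{Tr}\!\big(T_PW_N\mid S_k^{\mathrm{new}}(N)\big);
\]
and since $W_p$ acts with trace $0$ on every two-dimensional $p$-old plane, the oldforms do not contribute, so this equals $i^{k}\operatorname{Tr}(T_PW_N\mid S_k(\Gamma_0(N)))$. The $O(1)$ levels in $[X,X+Y]$ with $P\mid N$ are bounded trivially and thrown into the error.

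\emph{The Atkin--Lehner trace formula.} Next I apply the Eichler--Selberg trace formula with a Fricke twist to $\operatorname{Tr}(T_PW_N\mid S_k(\Gamma_0(N)))$. The operator $T_PW_N$ is given by integral matrices of determinant $PN$; since $P$ is prime and $P\nmid N$, $PN$ is never a perfect square, so the scalar and split--hyperbolic terms drop out. In the elliptic term the Atkin--Lehner condition at each $p\mid N$ forces the trace $t$ of the relevant matrix to be divisible by $p$ — this is already visible in the coset representatives of $T_PW_N$, all of which have trace in $N\Z$ — so $t=rN$ with $r^2N<4P$, i.e.\ $|r|\le 2\sqrt{P/N}$. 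Such a $t$ enters with polynomial weight $U_{k-2}\!\big(\tfrac{rN}{2\sqrt{PN}}\big)=U_{k-2}\!\big(\tfrac r2\sqrt{N/P}\big)$ and multiplicity equal to a sum over conductors $g$ of weighted class numbers $h_w\!\big(\tfrac{r^2N^2-4PN}{g^2}\big)$ times local embedding densities $\prod_p\mu_p(r,P,N)$ (for odd $p\mid N$ one has $p\,\|\,(r^2N^2-4PN)$, so the local factor there is uniform). There is in addition a hyperbolic/parabolic contribution from the few rational-eigenvalue classes, controlled by the divisor structure of $PN$, and, only when $k=2$, an extra Eisenstein/residual term.

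\emph{Summation over the level.} Summing over square-free $N\in[X,X+Y]$: with $y=P/X$, the hypotheses $Y=(1+o(1))X^{1-\delta_2}$ and $P\ll X^{1+\delta_1}$ give $P/N=y(1+O(X^{-\delta_2}))$, hence $U_{k-2}(\tfrac r2\sqrt{N/P})=U_{k-2}(\tfrac r{2\sqrt y})+O_k(X^{\delta_1-\delta_2})$ uniformly; symmetrising in $r\mapsto-r$ (using that $U_{k-2}$ is even and $h_w$ depends only on $t^2$) turns the elliptic sum into an $r=0$ term plus twice a sum over $1\le r\le 2\sqrt y$. Inserting the class number formula $h_w(D)=\tfrac1\pi\sqrt{|D|}\,L(1,\chi_D)$ and using $\sqrt{|r^2N^2-4PN|}=N\sqrt{4P/N-r^2}\approx N\sqrt{4y-r^2}$, the leading part of the $r$-term is $\tfrac1\pi N\sqrt{4y-r^2}\,U_{k-2}(\tfrac r{2\sqrt y})$ times local densities and an $L(1,\cdot)$-value. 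Summing $N$ times these arithmetic data over $N\in[X,X+Y]$ should produce the stated Euler products: the $r\ge1$ elliptic terms assemble into $\tfrac{\alpha(-1)^{k/2-1}}{k-1}\sum_{1\le r\le2\sqrt y}\nu(r)\sqrt{4y-r^2}\,U_{k-2}(\tfrac r{2\sqrt y})$, with the extra local factor at $p\mid r$ producing $\nu(r)$ and the remaining primes producing $\alpha$; the $r=0$ elliptic term together with the hyperbolic/parabolic terms give $\tfrac{\beta}{k-1}\sqrt y$; and the $k=2$ residual term gives $-\gamma\,\delta_{k=2}y$. The denominator is $\sum_{N\in[X,X+Y]}^{\square}\dim S_k^{\mathrm{new}}(N)=\tfrac{k-1}{12}\sum_{N\in[X,X+Y]}^{\square}\phi(N)+(\text{lower order})$, supplying the factor $(k-1)^{-1}$; dividing gives the claimed identity.

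\emph{The main difficulty.} The heart of the argument is to prove, with a power-saving error term and uniformly for $P\ll X^{1+\delta_1}$, an asymptotic for the sums $\sum_{N\in[X,X+Y]}^{\square}h_w(D_N)\cdot(\text{local factors})$ with $D_N=-N(4P-r^2N)$ depending quadratically on $N$. I would expand and truncate $L(1,\chi_{D_N})=\sum_m\chi_{D_N}(m)/m$; by quadratic reciprocity $\chi_{D_N}(m)$ is, as a function of $N$, essentially a quadratic character of modulus $\sim m$ applied to a fixed quadratic polynomial in $N$, so the Weil bound for such character sums — supplemented by the large sieve inequality once one sums over $m$ and the auxiliary moduli — gives the needed cancellation in $N$, provided one also uses equidistribution of square-free integers in short intervals and tracks uniformly the dependence on $m$, $g$, $r$, $N$, $P$. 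Balancing these savings against the length of the Dirichlet series and the short-interval constraint is precisely what forces $0<\delta_1<1/11$ and $2\delta_1<\delta_2<\tfrac1{13}(4-18\delta_1)$, and determines $\delta'$. The remaining technical points — the vanishing of the old-space trace of $W_p$, the evaluation of the local embedding densities $\mu_p$ pinning down $\alpha,\beta,\gamma,\nu(r)$, the $k=2$ residual term, and the degenerate ranges ($P\mid N$, $r$ near $2\sqrt y$ where $\sqrt{4y-r^2}$ is tiny, and discriminants that are nearly perfect squares) — are routine but lengthy.
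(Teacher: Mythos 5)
Your overall strategy coincides with the paper's: interpret the numerator as $\pm\operatorname{Tr}(T_P\circ W_N)$, use the fact that for square-free $N$ the old part does not contribute, apply the Eichler--Selberg/Atkin--Lehner trace formula (the paper quotes the Skoruppa--Zagier correction of Yamauchi's formula rather than rederiving it), convert the resulting Hurwitz class numbers to $L(1,\chi_D)$ via Dirichlet's formula, truncate the Dirichlet series, and use equidistribution of square-free $N$ together with the dimension formula $\dim S_k^{\mathrm{new}}(N)=\tfrac{k-1}{12}\phi(N)+O(N^{\eps})$ for the denominator. Up to that point the proposal is sound.

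The genuine gap is in the central analytic step, the evaluation of $\sum^{\square}_{N\in[X,X+Y]}H_1(r^2N^2-4PN)$ for $r\geq 1$. Because the discriminant is \emph{quadratic} in $N$, the complete sums $\sum_{a\bmod m}\bigl(\tfrac{a}{m}\bigr)\bigl(\tfrac{(r^2a-4P)/d^2}{m}\bigr)$ do \emph{not} vanish for non-principal characters (for squarefree odd $m$ coprime to $2r$ they equal $\mu(m)$ times unit factors), so the main term is not the ``principal-character'' contribution as in the linear case $H_1(-4PN)$: every modulus $n$ in the truncated $L$-series contributes a density $\theta_r(n)/n$ to the main term, and it is the joint Euler product over the Dirichlet-series modulus $n$, the square divisors $d^2\mid r^2N-4P$ (whose admissibility analysis mod $4$ and mod $d^2$ is what makes $\nu(r)$ depend on $\upsilon_2(r)$), and the square-free densities that produces $B\nu(r)$ and ultimately $\alpha,\nu(r)$. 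Your sketch attributes the Euler products to ``local embedding densities'' of the trace formula and then asserts that ``the Weil bound \ldots supplemented by the large sieve \ldots gives the needed cancellation in $N$''; as written this either discards the very terms that build the main term, or leaves the main-term extraction unexplained. Moreover the large sieve does not apply directly here, since $N\mapsto\chi_m(N(r^2N-4P))$ is not a Dirichlet character in $N$; the paper instead handles small moduli by Hooley's equidistribution of square-frees in progressions mod $d^2n$ combined with the exact complete sums $\theta_r$, $\phi^o_{r,d}$, and handles large moduli by Poisson summation followed by an explicit evaluation of the resulting Sali\'e-type sums $\sum_{x}e(nx/p^{\alpha})\chi(x)\chi(x+1)\ll p^{\alpha-1/2}$. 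Without an argument of this kind (or a genuine substitute), the asymptotic with a power-saving error, and hence the admissible range of $(\delta_1,\delta_2)$, is not established.
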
   
We define
\[\mathcal{M}_k(y) :=  \frac{\alpha (-1)^{k/2 - 1}}{k - 1} \sum_{1 \leq r \leq 2 \sqrt{y}} \nu(r) \sqrt{4 y - r^2} U_{k - 2}\left(\frac{r}{2 \sqrt{y}} \right)  +  \frac{\beta}{k - 1} \sqrt{y}  - \gamma \delta_{k = 2}  y \]
to be the weight $k$ murmuration density.  

\newpage 

\begin{figure}[H]
  \centering
\includegraphics[width = \textwidth]{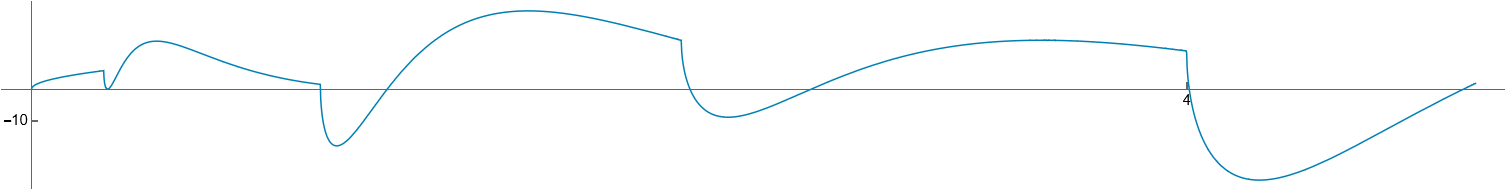}
\end{figure}

\begin{figure}[H]
  \centering
\includegraphics[width = \textwidth]{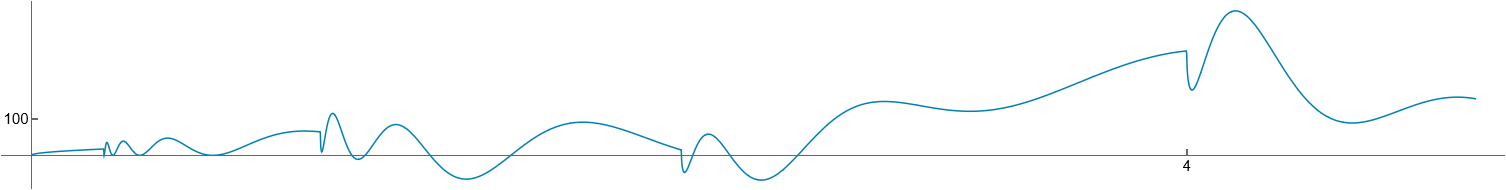}
  \caption{Plots of $\mathcal{M}_8$ and $\mathcal{M}_{24}$ near the origin.}
\label{m824} 
\end{figure}

The formula above arises from an application of the Eichler-Selberg trace formula to the composition of Hecke and Atkin-Lehner operators, which allows us to reinterpret the sum in terms of class numbers. We then compute class number averages in short intervals by means of the class number formula.
 
Integrating the murmuration density above produces the dyadic interval weight $k$ averages observed by Sutherland (Figure \ref*{dr}):

\begin{thm}\label{thm2}
Let $P \ll X^{6/5}$, let $c > 1$ be a constant, and let $y:= P/X$. Then as $X \to \infty,$

\[\frac{\sum^\square_{\substack{ N \in [X, cX] }}\sum_{f \in H^{\text{new}}(N, k)}  \sqrt{P}\lambda_f(P)   \eps(f)}{\sum^\square_{\substack{ N \in [X, cX]}}\sum_{f \in H^{\text{new}}(N, k)}  1}  = \frac{2}{(c^2 - 1)} \int_1^c u \mathcal{M}_k(y/u) d u + o_y(1),  \]
where $\mathcal{M}_k(y)$ is as in Theorem \ref*{mainthm}. 
In particular, for $k = c = 2$, the dyadic average
\[\frac{\sum^\square_{\substack{ N \in [X, 2X] }}\sum_{f \in H^{\text{new}}(N, 1)} a_f(P) \eps(f)}{\sum^\square_{\substack{ N \in [X, 2X]}}\sum_{f \in H^{\text{new}}(N, k)}  1}\]
converges to 
\[ \begin{cases} a \sqrt{y}  -b y & \text{on }[0, 1/4], \\   
a \sqrt{y}  - b y  + c \pi y^2  - c (1 - 2y) \sqrt{y - 1/4} - 2c y^2 \arcsin(1/2y - 1) ) & \text{on } [1/4, 1/2], \\ 
a \sqrt{y}  - b y  +   2c y^2 (\arcsin(1/y - 1) - \arcsin(1/2y - 1))\\
\hspace{0.76in} - c(1 - 2y) \sqrt{y - 1/4} + 2 c (1 - y) \sqrt{2y - 1} & \text{on } [1/2, 1], \end{cases} \]
where
$$a \approx 6.38936, b  \approx 11.3536, \text{ and }c \approx 2.6436$$ are explicit constants.
\end{thm}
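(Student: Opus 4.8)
The plan is to deduce Theorem~\ref{thm2} from Theorem~\ref{mainthm} by slicing the dyadic window $[X,cX]$ into short intervals, recognizing the resulting quotient as a Riemann sum for $\frac{2}{c^2-1}\int_1^c u\,\mathcal{M}_k(y/u)\,du$, and then to derive the closed form for $k=c=2$ by an explicit integration.

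I would first fix a small admissible $\eta>0$ — small enough that, taking $\delta_1$ arbitrarily close to $0$ (legitimate since $P\asymp X$ for bounded $y$), the pair $(\delta_1,\delta_2)=(\delta_1,\eta)$ meets the hypotheses of Theorem~\ref{mainthm} — and partition $[X,cX]$ into $\asymp X^{\eta}$ consecutive blocks $[X_j,X_{j+1}]$, each of length $(1+o(1))X_j^{1-\eta}$. Applying Theorem~\ref{mainthm} to the block $[X_j,X_{j+1}]$, with $y_j:=P/X_j$ in place of $y$, yields
\[
\sum^\square_{N\in[X_j,X_{j+1}]}\sum_f\sqrt P\,\lambda_f(P)\,\eps(f)=\bigl(\mathcal{M}_k(P/X_j)+o_y(1)\bigr)D_j,\qquad D_j:=\sum^\square_{N\in[X_j,X_{j+1}]}\sum_f 1 .
\]
Summing over $j$ and dividing by $D:=\sum_j D_j$, the per-block errors add up to $o_y(1)$, so the left side of Theorem~\ref{thm2} equals $\bigl(\sum_j\mathcal{M}_k(P/X_j)D_j\bigr)/D+o_y(1)$.

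Next I would feed in the standard count $D_j=(C_k+o(1))\tfrac12\bigl(X_{j+1}^2-X_j^2\bigr)$ for a positive constant $C_k$ coming from the dimension formula for $S_k^{\mathrm{new}}(\Gamma_0(N))$ summed over square-free levels; its exact value does not matter, as it cancels in the ratio. Since $\mathcal{M}_k$ is continuous on $(0,\infty)$ — at a threshold $y=r^2/4$ the new summand vanishes like $\sqrt{4y-r^2}$ while $U_{k-2}(r/2\sqrt y)\to k-1$ — and $P/X_j$ changes by $O(yX^{-\eta})=o(1)$ across each block, the quantity $\bigl(\sum_j\mathcal{M}_k(P/X_j)D_j\bigr)/D$ is a Riemann sum that converges to $\bigl(\int_X^{cX}\mathcal{M}_k(P/t)\,t\,dt\bigr)\big/\bigl(\int_X^{cX}t\,dt\bigr)$; the substitution $t=uX$ rewrites this as $\tfrac{2}{c^2-1}\int_1^c u\,\mathcal{M}_k(y/u)\,du$. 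The main obstacle is uniformity: one must verify that the $o(1)$ in Theorem~\ref{mainthm} survives summation over the (growing number of) blocks and that its $y$-dependence is controlled, which is exactly what forces the admissible range $P\ll X^{6/5}$.

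For the closed form when $k=c=2$ one has $U_0\equiv 1$, so $\mathcal{M}_2(y)=\alpha\sum_{1\le r\le 2\sqrt y}\nu(r)\sqrt{4y-r^2}+\beta\sqrt y-\gamma y$, and I would evaluate $\tfrac23\int_1^2 u\,\mathcal{M}_2(y/u)\,du$ term by term. The $\beta$ and $\gamma$ terms integrate immediately to $a\sqrt y$ and $-by$ with $a=\tfrac49(2\sqrt2-1)\beta$ and $b=\tfrac23\gamma$. In the $\alpha$ term, $u\sqrt{4y/u-r^2}=\sqrt{4yu-r^2u^2}$, and on $u\in[1,2]$ the summand $r=1$ (with $\nu(1)=1$) appears precisely when $u\le 4y$, while $r=2$ (which would need $u\le y$) never appears; hence only $r=1$ contributes, over $u\in[1,\min(2,4y)]$, and this splits the range of $y$ into $[0,\tfrac14]$, $[\tfrac14,\tfrac12]$, $[\tfrac12,1]$. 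On each piece $\int\sqrt{4yu-u^2}\,du$ is computed by completing the square $4yu-u^2=4y^2-(u-2y)^2$ and using $\int\sqrt{A^2-v^2}\,dv=\tfrac v2\sqrt{A^2-v^2}+\tfrac{A^2}{2}\arcsin(v/A)$; with $c=\tfrac23\alpha$, collecting terms yields the stated three-part formula, and evaluating the Euler products for $\alpha,\beta,\gamma$ numerically gives $a\approx 6.38936$, $b\approx 11.3536$, $c\approx 2.6436$.
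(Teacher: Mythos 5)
Your proposal is correct and follows essentially the same route as the paper: partition $[X,cX]$ into short blocks where Theorem \ref{mainthm} applies, pass from the resulting Riemann sum to $\frac{2}{c^2-1}\int_1^c u\,\mathcal{M}_k(y/u)\,du$ using the dimension-formula weights, and for $k=c=2$ observe that only the $r=1$ term survives on $u\in[1,\min(2,4y)]$ and integrate by completing the square. The identifications $a=\tfrac49(2\sqrt2-1)\beta$, $b=\tfrac23\gamma$, $c=\tfrac23\alpha$ and the three-case split at $y=\tfrac14,\tfrac12$ all match the paper's computation.
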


\begin{figure}[H]
  \centering
\includegraphics[width = \textwidth]{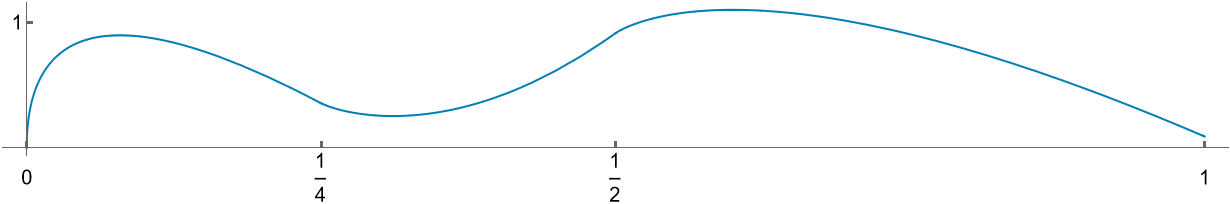}
  \caption{Plot of the piecewise defined function from Theorem \ref*{thm2}.}
\end{figure}

The murmuration densities $\mathcal{M}_k(y)$ in Theorem \ref*{mainthm} have many interesting features. They are oscillating, continuous, with derivative discontinuities at $n^2/4$ for $n \in \N$. At the origin, $\mathcal{M}_k(y)$s have a growth rate of $\sqrt{y}$ and are all positive. This positive root number bias for small $P$ has been observed previously in the works of Martin and Pharis (see \cite{kimball}, \cite{kimball1}, \cite{kimball2}). 

We analyze the behavior these functions as $y \to \infty$ in more detail. At infinity, it has a growth rate of $y^{1/4}$. The function $\mathcal{M}_k(y)/y^{1/4}$ is an asymptotically uniformly almost periodic function in $\sqrt{y}$; it is an absolutely convergent sum of periodic functions with growing (half-integer) periods. Furthermore, up to an $O(1)$ error term, all $\mathcal{M}_k(y)'s$ are given by the same function, with the sign changing depending on the pairity of $k/2$. All these features are captured by the following result:

\begin{thm}\label{tm3}
For any even $k > 0$, the weight $k$ murmuration density function is given by 
\[\mathcal{M}_k(y) =    \alpha \sqrt{y}  \sum_{d, s \in \N} Q(d)   \frac{J_{k - 1}(4 \pi s\sqrt{y}/d)}{s},\] where 
\[Q(d):= \mu^2(d) \prod_{p | d} \frac{p^2}{p^4 - 2 p^2 - p + 1} \asymp \mu^2(d)/d^2. \]  Asymptotically as $y \to \infty$,
\[\mathcal{M}_k(y) = y^{1/4}   \left((-1)^{k/2 - 1} \sqrt{2/\pi} \alpha \right) \sum_{d, s \in \N} \frac{Q(d) \sqrt{d}}{s^{3/2}} \cos \left(\frac{4 \pi s\sqrt{y}}{d} - \frac{3 \pi}{4} \right) + O(1).\] We note that the inner sum $\sum_{d, s \in \N} \frac{Q(d) \sqrt{d}}{s^{3/2}} \cos \left(\frac{4 \pi s\sqrt{y}}{d} - \frac{3 \pi}{4} \right)$ is absolutely convergent and uniformly bounded.
\end{thm}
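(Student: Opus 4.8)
The plan is to start from the closed form for $\mathcal M_k(y)$ given in Theorem \ref*{mainthm} and recognize the $r$-sum as a truncated expansion of a Bessel function. Recall the classical identity expressing the Chebyshev/Bessel connection: for the generating function of $U_n$ one has, after a change of variables $r = 2\sqrt{y}\cos\theta$, that the finite sum
\[
\sum_{1 \le r \le 2\sqrt{y}} \sqrt{4y - r^2}\, U_{k-2}\!\left(\frac{r}{2\sqrt{y}}\right) e(\text{linear in } r)
\]
is, up to normalization, a Riemann-sum/Poisson-summation avatar of an integral $\int_0^\pi \sin^2\theta\, U_{k-2}(\cos\theta)\, e(2\sqrt{y}\,t\cos\theta)\,d\theta$, and such integrals evaluate in closed form to Bessel functions $J_{k-1}$ by the standard formula $\int_0^\pi \sin\theta\, \sin(n\theta)\, e^{ix\cos\theta} d\theta \propto J_n(x)/x$ (Gegenbauer's formula). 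So first I would apply Poisson summation in $r$ to the arithmetic weight $\nu(r)$: write $\nu(r) = \sum_{d \mid r} Q(d)$-type multiplicative decomposition, i.e. expand $\nu(r) = \sum_{d} (\text{mult. coeff.}) \cdot [d \mid r]$, matching the Euler product defining $\nu$ with the Euler product defining $Q$ so that $\sum_{d \mid r}$ of the $Q$-weights reproduces $\nu(r)$. This converts $\sum_r \nu(r)(\cdots) = \sum_d Q(d) \sum_{d \mid r}(\cdots) = \sum_d Q(d)\sum_{m} (\cdots)\big|_{r = dm}$.

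Next, for fixed $d$, I would treat $\sum_{m \ge 1}\sqrt{4y - (dm)^2}\,U_{k-2}\!\big(\tfrac{dm}{2\sqrt y}\big)$ (the sum running over $1 \le m \le 2\sqrt y/d$) by Poisson summation in $m$: the Fourier transform of $t \mapsto \mathbf 1_{|t|\le 2\sqrt y/d}\sqrt{4y - d^2 t^2}\, U_{k-2}(dt/2\sqrt y)$ evaluated at frequency $s$ is exactly a Gegenbauer-type integral of the shape $\int_{-1}^1 \sqrt{1-u^2}\,U_{k-2}(u)\, e(2 s \sqrt y\, u / d)\,du$, which by the integral representation $J_\nu(x) = \frac{(x/2)^\nu}{\sqrt\pi\,\Gamma(\nu+1/2)}\int_{-1}^1 (1-u^2)^{\nu-1/2} e^{ixu}\,du$ (together with the relation $U_{k-2}(u)(1-u^2) = $ a Gegenbauer polynomial $C^{(1)}_{k-2}$, hence the weight $(1-u^2)^{1 - 1/2}$ with an extra polynomial absorbed by the recursion raising $\nu$) collapses to a constant multiple of $J_{k-1}(4\pi s\sqrt y / d)/s$. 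Collecting constants — the $\pi$, the $1/(k-1)$, the $(-1)^{k/2-1}$, and the leading $\alpha$ — and checking that the $s = 0$ term of Poisson summation reproduces exactly the $\frac{\beta}{k-1}\sqrt y - \gamma\delta_{k=2} y$ main terms (this is where the Euler products for $\beta$ and $\gamma$ must come out right, and where the $k=2$ anomaly, i.e. the non-decay of $U_0 \equiv 1$ at the endpoint, produces the extra $y$ term), yields the first displayed formula $\mathcal M_k(y) = \alpha\sqrt y \sum_{d,s} Q(d) J_{k-1}(4\pi s\sqrt y/d)/s$.

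For the asymptotic statement, I would substitute the classical large-argument expansion $J_{k-1}(x) = \sqrt{2/(\pi x)}\,\cos\!\big(x - \tfrac{(k-1)\pi}{2} - \tfrac{\pi}{4}\big) + O(x^{-3/2})$ into each term with $x = 4\pi s\sqrt y/d$. The leading term contributes $\sqrt y \cdot \sqrt{d/(s\sqrt y)} \asymp y^{1/4}\sqrt d/\sqrt s$ times a cosine, and since $\cos(\theta - \tfrac{(k-1)\pi}{2} - \tfrac\pi4) = (-1)^{k/2-1}\cos(\theta - \tfrac{3\pi}{4})$ for even $k$ (using $(k-1)\pi/2 = k\pi/2 - \pi/2$ and $e^{-ik\pi/2} = (-1)^{k/2}$, i.e. $i^{-k} = (-1)^{k/2}$), the phase $-3\pi/4$ and the sign $(-1)^{k/2-1}$ pop out uniformly in $k$, giving exactly the claimed main term $y^{1/4}(-1)^{k/2-1}\sqrt{2/\pi}\,\alpha \sum_{d,s} Q(d)\sqrt d\, s^{-3/2}\cos(4\pi s\sqrt y/d - 3\pi/4)$. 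The error terms $O(x^{-3/2})$ sum to $\sqrt y \sum_{d,s} Q(d) (d/s)^{3/2} y^{-3/4} = O(y^{-1/4}) \cdot \sum Q(d) d^{3/2}\sum s^{-3/2} = O(1)$, using $Q(d) \asymp \mu^2(d)/d^2$ so that $\sum Q(d) d^{3/2} < \infty$ and $\sum s^{-3/2} < \infty$; the same two bounds give absolute convergence and uniform boundedness of the trigonometric sum.

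The main obstacle I anticipate is bookkeeping rather than conceptual: getting the Gegenbauer integral evaluation to produce $J_{k-1}$ with precisely the constants that reconcile the two expressions for $\mathcal M_k$ (in particular matching the infinite products $\alpha, \beta, \gamma$ with the Euler product defining $Q$ via the Poisson $s = 0$ term), and handling the endpoint/boundary contributions of the Poisson summation — the function $\sqrt{4y - r^2}\,U_{k-2}$ vanishes like a square root at $r = 2\sqrt y$, so it is continuous but not smooth, and one must verify this limited smoothness still gives enough decay in $s$ for the interchange of summation and for absolute convergence; the special role of $k = 2$ (where $U_0 \equiv 1$ and the density picks up the extra $-\gamma y$ term) needs separate care in identifying which Poisson frequency it comes from. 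A secondary technical point is justifying convergence of all the rearrangements (the $d$-sum, the $m \to$ Poisson $s$-sum, and their interchange) simultaneously, which again rests on the $Q(d) \asymp \mu^2(d)/d^2$ bound stated in the theorem.
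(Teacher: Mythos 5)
Your overall route is the same as the paper's: expand $\nu(r)=\sum_{d\mid r}Q(d)$ (by matching Euler products), apply Poisson summation in the inner variable $m$ with $r=dm$ to the compactly supported function $\sqrt{4y-d^2m^2}\,U_{k-2}(dm/2\sqrt y)$, identify its Fourier transform via the Gegenbauer integral as a multiple of $J_{k-1}(4\pi s\sqrt y/d)/s$, and absorb the $\beta\sqrt y/(k-1)$ and $\gamma\delta_{k=2}y$ terms into the zero-frequency bookkeeping (the $m=0$ term of the original sum accounts for $\beta$ via $\sum_d Q(d)=\beta/\alpha$, and the $s=0$ term of the dual sum, nonzero only for $k=2$ since $J_{k-1}(x)/x\to 0$ for $k\ge 3$, cancels $\gamma\delta_{k=2}y$ via $(\alpha/\gamma)\sum_d Q(d)/d=1/\pi$). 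Your phase computation $\cos(\theta-(k-1)\pi/2-\pi/4)=(-1)^{k/2-1}\cos(\theta-3\pi/4)$ is also correct, and your concern about the square-root endpoint singularity is resolved exactly as you suggest, by the $|s|^{-3/2}$ decay of the transform.

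There is, however, a genuine gap in your error analysis for the $O(1)$ remainder. You bound the Bessel remainder uniformly by $O(x^{-3/2})$ with $x=4\pi s\sqrt y/d$ and claim the resulting sum is controlled by $\sum_d Q(d)d^{3/2}\sum_s s^{-3/2}$, asserting $\sum_d Q(d)d^{3/2}<\infty$. But $Q(d)\asymp\mu^2(d)/d^2$ gives $Q(d)d^{3/2}\asymp\mu^2(d)d^{-1/2}$, whose sum over $d$ diverges. The uniform large-argument expansion is simply not valid (or not useful) when $d\gg s\sqrt y$, i.e.\ when the argument of $J_{k-1}$ is small. The fix is to use $J_{k-1}(z)=(-1)^{k/2-1}\sqrt{2/(\pi z)}\cos(z-3\pi/4)+O(\min\{z^{-3/2},1\})$ and split the $d$-range at $d\asymp s\sqrt y$: the contribution of $d\ll s\sqrt y$ is $\ll\sum_s s^{-1}\cdot y^{-1/4}s^{-3/2}\sum_{d\ll s\sqrt y}d^{-1/2}\ll\sum_s s^{-2}\ll 1$, and the contribution of $d\gg s\sqrt y$ (bounding the whole term trivially) is $\ll\sum_s s^{-1}\sqrt y\sum_{d\gg s\sqrt y}d^{-2}\ll\sum_s s^{-2}\ll 1$. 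This is exactly how the paper closes the estimate; without the split your $O(1)$ claim does not follow. (Your bound for the main trigonometric sum itself, $\sum_d Q(d)\sqrt d\,\sum_s s^{-3/2}<\infty$, is fine.)
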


\begin{figure}[H]
  \centering
\includegraphics[width =  \textwidth]{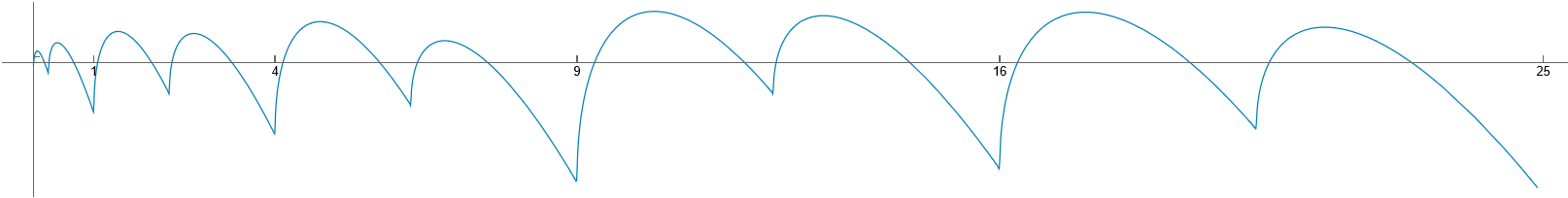}
  \caption{The universal limit function for $k = 2 \pmod 4$ from Theorem \ref*{tm3} near the origin.}
\includegraphics[width =  \textwidth]{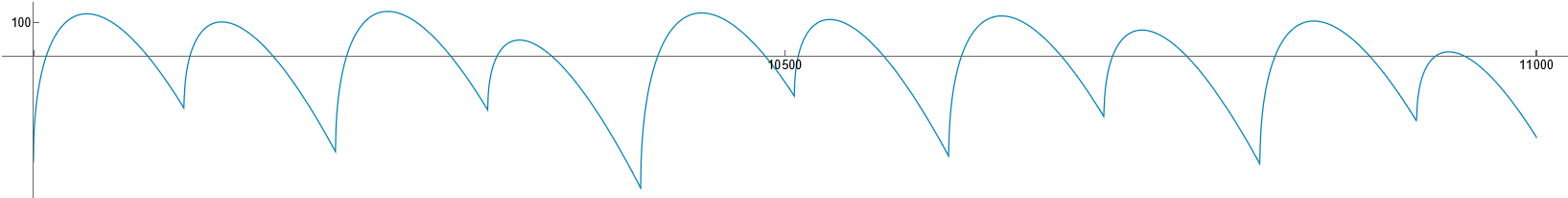}
  \caption{The graph of $-\mathcal{M}_8(y)$ in the range $y \in (10000, 11000)$ (see Figure \ref*{m824} for comparison).}
\end{figure}

From this reformulation, we can deduce computationally that the functions' sign, i.e., the sign of the correlation bias, changes infinitely often:

\begin{cor}\label{cor1}
For every even $k>0$, there exists a $y_0$ such that for all $y > y_0$, the function $\mathcal{M}_k(y^2)$ changes sign on every interval of length $1$.
\end{cor}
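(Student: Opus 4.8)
The plan is to use the asymptotic expansion of $\mathcal{M}_k(y)$ from Theorem~\ref{tm3}, namely that, writing $t = \sqrt{y}$, we have
\[
\mathcal{M}_k(t^2) = t^{1/2}\, c_k \, F(t) + O(1), \qquad F(t) := \sum_{d, s \in \N} \frac{Q(d)\sqrt{d}}{s^{3/2}} \cos\!\left(\frac{4\pi s t}{d} - \frac{3\pi}{4}\right),
\]
where $c_k = (-1)^{k/2-1}\sqrt{2/\pi}\,\alpha \neq 0$ is a nonzero constant independent of $t$, and $F$ is a bounded, uniformly almost periodic function of $t$. Since $t^{1/2} \to \infty$ while the $O(1)$ term stays bounded, it suffices to show that $F$ has infinitely many sign changes on every unit interval once $t$ is large; more precisely, to show there exists $\eta > 0$ such that every interval of length $1$ contains a point where $F \geq \eta$ and a point where $F \leq -\eta$. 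Indeed, once $t^{1/2}c_k\eta$ dominates the implied constant of the $O(1)$ term, such points force $\mathcal{M}_k(t^2)$ to take both signs on that interval, hence to change sign there.

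The first step is to isolate the dominant frequency. The $d = s = 1$ term of $F$ is $Q(1)\cos(4\pi t - 3\pi/4) = \cos(4\pi t - 3\pi/4)$, a pure cosine of period $1/2$, which on every unit interval attains values close to $+1$ and $-1$. The remaining terms form a tail $\widetilde F(t) = \sum_{(d,s)\neq(1,1)} \frac{Q(d)\sqrt d}{s^{3/2}}\cos(\cdots)$, and the key computation is to bound $\|\widetilde F\|_\infty$. Using $Q(d) \asymp \mu^2(d)/d^2$ (the estimate recorded in Theorem~\ref{tm3}), the sup-norm of the tail is at most $\sum_{(d,s)\neq(1,1)} \frac{Q(d)\sqrt d}{s^{3/2}} \ll \sum_{d\geq 1}\frac{\mu^2(d)}{d^{3/2}}\sum_{s\geq 1}\frac{1}{s^{3/2}} - 1 = \zeta(3/2)\cdot\frac{\zeta(3/2)}{\zeta(3)} - 1$, which is a finite number but, crucially, is \emph{larger} than $1$ — so a crude triangle-inequality bound on the whole tail does not by itself beat the leading cosine. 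This is the main obstacle: one cannot simply dominate all other harmonics by the $d=s=1$ term.

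To get around this, I would not discard the tail but rather evaluate $F$ at cleverly chosen points. The cleanest route is to observe that $F$ is a uniformly almost periodic function whose mean value is $0$ (the constant term is absent) and whose frequencies $\{4\pi s/d\}$ include $4\pi$; a general principle for such functions is that the "amplitude at frequency $4\pi$", i.e. $\limsup_{T\to\infty}\frac1T\int_0^T F(t)e^{-4\pi i t}\,dt$, equals the sum of the coefficients of all harmonics with frequency exactly $4\pi$, which here is $\sum_{d \mid \text{?}}$ — only $(d,s)=(1,1)$ contributes (since $s/d = 1$ with $\gcd$-reduced form forces $d=s=1$ among squarefree $d$... one must be slightly careful, as $s/d=1$ also from e.g.\ $d=s=2$, but $Q(2)\neq 0$ only if $2$ squarefree, which it is, so in fact frequency $4\pi$ receives contributions from all $(d,d)$ with $d$ squarefree, total $\sum_d Q(d)$). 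Regardless, this "Bohr coefficient at $4\pi$" is a fixed nonzero constant $A := \sum_{d}Q(d) > 0$, hence by the almost-periodicity $F(t)$ must exceed $A/2$ infinitely often and drop below $-A/2$ infinitely often, and — because $F$ has an honest Fourier–Bohr series with a spectral gap near the low frequencies — these oscillations recur within bounded gaps. Making "bounded gaps" into "every unit interval for $t$ large" requires quantifying the almost-periodicity: I would truncate $F$ to the harmonics with $4\pi s/d \leq M$ for a large constant $M$, note the truncation error is $O(1/\sqrt M)$ uniformly in $t$ (again via $Q(d)\asymp\mu^2(d)/d^2$ and $\sum s^{-3/2}$ convergence, with the extra decay from restricting to large $s/d$), and then analyze the trigonometric polynomial $F_M$ directly: its frequencies are rational multiples of $4\pi$ with bounded denominators, so $F_M$ is periodic with period $L_M$ (a fixed integer depending on $M$), and on each full period it attains values $\geq A/2$ and $\leq -A/2$ (since its mean is $0$ and its $4\pi$-Fourier coefficient is $A$, forcing fluctuation of size $\geq A$). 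The final step is a Diophantine/pigeonhole argument: within $F_M$'s period $L_M$ the high points recur, and since $L_M$ is a \emph{fixed} constant, for $t$ in any interval of length $1$...

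\emph{this} is where the real work lies, and honestly the cleanest finish is the computational one the corollary already advertises: verify numerically (or via a short explicit estimate) that $\max_{t\in[0,1/2]} F_M(t) > \|\widetilde F - \widetilde F_M\|_\infty + \|\text{residual harmonics in }[0,1/2]\text{-behaviour}\|$... Let me instead state the finish I would actually write: choose $M$ so that the tail beyond $F_M$ has sup-norm $< A/4$; since $F_M$ is an explicit trigonometric polynomial with period dividing some fixed $L\in\N$ and with $d=1,\dots$ harmonics, one checks by direct (finite, computer-assisted) computation that on every subinterval of $\R$ of length $\tfrac12$ the polynomial $F_M$ takes a value $> A/4$ and a value $< -A/4$ — this reduces to examining $F_M$ on its single period, which is a finite calculation — whence $F = F_M + (F - F_M)$ takes a value $> 0$ and a value $< 0$ on every length-$\tfrac12$ interval, a fortiori on every unit interval. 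Combining with $\mathcal{M}_k(y^2) = \sqrt{y}\,c_k\,F(\sqrt{y^2}) + O(1) = \sqrt{y}\,c_k F(y) + O(1)$ wait — I must track the variable substitution carefully: Corollary~\ref{cor1} concerns $\mathcal{M}_k(y^2)$ as a function of $y$, and Theorem~\ref{tm3} gives $\mathcal{M}_k(Y) = Y^{1/4} c_k F(\sqrt Y) + O(1)$, so with $Y = y^2$ we get $\mathcal{M}_k(y^2) = \sqrt{y}\, c_k\, F(y) + O(1)$, a function of $y$ whose oscillatory part $F(y)$ has period-$\tfrac12$ leading behaviour in $y$ — exactly matching the claim of sign change on every unit interval. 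Once $\sqrt{y}\,|c_k|\,(A/4)$ exceeds the $O(1)$ constant, i.e.\ for $y > y_0$, the sign of $\mathcal{M}_k(y^2)$ follows that of $F(y)$ at the extreme points and hence changes on every interval of length $1$. The main obstacle throughout is that the triangle inequality is too lossy, so the argument must genuinely use cancellation/almost-periodicity (or a finite verification for a suitable truncation) rather than term-by-term domination.
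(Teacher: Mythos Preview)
Your overall strategy---reduce via Theorem~\ref{tm3} to the universal function $F(t)$, truncate to a finite piece, bound the tail uniformly, and verify by finite computation that the truncated piece changes sign on every unit interval---is exactly what the paper does. But your truncation is the wrong one, and it breaks the argument.

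You propose to keep the harmonics with $4\pi s/d \leq M$ and call the result $F_M$ a ``trigonometric polynomial'' that is ``periodic with period $L_M$''. It is neither. The condition $s/d \leq M/(4\pi)$ is satisfied by $(s,d)=(1,d)$ for every squarefree $d$, so $F_M$ still contains infinitely many incommensurable frequencies $4\pi/d$ and is not periodic; there is no single period on which you can do a finite check. (Your tail bound $O(M^{-1/2})$ for the discarded high-frequency part is correct, but that is not the issue.)

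The paper's fix is to truncate on $d$ alone, keeping the full sum over $s$. For each fixed $d$ the inner sum $\sum_{s\geq 1} s^{-3/2}\cos(4\pi s T/d - 3\pi/4)$ is a genuine periodic function of $T$ with period $d/2$, expressible in closed form via polylogarithms, and its sup-norm is the explicit constant $\zeta(3/2)/\sqrt 2$. Restricting to a finite set $\mathcal{D}$ of squarefree $d$'s then gives a periodic function with period $\tfrac12\operatorname{lcm}(\mathcal{D})$, and the tail $\sum_{d\notin\mathcal{D}} Q(d)\sqrt d$ times that sup-norm bounds the truncation error. The paper further chooses $\mathcal{D}$ not as an initial segment but as a set of $d$'s supported on primes $\leq 13$, keeping the lcm (hence the period, $15015$) small while still driving the tail below $0.631$. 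The verification then amounts to checking three inequalities at each of $15015$ integer shifts.

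So the missing idea is precisely \emph{which} truncation makes the remainder periodic: cut on $d$, not on the frequency $s/d$, and sum the $s$-series exactly. Once you make that change your outline becomes the paper's proof. (A minor aside: your Bohr-coefficient digression has a slip---the contribution of $(d,s)=(d,d)$ to the frequency-$4\pi$ coefficient is $Q(d)\sqrt d/d^{3/2}=Q(d)/d$, not $Q(d)$---but since you abandon that route anyway it does not matter.)
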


Finally, we analyze the asymptotic behavior of the smoothed averages from Theorem \ref*{thm2}:

\begin{thm}\label{thm4}
Let $\Phi:(0, \infty) \to \C$ be a compactly supported smooth weight function, and let 
\[\mathcal{M}^k_\Phi(y):=\Big( \int_0^\infty  \mathcal{M}_k(y/u) \Phi(u) u^2 \frac{du}{u}\Big)/\int_0^\infty   \Phi(u) u^2 \frac{du}{u}.\] Then $\mathcal{M}_\Phi$ is continuous on $(0, \infty)$, $\mathcal{M}^k_\Phi(0) = 0$, and as $y \to \infty$,
\[\mathcal{M}^k_\Phi (y) = \frac{1}{2}  + o_k(1).\]
\end{thm}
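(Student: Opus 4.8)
\emph{Plan.} The continuity of $\mathcal{M}^k_\Phi$ on $(0,\infty)$ and the value $\mathcal{M}^k_\Phi(0)=0$ are elementary. Since $\Phi$ is supported in a compact set $K\subset(0,\infty)$ and $\mathcal{M}_k$ is continuous on $[0,\infty)$ with $\mathcal{M}_k(0)=0$ (for $0<y<1/4$ the sum over $r$ in Theorem~\ref{mainthm} is empty and the two remaining terms vanish as $y\to0$), the map $y\mapsto\int_K\mathcal{M}_k(y/u)\Phi(u)u\,du$ is continuous on $(0,\infty)$ by uniform continuity of $\mathcal{M}_k$ on compact intervals, and it tends to $0$ as $y\to0^+$ because $y/u\to0$ uniformly on $K$. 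For the asymptotic as $y\to\infty$ I pass to Mellin transforms. Setting $\psi(u):=\Phi(u)u^{2}$, we have $\mathcal{M}^k_\Phi=(\mathcal{M}_k\star\psi)/\widetilde\psi(0)$ for the multiplicative convolution $(f\star g)(y)=\int_0^\infty f(y/u)g(u)\,\tfrac{du}{u}$, where $\widetilde\psi(w)=\int_0^\infty\Phi(u)u^{2-w}\,\tfrac{du}{u}$ is entire and of rapid decay on vertical lines (and $\widetilde\psi(0)\neq0$, as implicitly assumed in the statement), so the Mellin transform of $\mathcal{M}^k_\Phi$ is $\widetilde{\mathcal{M}}_k(w)\,\widetilde\psi(w)/\widetilde\psi(0)$.

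\emph{The Mellin transform of $\mathcal{M}_k$.} Write the Bessel expansion of Theorem~\ref{tm3} as $\mathcal{M}_k(y)=\alpha\sqrt y\sum_{d,n\ge1}Q(d)\,J_{k-1}(4\pi n\sqrt y/d)/n$. Because $\mathcal{M}_k(y)\ll\sqrt y$ near $0$ and $\mathcal{M}_k(y)\ll y^{1/4}$ near $\infty$, the integral $\widetilde{\mathcal{M}}_k(w)=\int_0^\infty\mathcal{M}_k(y)y^{-w}\,\tfrac{dy}{y}$ converges absolutely for $1/4<\Re w<1/2$, and there the series may be integrated term by term (the attendant double series of absolute values converges in that strip). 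Substituting $y=x^2$ and invoking $\int_0^\infty J_{k-1}(t)t^{\mu-1}\,dt=2^{\mu-1}\Gamma(\tfrac{\mu+k-1}{2})/\Gamma(\tfrac{k+1-\mu}{2})$ with $\mu=1-2w$, one obtains, for $1/4<\Re w<1/2$,
\[\widetilde{\mathcal{M}}_k(w)=\alpha\,(2\pi)^{2w-1}\,\frac{\Gamma(k/2-w)}{\Gamma(k/2+w)}\,D(w)\,\zeta(2-2w),\qquad D(w):=\sum_{d\ge1}Q(d)\,d^{\,1-2w}.\]
The Euler product $D(w)=\prod_p\bigl(1+p^{3-2w}/(p^4-2p^2-p+1)\bigr)$ factors as $D(w)=\zeta(1+2w)\,G(w)$ with $G(w)=\prod_p(1+Q(p)p^{1-2w})(1-p^{-1-2w})$ holomorphic on $\Re w>-1/4$ and $G(0)\neq0$. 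Consequently $\widetilde{\mathcal{M}}_k$ continues meromorphically to $\Re w>-1/4$; there it is holomorphic except for simple poles at $w=1/2$ (from $\zeta(2-2w)$) and $w=0$ (from $\zeta(1+2w)$), and, after multiplication by $\widetilde\psi$, it decays rapidly on vertical lines.

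\emph{Contour shift and residue.} Mellin inversion gives, for any $\sigma\in(1/4,1/2)$,
\[\mathcal{M}^k_\Phi(y)=\frac{1}{\widetilde\psi(0)}\cdot\frac{1}{2\pi i}\int_{(\sigma)}\widetilde{\mathcal{M}}_k(w)\,\widetilde\psi(w)\,y^{w}\,dw.\]
Move the line of integration leftward to $\Re w=-\eta$ for a small fixed $\eta\in(0,1/4)$; the only pole crossed is the simple one at $w=0$, and the shifted integral is $O_{k,\Phi}(y^{-\eta})=o_k(1)$ as $y\to\infty$. Since $\operatorname{Res}_{w=0}\zeta(1+2w)=\tfrac12$, the residue at $w=0$ equals
\[\frac{1}{\widetilde\psi(0)}\operatorname{Res}_{w=0}\bigl(\widetilde{\mathcal{M}}_k(w)\widetilde\psi(w)y^{w}\bigr)=\operatorname{Res}_{w=0}\widetilde{\mathcal{M}}_k(w)=\frac{\alpha}{4\pi}\,\zeta(2)\,G(0),\]
where $G(0)=\prod_p(1+a_p)(1-p^{-1})$ with $a_p=p^{3}/(p^4-2p^2-p+1)$. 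A short Euler-factor computation gives $\zeta(2)\,G(0)=\prod_p\tfrac{p^4-2p^2+p}{p^4-2p^2-p+1}$, which is exactly $2\pi/\alpha$ because $\alpha=2\pi\prod_p\tfrac{p^4-2p^2-p+1}{p^4-2p^2+p}$. Therefore the residue is $\tfrac{\alpha}{4\pi}\cdot\tfrac{2\pi}{\alpha}=\tfrac12$, so $\mathcal{M}^k_\Phi(y)=\tfrac12+o_k(1)$ as $y\to\infty$.

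\emph{Main obstacle.} The one genuinely delicate part is the analytic-continuation bookkeeping for $\widetilde{\mathcal{M}}_k$: one must check that $D(w)$ is $\zeta(1+2w)$ times a function holomorphic and nonzero at $w=0$, locate all of its poles in $-1/4<\Re w<1/2$, and verify that the inversion contour may be pushed past $w=0$ but not past $w=1/2$ (which would produce a spurious $y^{1/2}$ main term) — the decay on vertical lines needed for this is supplied by $\widetilde\psi$ being the Mellin transform of a bump function. The second essential ingredient is the arithmetic identity $\zeta(2)\prod_p(1+a_p)(1-p^{-1})=2\pi/\alpha$; this exact cancellation of Euler factors is precisely what collapses the limit to the clean, $k$- and $\Phi$-independent value $1/2$.
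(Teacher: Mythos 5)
Your proposal is correct and follows essentially the same route as the paper: a Mellin transform applied to the Bessel expansion of Theorem \ref{tm3}, analytic continuation via the factorization of $\sum_d Q(d)d^{-s}$ into a zeta factor times a holomorphic Euler product, a contour shift past the resulting simple pole, and the Euler-product identity that collapses the residue to $1/2$. The only (cosmetic) difference is that you absorb the sum over the Bessel frequency into the Mellin transform as the factor $\zeta(2-2w)$, whereas the paper keeps that sum outside, establishes $F_\Phi(x_s)=r+O(x_s^{-\sigma})$ for the $d$-sum alone, and then sums the resulting asymptotics over $s$ (producing the same $\zeta(2)$).
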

To address the asymptotic behavior of Figure \ref*{dr}, we also treat the case of smoothing by a characteristic function of an interval. For technical reasons, we need to assume RH and that $k \geq 6$ for our analysis, but these assumptions can likely be relaxed.
\begin{thm}\label{thm5}
Assume RH for $\zeta(s)$. Let $c > 1$ and $k \geq 6$, let
\[\mathcal{M}^k_c(y):=\left(\int_1^c  \mathcal{M}_k\left(\frac{y}{u}\right) u^2 \frac{du}{u}\right) /\left(\int_1^c   u^2 \frac{du}{u}\right).\] Then $\mathcal{M}_k^c$ is continuous on $(0, \infty)$, $\mathcal{M}_k^c(0) = 0$,  and as $y \to \infty$, \[\mathcal{M}^k_c(y) =   \frac{1}{2} + o_{ k}(1).\]
\end{thm}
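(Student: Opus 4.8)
The regularity and boundary assertions are soft. The function $\mathcal M_k$ is continuous on $[0,\infty)$ — the term indexed by $r$ enters the sum at $y=r^2/4$ with value $\nu(r)\sqrt{4y-r^2}\,U_{k-2}(\cdot)=0$, so no jump occurs — and $\mathcal M_k(0)=0$. Writing $\mathcal M^k_c(y)=\frac{2}{c^2-1}\int_1^c u\,\mathcal M_k(y/u)\,du$ and using dominated convergence, $\mathcal M^k_c$ is continuous on $(0,\infty)$ and $\mathcal M^k_c(0)=\lim_{y\to 0^+}\mathcal M^k_c(y)=0$. So the content is the limit $\mathcal M^k_c(y)\to\tfrac12$.

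I would handle this by Mellin analysis. Since $|\mathcal M_k(w)|\ll w^{1/4}$ by Theorem~\ref{tm3}, the transform $\tilde f(z):=\int_0^\infty \mathcal M^k_c(y)\,y^{-z}\,\frac{dy}{y}$ converges for $z$ in a strip containing $(\tfrac14,\tfrac12)$, and $\mathcal M^k_c(y)=\frac{1}{2\pi i}\int_{(\sigma)}\tilde f(z)\,y^z\,dz$ there. A change of variables gives $\tilde f(z)=\frac{2}{c^2-1}\cdot\frac{c^{2-z}-1}{2-z}\cdot\tilde{\mathcal M}_k(z)$, with $\tilde{\mathcal M}_k(z)=\int_0^\infty\mathcal M_k(w)\,w^{-z}\,\frac{dw}{w}$. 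Inserting the Bessel expansion of Theorem~\ref{tm3} and the Mellin transform $\int_0^\infty J_{k-1}(u)\,u^{-2z}\,du=2^{-2z}\,\Gamma(\tfrac k2-z)/\Gamma(\tfrac k2+z)$ (term-by-term integration being valid for $\tfrac14<\Re z<\tfrac12$) yields
\[
\tilde{\mathcal M}_k(z)=\frac{\alpha\,2^{1-2z}}{(4\pi)^{1-2z}}\cdot\frac{\Gamma(\tfrac k2-z)}{\Gamma(\tfrac k2+z)}\cdot D(2z-1)\cdot\zeta(2-2z),\qquad D(s):=\sum_{d\ge 1}\frac{Q(d)}{d^{s}}=\prod_p\Big(1+\frac{p^{2-s}}{p^4-2p^2-p+1}\Big).
\]
The Dirichlet series $D(s)$ continues meromorphically by peeling off zeta-factors, its rightmost pole being at $s=-1$ with $D(s)=\zeta(s+2)\widehat D(s)$ for $\widehat D$ holomorphic on $\Re s>-3$; hence $\tilde f$ is holomorphic on $0<\Re z<\tfrac12$ with a single simple pole at $z=0$ (the $\Gamma$-ratio equals $1$ and $\zeta(2-2z)$ is regular there, while $\tfrac{c^{2-z}-1}{2-z}$ is entire with value $\tfrac{c^2-1}{2}$ at $z=0$).

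The plan is then to shift the contour in $\mathcal M^k_c(y)=\frac{1}{2\pi i}\int_{(\sigma)}\tilde f(z)y^z\,dz$ past $z=0$ to some $\Re z=-\eta<0$, picking up $\operatorname{Res}_{z=0}\tilde f(z)$, so that $\mathcal M^k_c(y)=\operatorname{Res}_{z=0}\tilde f(z)+O_k(y^{-\eta})$. One computes $\operatorname{Res}_{z=0}\tilde f(z)=\frac{\alpha}{2\pi}\,\zeta(2)\cdot\tfrac12\operatorname{Res}_{s=-1}D(s)$, where $\operatorname{Res}_{s=-1}D(s)=\widehat D(-1)=\prod_p(1-p^{-1})\big(1+\tfrac{p^3}{p^4-2p^2-p+1}\big)$. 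A pleasant consistency check is that this equals exactly $\tfrac12$: substituting $\alpha=2\pi\prod_p\frac{p^4-2p^2-p+1}{p^4-2p^2+p}$ and $\zeta(2)=\prod_p\frac{p^2}{p^2-1}$, the Euler product collapses term-by-term via $p^4+p^3-2p^2-p+1=(p-1)(p^3+2p^2-1)$, $\;p^4-2p^2+p=p(p-1)(p^2+p-1)$, and $(p+1)(p^2+p-1)=p^3+2p^2-1$, leaving $\prod_p 1$. Thus, modulo the contour estimate, $\mathcal M^k_c(y)=\tfrac12+o_k(1)$.

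The contour estimate is the one substantive point, and it is here (together with keeping the Bessel--Mellin manipulations and the $\Gamma$-factors tame in the relevant half-plane) that the hypotheses RH and $k\ge 6$ enter. On the line $\Re z=-\eta$ the Stirling ratio $\Gamma(\tfrac k2-z)/\Gamma(\tfrac k2+z)$ grows like $|t|^{2\eta}$, so to see that the shifted vertical integral and the connecting horizontal segments are admissible one must combine the decay of $(2-z)^{-1}$, polynomial bounds for $\zeta$ (hence for $D(2z-1)=\zeta(2z+1)\widehat D(\cdot)$) on vertical lines just left of $\Re s=1$ — where RH, through the Lindel\"of bound, supplies what is needed — and the conditional convergence coming from the oscillation of $y^z$ (an integration by parts in $t$). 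As remarked in the introduction, these hypotheses should be removable: a classical zero-free region in place of RH, and a separate treatment of the small weights $k=2,4$, ought to suffice. The remaining task, establishing the residue identity and the admissibility of the shift, is where all the work lies; everything else is bookkeeping, and the smooth-weight analogue is Theorem~\ref{thm4}.
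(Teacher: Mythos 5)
Your route is genuinely different from the paper's. The paper works on the real line: it writes $\int_1^c \mathcal{M}_k(y/u)\,u\,du$ as a sum over $d,s$ of explicit Bessel antiderivatives (Lemma \ref{J}), then evaluates $\sum_d d\,Q(d)J(a/d)$ by Abel summation against the partial sums $\sum_{d\le T}Q(d)=L-\Delta/T+R(T)$; this is where RH enters (Lemma \ref{Qcount}) and where $k\ge 6$ is used (Corollary \ref{integralbound} needs Bessel functions of index at least $5$). You instead take the Mellin transform of $\mathcal{M}^k_c$ and shift a contour past $z=0$. Your setup is correct: the transform converges on $1/4<\Re z<1/2$, the factorization $\tilde f(z)=\frac{2}{c^2-1}\cdot\frac{c^{2-z}-1}{2-z}\cdot\tilde{\mathcal M}_k(z)$ and the formula for $\tilde{\mathcal M}_k$ are right, the unique pole in the relevant strip is the simple pole at $z=0$ coming from $\zeta(2z+1)$ inside $D(2z-1)$, and your residue computation --- including the Euler-product collapse to $1/2$ --- checks out and is consistent with Theorem \ref{thm4}. (One inaccuracy: $\widehat D(s)=D(s)/\zeta(s+2)$ is holomorphic only on $\Re s>-3/2$; to reach $\Re s>-3$ one must also extract the $1/\zeta(2s+4)$ factor, as the paper does. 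This does not affect the residue at $s=-1$.)

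The gap is the contour shift, which you flag but do not execute, and it is not routine here. On $\Re z=-\eta$ the Stirling ratio grows like $|t|^{2\eta}$ while the sharp cutoff $\int_1^c$ contributes only a single power $|t|^{-1}$ through $(2-z)^{-1}$, so $|\tilde f(-\eta+it)|$ is only $O(|t|^{2\eta-1+\eps})$: the shifted integral is not absolutely convergent against the target bound, and the standard truncated-Perron scheme (truncate at height $T\gg y^{1/2}$ to control the tail of the original line, then bound the shifted segment trivially) yields $y^{-\eta}T^{2\eta}\gg 1$ and fails. One must genuinely exploit the oscillation of $y^{it}$ and $(y/c)^{it}$ against the slowly varying remainder --- the integration by parts in $t$ you allude to --- and this is precisely the content of the theorem; it is the analogue of the smoothing that makes Theorem \ref{thm4} straightforward. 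Relatedly, your accounting of the hypotheses is off: bounds for $\zeta$ just left of $\Re s=1$ are polynomial (indeed essentially Lindel\"of-quality there) unconditionally, and for a small shift $\eta<1/4$ the factor $1/\zeta(4z+2)$ is evaluated in $\Re>1$ and is harmless, so nothing in your sketch visibly uses RH or $k\ge 6$. If your argument truly closes without them, that is a stronger theorem and should be stated and justified; as written, it suggests the decisive estimate has not yet been confronted.
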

Because the smoothed function tends to $1/2$, it can only change sign a finite number of times. Since the murmuration density function changes sign infinitely often, this implies that as values of $c$ varies between $1$ to infinity, the resulting smoothed murmuration function can have any prescribed finite number of zeros.

Murmurations are a feature of the the one-level density transition range. The Katz-Sarnak philosophy (\cite{KS}) predicts that averages as in Theorem \ref*{thm4} for $P \sim N^a$ behave differently when $a < 1$ and $a > 1$, and our statements for $P \sim N$ describe the phase transition between these ranges. The unusual (for $k > 2$) normalization of the coefficients above also arises naturally from this interpretation. For a more detailed discussion of this connection, we point the reader to \cite{peterletter}. 

Computing similar averages weighted "harmonically" (i.e., by the value at $1$ of the symmetric square $L$-function) by means of the Petersson formula reveals that with weights, this bias becomes much less pronounced: the resulting function grows like $y$, as opposed to $\sqrt{y}$, at the origin.

Murmurations for elliptic curves over $\Q$ are not explained by these results, as they constitute a very sparse subset of weight $2$ modular forms. We point out that the best fit curve to approximate the data of elliptic curve murmurations does not match the curve in Figure \ref*{dr}. We also point out that computational observations of the aforementioned authors make a compelling case that this phenomenon is very sensitive to the ordering by conductor, and disappears almost entirely when the curves are ordered by naive height, $j$-invariant, or discriminant.

\section{Trace Formula Setup}

Given a square-free positive integer $N$ and a prime $P \nmid N$, let $H^{\text{new}}(N, k)$ denote a Hecke eigenbasis of the space $S^{\text{new}}(N, k)$ of weight $k$ cusp newforms for $\Gamma_0(N)$. For $f \in H^{\text{new}}(N, k)$, let $f$ be normalized to have $a_f(1) = 1$, and let $a_f(P) = \lambda_f(P) P^{(k-1)/2}$ denote the eigenvalue of $f$ under the $P$-th Hecke operator. Let $\eps(f)$ denote the root number of $f$ (recall $(-1)^{k/2} \eps(f)$ is equal to the eigenvalue of $f$ under the Atkin-Lehner involution $W_N$). In order to compute the average of $ a_f(P) \eps(f)$ for eigenforms $f$ ranging over square-free levels $N$ in an interval, we interpret $\sum_{f \in H^{\text{new}}(N, k)} a_f(P) \eps(f)$ as the trace of the operator $(-1)^{k/2}T_p \circ W_N$ on $S^{\text{new}}(N, k)$ and apply the corresponding trace formula. 

Such a trace formula was first derived by Yamauchi in \cite{yamauchi}; the result contained a computational error which was later corrected by Skoruppa and Zagier (\cite{SkZ}). This formula (section $2$, formula ($7$)) gives the trace of $T_p \circ W_N$ on the full space of cusp forms $S(N, k)$. As the authors point out in the discussion leading to formula $(5)$, oldforms coming from $S(M, k)$ contribute to the trace only when $N/M$ is a square. Since we are restricting ourselves to $N$ square-free, we thus have the following result at our disposal:
\begin{theoremnonum}[Skoruppa-Zagier (\cite{SkZ}, section $2$, formulas ($5$) and ($7$)]
For $N$ square-free and a prime $P \nmid N$, 
\begin{align*}
\sum_{f \in H^{\text{new}}(N, k)} \sqrt{P} \lambda_f(P) \eps(f) &=    \frac{  H_1(-4 P N)}{2}+  (-1)^{k/2 - 1} U_{k - 2}\left(\frac{r\sqrt{N}}{2 \sqrt{P}} \right) \sum_{1 \leq r \leq 2 \sqrt{P/N}} H_1(r^2 N^2 - 4 P N)\\
&  -  \delta_{k = 2}(P + 1). \\
\end{align*}
\end{theoremnonum}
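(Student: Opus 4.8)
The displayed identity is the Eichler--Selberg trace formula for the composite operator $T_P\circ W_N$, so the plan is to derive it from the geometric side of that formula. \emph{Step 1 (reduce to a trace on newforms).} Since $a_f(P)=\lambda_f(P)P^{(k-1)/2}$ and $(-1)^{k/2}\eps(f)$ is the eigenvalue of the Fricke involution $W_N$ on $f$, the left-hand sum equals $(-1)^{k/2}P^{-(k-2)/2}$ times the trace of the classically normalized operator $T_P\circ W_N$ on $S_k^{\mathrm{new}}(\Gamma_0(N))$ (up to the normalization convention chosen for $W_N$, which one must fix carefully). In the decomposition $S_k(\Gamma_0(N))=\bigoplus_{M\mid N}\bigoplus_{d\mid N/M}V_d\,S_k^{\mathrm{new}}(M)$, only the diagonal blocks contribute to $\operatorname{Tr}(T_P\circ W_N)$, and the block attached to level $M$ is nonzero only when $N/M$ is a perfect square; as $N$ is square-free this forces $M=N$, so $\operatorname{Tr}(T_P W_N\mid S_k^{\mathrm{new}}(N,k))=\operatorname{Tr}(T_P W_N\mid S_k(\Gamma_0(N)))$ and I may apply the classical trace formula to the full cusp space. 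This is the first use of square-freeness.

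\emph{Step 2 (the geometric side).} The operator $T_P\circ W_N$ is given by the $\Gamma_0(N)$-double coset of $\left(\begin{smallmatrix}0&-P\\N&0\end{smallmatrix}\right)$, and its trace is a sum over $\Gamma_0(N)$-conjugacy classes of integral matrices $\gamma$ with $\det\gamma=PN$ in that double coset, each weighted by a Gegenbauer polynomial in $\operatorname{tr}\gamma$ (equivalently a Chebyshev factor) and by a reciprocal-stabilizer-order factor. A short computation with coset representatives shows every such $\gamma$ has the shape $\left(\begin{smallmatrix}Na&b\\Nc&Nd\end{smallmatrix}\right)$, so $N\mid\operatorname{tr}\gamma$; write $\operatorname{tr}\gamma=rN$. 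I then run through the standard classification: there is no scalar/central term since $PN$ is not a perfect square; a parabolic class ($r^2N^2=4PN$) or split-hyperbolic class ($r^2N^2-4PN$ a positive square) would force $N\mid 4P$, hence $N\mid 4$, so for square-free $N>2$ these are absent entirely (and for $N\le 2$ they fold into the weight-$2$ term) --- the second use of square-freeness; and the elliptic classes are precisely those with $0\le r<2\sqrt{P/N}$.

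\emph{Step 3 (evaluate the elliptic term).} For fixed $r$, the stabilizer-weighted count of $\Gamma_0(N)$-classes of $\gamma$ with $\operatorname{tr}\gamma=rN$, $\det\gamma=PN$ should equal exactly the Hurwitz class number $H_1(r^2N^2-4PN)$; establishing this is the crux, and it is where square-freeness of $N$ and $P\nmid N$ are used a third time: one must check that at each prime $p\mid N$ the extra level condition together with the Fricke twist contributes local multiplicity exactly $1$, so that no proper-order / optimal-embedding correction survives and one recovers the \emph{full} Hurwitz class number rather than a restricted count. The accompanying polynomial weight is $\rho\mapsto(\rho^{k-1}-\bar\rho^{k-1})/(\rho-\bar\rho)$ with $\rho+\bar\rho=rN$, $\rho\bar\rho=PN$, which on writing $rN=2\sqrt{PN}\cos\theta$ becomes $(PN)^{(k-2)/2}\,U_{k-2}\!\big(r\sqrt{N}/(2\sqrt{P})\big)$, and $U_{k-2}(0)=(-1)^{k/2-1}$. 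Assembling the universal factor $-\tfrac12$ of the trace formula, the factor $2$ from pairing $\pm r$ (with $r=0$ surviving at weight $\tfrac12$, producing the isolated term $\tfrac12 H_1(-4PN)$), and the sign and power of $P$ from Step 1 --- and cancelling the residual power of $N$ against the normalization of $W_N$ --- yields exactly $(-1)^{k/2-1}\sum_{1\le r\le 2\sqrt{P/N}}U_{k-2}\!\big(r\sqrt{N}/(2\sqrt{P})\big)H_1(r^2N^2-4PN)$.

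\emph{Step 4 (the weight-$2$ correction) and the main obstacle.} For $k=2$ the trace formula is cleanest on $M_2(\Gamma_0(N))$ rather than $S_2$, so I account for the Eisenstein part: $T_P\circ W_N$ acts on the $(2^{\omega(N)}-1)$-dimensional space of weight-$2$ Eisenstein series for $\Gamma_0(N)$, on which $W_N$ permutes the cusps and $T_P$ acts essentially by $P+1$; computing its trace there produces the correction $-\delta_{k=2}(P+1)$. The principal obstacle is Step 3 --- pinning down the local multiplicities at the primes dividing $N$ so that the weighted orbit count is precisely $H_1(r^2N^2-4PN)$ --- together with the bookkeeping of every sign, the power of $N$, and the $W_N$-normalization convention; by comparison the reduction to newforms in Step 1 and the vanishing of the central, parabolic and hyperbolic terms in Step 2 are routine once the double-coset shape $\left(\begin{smallmatrix}Na&b\\Nc&Nd\end{smallmatrix}\right)$ is in hand.
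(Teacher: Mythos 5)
This statement is not proved in the paper at all: it is quoted verbatim from Skoruppa--Zagier \cite{SkZ} (their formulas (5) and (7)), and the only argument the paper itself supplies is the reduction to newforms --- the observation that oldforms from level $M$ contribute to $\operatorname{Tr}(T_P\circ W_N)$ only when $N/M$ is a square, so that for square-free $N$ the newform trace equals the full trace. Your Step 1 reproduces exactly that argument, and it is correct. Everything after that is your reconstruction of the derivation that lives in \cite{SkZ} and in Yamauchi's earlier paper, not anything the present paper attempts.

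As a reconstruction, your outline follows the right road (double coset of $\left(\begin{smallmatrix}0&-P\\N&0\end{smallmatrix}\right)$, trace divisible by $N$, vanishing of central/parabolic/hyperbolic classes for square-free $N$ coprime to $P$, Chebyshev weight from the Gegenbauer factor, Eisenstein correction at $k=2$), and small checks such as $U_{k-2}(0)=(-1)^{k/2-1}$ explaining why the $r=0$ term carries no sign are consistent with the displayed formula. But you have explicitly deferred the crux: Step 3, the identification of the stabilizer-weighted count of $\Gamma_0(N)$-classes with trace $rN$ and determinant $PN$ in the twisted double coset with the full Hurwitz class number $H_1(r^2N^2-4PN)$, including the local multiplicity analysis at primes dividing $N$. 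That computation is precisely where Yamauchi's original derivation contained the error later corrected by Skoruppa and Zagier, so labelling it ``the principal obstacle'' and stopping there means the proposal is a plan rather than a proof. If your intent is to match the paper, the honest route is the paper's own: cite \cite{SkZ} for the full-space trace and supply only the square-freeness reduction. If your intent is to reprove the formula, Step 3 must actually be carried out.
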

Here $U_k$ is the Chebyshev polynomial and $H$ is the Hurwitz class number, that is, $H_1(-d)$ is the number of equivalence classes with respect to $\sl_2(\Z)$ of positive-definite binary quadratic forms of discriminant $-d$ weighed by the number of automorphisms (i.e., with forms corresponding to multiples of $x^2 + y^2$ and $x^2 + xy + y^2$ counted with multiplicities $1/2$ and $1/3$, accordingly). $H_1$ can be expressed in terms of the Gauss class number $h$ via: 
$$H_1(-d) = \sum_{f \in \N: f^2 | d} h(-d/f^2) + \O(1),$$ with the error term disappearing if $d \neq 3\cdot \square, 4 \cdot \square$.

Assume from now on that $P > 2$ and $P \nmid N$. The square factors of $4 PN$ are $1$ and $4$, since by assumption $P \nmid N$. For a prime $q$ and $r \geq 1$, the condition $q^2 \mid  N(r^2N - 4P)$ can hold either if $q^2 | r^2N - 4 P$ or if $q$ divides both $N$ and $4P$, i.e., if $q = 2$ and $N$ is even. However, if $N = 2 \tilde{N}$ is even (with $\tilde{N}$ odd), then for any $d$ with $4d^2 | (r^2 N^2 - 4 PN)$, one has \[(r^2N^2 - 4 P N)/4d^2 = (r^2\tilde{N}^2 - 2 P \tilde{N})/d^2,\] which is always $2$ or $3$ modulo $4$, so the corresponding class number vanishes. Thus it suffices to consider square divisors of $r^2N^2 - 4 PN$ for which $d^2 | r^2 N - 4P$. In summary, for $N$ square-free, and a prime $P \nmid 2N$, the trace formula can be rewritten as

\begin{align}\label{traceformula}
\sum_{f \in H^{\text{new}}(N, k)} \sqrt{P} \lambda_f(P) \eps(f)  &=     \frac{ h(-4PN)}{2} + \frac{h(-PN)}{2}  - \delta_{k = 2} P + \O(1).  \nonumber\\
&+ (-1)^{k/2 -1 } U_{k - 2}\left(\frac{r\sqrt{N}}{2 \sqrt{P}} \right)\sum_{1 \leq r \leq 2 \sqrt{\frac{P}{N}}} \sum_{d^2 | r^2 N - 4 P} h (N (r^2N - 4P)/d^2).
\end{align}
 
From this formula, one can already see that the trace is positively biased when $P \leq N^{1 - \delta}$ for some $\delta >0$. Indeed, for $P/N$ near $0$, the only negative term in this expression is $-P$; on the other hand, Siegel's bound dictates that the class number terms should be of size $(PN)^{1/2 + \eps}$. This has been observed in (\cite{kimball}) and (\cite{kimball1}), (\cite{kimball2}). 

On the other hand, for $P$ of size $N^{1 +\eps }$, the balance becomes more subtle, and as we will see, the trace can be either positive or negative, even when averaged over short intervals in $N$.

\section{Average Class Number in Short Intervals}
Our interest in this section is to exploit the Dirichlet class number formula to understand sums of class numbers in (\ref*{traceformula}) as the square-free parameter $N$ ranges over a short interval $[X, X + Y]$ for $Y = o(X)$. For such an interval, the square root term in the class number formula has approximately fixed size, so these sums can be understood by averaging Dirichlet characters coming from a truncated $L$ function special value. Carrying out this computation yields Theorem \ref*{mainthm}. We establish it via the following two propositions: 
\begin{prop}\label{easyaverage}
Let $P > 2$ be prime and let $[X, X + Y]$ be an interval of length $Y = o(X)$. Let $y:= P/X$. Then as $X \to \infty$, 

\begin{multline}
\frac{\zeta(2) \pi}{X Y}  \sideset{}{^\square}\sum_{\substack{ N \in [X, X + Y] \\ P \nmid N }} \frac{h(-PN)}{2} + \frac{h(-4 P N)}{2}  =   A \sqrt{y}    + \O_\eps\Big(\frac{1}{P^{\frac{1}{2}}X^{\frac{1}{2}}} + \frac{P^{\frac{11}{19}}}{Y^{\frac{16}{19}}} +    \frac{ Y P^{\frac{1}{2}}}{X^{\frac{3}{2}}}  \Big) (XP)^\eps,
\end{multline}
where 
\[A := \prod_p \left(1 + \frac{p}{(p+ 1)^2 (p-1)}\right).\]
\end{prop}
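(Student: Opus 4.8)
\emph{Proof proposal.}
The plan is to convert the class-number sum into a sum of Dirichlet $L$-values at $1$, open each $L$-value as a truncated Dirichlet series, use quadratic reciprocity to decouple the level $N$ from the summation variable, and then evaluate the resulting character sums over squarefree integers in the short interval $[X,X+Y]$. First I would invoke the Dirichlet class number formula. Since $N$ is squarefree and $P\nmid N$, the integer $PN$ is squarefree, and --- according to the residue of $PN$ modulo $4$ --- one of $-PN$, $-4PN$ is a fundamental discriminant while the other is either $4$ times a fundamental discriminant or not a discriminant at all. Using the class number formula for (possibly non-maximal) imaginary quadratic orders one finds
\[
\tfrac12 h(-PN)+\tfrac12 h(-4PN)=\frac{\sqrt{PN}}{\pi}\,w(PN)\,L(1,\chi_{PN}),
\]
where $\chi_{PN}$ is an explicit real primitive character of conductor $\asymp PN$ (equal to $PN$, $4PN$, or $8$ times the odd part of $PN$ according to $PN\bmod 8$) and $w(PN)\in\{1,2\}$ depends only on $PN\bmod 8$; the role of the particular combination $\tfrac12 h(-PN)+\tfrac12 h(-4PN)$ that the trace formula hands us is exactly to make this identity uniform in $PN\bmod 8$. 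As noted in the excerpt, for $N\ge 2$ the Hurwitz-to-Gauss error term $O(1)$ in fact vanishes here, so nothing is lost in this step.

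Next I would truncate $L(1,\chi_{PN})=\sum_{n\le Z}\chi_{PN}(n)/n+O\!\big((PN)^{1/2+\eps}/Z\big)$ by P\'olya--Vinogradov and partial summation, with a parameter $Z$ to be optimized, and then apply quadratic reciprocity to factor $\chi_{PN}(n)=\big(\tfrac nP\big)\,\psi_n(N)\,\eta(n,N)$, where $\big(\tfrac nP\big)$ is a Legendre symbol, $\psi_n$ is --- for fixed $n$ --- a real character in the variable $N$ of conductor $\asymp$ the squarefree kernel of $n$ (trivial precisely when $n$ is a perfect square), and $\eta(n,N)$ depends only on $n,N\bmod 8$. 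This decoupling lets me bring the $N$-sum inside, replace $\sqrt N$ by $\sqrt X$ on $[X,X+Y]$ at the cost of $O\!\big(Y P^{1/2}X^{-3/2}(XP)^\eps\big)$ (one of the listed errors), and reduce matters to bounding $\sum_{n\le Z}\frac1n\big(\tfrac nP\big)\sum^{\square}_{N\in[X,X+Y],\,P\nmid N}\psi_n(N)\,\eta(n,N)\,w(PN)$.

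The diagonal is the main term: it comes from $n$ a perfect square, where $\psi_n$ is trivial and $\big(\tfrac nP\big)=1$ (for $P\nmid n$), so the inner sum counts squarefree $N\in[X,X+Y]$ in a fixed residue class mod $8$ with $P\nmid N$, which is $\tfrac Y{\zeta(2)}$ times a local density plus an error. Completing the sum over squares $n=m^2$, assembling the local densities at $2$ (from $w$, $\eta$, and the squarefree condition mod $8$) and at the other primes, and multiplying out produces $A\sqrt y$ with $A=\prod_p\big(1+\tfrac p{(p+1)^2(p-1)}\big)$; the difference between the true local factor at $P$ and the ``generic'' one built into $A$ is $O(1/P)$, which is responsible for the $O\!\big((P^{1/2}X^{1/2})^{-1}(XP)^\eps\big)$ error term.

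The hard part will be the off-diagonal: bounding $\sum^{\square}_{N\in[X,X+Y]}\psi_n(N)$ when $\psi_n$ is nontrivial and its conductor $\asymp n$ may be as large as $Z$. Detecting squarefreeness via $\mu^2(N)=\sum_{d^2\mid N}\mu(d)$ and writing $N=d^2m$ turns this into short character sums over intervals of length $Y/d^2$; bounding these by Burgess in the longer ranges and trivially for large $d$, then summing over $d$, gives $\big|\sum^{\square}_{N}\psi_n(N)\big|\ll Y^{1/2}n^{3/16+\eps}$ (one could settle for the cruder P\'olya--Vinogradov bound $Y^{1/2}n^{1/4+\eps}$ at the price of a weaker exponent). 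Weighting by $1/n$, summing over $n\le Z$, and balancing the result against the truncation error $\ll P^{1+\eps}/Z$ pins down $Z$ as a small power of $PXY$ and yields the claimed error $O\!\big(P^{11/19}Y^{-16/19}(XP)^\eps\big)$; the exponents here are deliberately wasteful, as the footnote in the excerpt signals. Throughout, the $P\nmid N$ restriction and the mod-$8$ bookkeeping contribute only to the three error terms already listed, so the genuine obstacle is this short-interval character-sum estimate together with the optimization of $Z$; everything else is the class number formula plus routine squarefree sieving.
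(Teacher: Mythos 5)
Your proposal follows essentially the same route as the paper: Dirichlet's class number formula, truncation of $L(1,\chi)$ via P\'olya--Vinogradov, a diagonal/off-diagonal split according to whether $n$ is a perfect square, squarefree counting in arithmetic progressions for the main term, and a Burgess-type bound for the non-principal characters, balanced against the truncation error. The one soft spot is your claimed off-diagonal bound $\ll Y^{1/2}n^{3/16+\eps}$ for the squarefree-restricted character sum over $[X,X+Y]$: the sieve moduli $d\in(\sqrt{Y},\sqrt{X}]$ are not controlled by Burgess on intervals of length $Y/d^2<1$ and need a separate counting argument, and the paper avoids this entirely by using the long-range bound $\sum_{N\le Z}\mu^2(N)\chi(N)\ll\sqrt{Z}\,m^{3/16+\eps}$ and differencing, so that the off-diagonal is $\ll X^{1/2}T^{3/16+\eps}$ --- it is this $X^{1/2}$, balanced against the truncation error $Y(PX)^{1/2}/T$, that produces exactly the exponent $P^{11/19}Y^{-16/19}$ in the statement; since any bound at most $X^{1/2}n^{3/16+\eps}$ suffices, this does not affect the validity of your argument.
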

\begin{prop}\label{hardaverage}

Let $P > 2$ be prime and let $[X, X + Y]$ be an interval of length $Y = o(X)$. Let $y:= P/X$. Then as $X \to \infty$, 
\begin{align*}
&\frac{\zeta(2) \pi}{Y X}\sum_{1 \leq r \leq 2 \sqrt{P/X}} \  \sideset{}{^\square}\sum_{\substack{ N \in [X, X + Y] \\ P \nmid N }} H_1(r^2 N^2 - 4 PN) =\sum_{1\leq r \leq 2 \sqrt{P/X}}B \nu(r)  \sqrt{4 y- r^2} \\
& + \O\Big(  \frac{ P^{\frac{11}{10}}}{Y^{\frac{2}{5}} X^{\frac{9}{10}}} +   \frac{Y P}{ X^2} + \frac{PY^{\frac{1}{2}}}{ X^{\frac{3}{2} }} +    \frac{P}{X^{\frac{1}{2}} Y^{\frac{13}{18}}} + \frac{P}{X Y^{\frac{1}{9}}} \Big) (XYP)^\eps,
\end{align*}
where 
\[ B:= \prod_p \frac{p^4 - 2 p^2 - p + 1}{(p^2-1)^2}\] and $\nu(r)$ are defined as in Theorem \ref*{mainthm}.
\end{prop}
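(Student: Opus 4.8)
The plan is to reduce the average of the Hurwitz class number $H_1(r^2N^2 - 4PN)$ over square-free $N \in [X, X+Y]$ to an average of special values of Dirichlet $L$-functions, via the class number formula, and then carry out the character average explicitly. First I would write $H_1(r^2N^2 - 4PN) = H_1(N(r^2N - 4P))$ and, using the relation $H_1(-d) = \sum_{f^2 \mid d} h(-d/f^2) + O(1)$ recorded after the Skoruppa--Zagier theorem, together with the divisibility analysis already carried out in the excerpt (only $d^2 \mid r^2N - 4P$ contributes, the even-$N$ square factors dying because the resulting discriminant is $2$ or $3$ mod $4$), reduce to sums of honest Gauss class numbers $h(N(r^2N-4P)/d^2)$. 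For a fixed $r$ with $1 \le r \le 2\sqrt{P/X}$, the quantity $4PN - r^2N^2 = N(4P - r^2N)$ is positive of size $\asymp X^2(4y - r^2)$, essentially constant in multiplicative terms over the short interval, so in Dirichlet's formula $h(-D) = \frac{\sqrt D}{\pi} L(1, \chi_{-D})$ (for fundamental $-D$; a finite Euler-factor correction otherwise) the archimedean factor $\sqrt D \sim X\sqrt{4y - r^2}$ can be pulled out up to a negligible relative error of size $Y/X$.

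Next I would expand $L(1, \chi_{-D})$ as a Dirichlet series, truncate it at a threshold $M$ (a Pólya--Vinogradov / smoothed-partial-summation truncation with error controlled by $\sqrt{D}/M$-type bounds, accounting for the $(XP)^\eps$ conductor loss), and swap the order of summation so that the inner sum becomes $\sum_{N} \left(\frac{-N(4P - r^2N)}{n}\right)$ over square-free $N$ in the short interval. The key input is then an estimate for character sums $\sum_{N \in [X, X+Y]}^\square \chi_n(N(4P - r^2 N))$: the main term comes from the ``diagonal'' $n = \square$ contribution, which after detecting square-freeness by Möbius and executing the $N$-sum produces the local densities assembling into the constant $B$ and the multiplicative factor $\nu(r)$, while the off-diagonal $n \ne \square$ terms are bounded by treating $N \mapsto N(4P - r^2N)$ as a quadratic polynomial and invoking Weil-type bounds for complete character sums to moduli $n$, combined with completion of the short sum over $N$. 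Summing the resulting main term over $r$ gives $\sum_r B\,\nu(r)\sqrt{4y - r^2}$, and the various error terms —from the archimedean approximation ($YP/X^2$ after normalization), from $L$-truncation ($PY^{1/2}/X^{3/2}$), from the completion/Weil step in the off-diagonal ($P^{11/10}/(Y^{2/5}X^{9/10})$ and $P/(X^{1/2}Y^{13/18})$), and from the square-free sieve tail ($P/(XY^{1/9})$)— are exactly the ones quoted, after multiplying through by $\zeta(2)\pi/(YX)$ and using that there are $O(\sqrt{y})$ values of $r$.

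The main obstacle I anticipate is the off-diagonal character-sum bound: unlike in Proposition~\ref{easyaverage}, here the character is evaluated at the quadratic $N(4P - r^2N)$ rather than at $N$ itself, so one cannot simply apply Pólya--Vinogradov to a linear argument. One must instead complete the sum over $N$ modulo $n$ (or modulo a suitable multiple capturing the square-free condition and the fixed residues forced by $P$ and $r$), bound the resulting complete sum $\sum_{N \bmod n}\left(\frac{N(4P - r^2 N)}{n}\right)$ by a Weil/Jacobi-sum estimate of size $O(\sqrt n \cdot 2^{\omega(n)})$ — being careful about the degenerate moduli where $4P - r^2 N$ or the discriminant $r^2 P$ of the quadratic is not coprime to $n$ — and then optimize the truncation point $M$ against the interval length $Y$. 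Balancing these contributions uniformly in $r$ up to the edge $r \approx 2\sqrt{y}$, where $4y - r^2$ degenerates, is where the somewhat delicate exponents $P^{11/10}/Y^{2/5}$ etc.\ come from; I would not expect these to be optimal, only to suffice for a power saving when $P \ll X^{1+\delta_1}$ in the stated range of $\delta_1, \delta_2$.
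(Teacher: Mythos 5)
Your overall skeleton (class number formula, pulling out the archimedean factor, truncating $L(1,\chi)$, reducing to short character sums in $N$) matches the paper's, but there is a genuine gap at the decisive step: the split into a ``diagonal'' main term from $n = \square$ and an ``off-diagonal'' error from $n \neq \square$ does not work here, and the paper explicitly flags this. In Proposition \ref{easyaverage} the argument of the character is linear in $N$, so for non-principal $\chi_n$ the complete sum over a period vanishes and the incomplete sum is genuinely small. Here the argument is the quadratic $N(r^2N - 4P)$, and the complete sum $\sum_{a \bmod n} \left(\frac{a}{n}\right)\left(\frac{ar^2 - 4P}{n}\right) = \theta_r(n)$ does \emph{not} vanish for non-square $n$: for instance $\theta_r(p) = -1$ for $p \nmid 2rP$. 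Consequently, when you complete the sum over $N \in [X, X+Y]$ modulo $n$ (for $n$ well below $Y$), you pick up a secondary main term of size $\asymp Y\,\theta_r(n)/n$ times local densities; after dividing by the $n$ from the $L$-series and summing over non-square $n$, these accumulate to a nonzero constant multiple of $Y$ --- the same order as the claimed main term --- and they are precisely what produce the factors $\prod_p\bigl(1 - \frac{2p}{(p^2-1)^2+p}\bigr)$ inside $B\nu(r)$. Your Weil bound $O(\sqrt{n}\,2^{\omega(n)})$ on the complete sum cannot rescue this: it yields a total contribution $O\bigl(Y\sum_n n^{-3/2}\bigr) = O(Y)$, not $o(Y)$, so the non-square terms must be \emph{evaluated}, not bounded. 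With your split, the main term you would obtain is the wrong Euler product.

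The paper's route instead evaluates $\mathcal{S}_{d,n,r}$ asymptotically for \emph{all} $n \leq Y^{\sigma}$ (square or not) using Hooley's equidistribution of square-free integers in residue classes modulo $d^2 n$, and then assembles the full main term --- including the non-square contributions via $\theta_r$ and the correction factors $\phi^o_{r,d}$ attached to the square divisors $d$ --- through the Euler-product computation of Lemma \ref{worstcomputation}. Only in the complementary range $n > Y^{\sigma}$, where the interval is too short to contain enough periods, does it resort to upper bounds, via Poisson summation and an explicit computation of the Fourier maxima of $x \mapsto \chi(Ax+B)\chi(Cx+D)$ (Lemmas \ref{shortintchar} and \ref{characterbd}); your completion-plus-Weil idea is in the right spirit for that regime only. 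A secondary omission: you do not engage with the joint structure of the $d$-sum (the congruences $d^2 \mid r^2N - 4P$, the admissibility of $(r,d)$, and the resulting sets $\mathcal{R}_{r,d}$), which is needed to make the local densities come out as $B\nu(r)$ rather than something simpler.
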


Subsections \ref*{pn} proves Proposition \ref*{easyaverage}. In subsection \ref*{hellpn}, we prove Proposition \ref*{hardaverage} by adapting the same idea to the more complex square divisor structure of the arguments of the class numbers involved. Finally, in subsection \ref*{allotherpn} we collect all the results to prove Theorem \ref*{mainthm}.

\subsection{$H_1(-4PN)$}\label{pn}
We begin by stating some properties of multiplicative functions that will come up in evaluating the sum of $H_1(-4PN)$. After that, we compute sums over $h(-PN) $ and $h(-4PN)$ separately.
\subsection{Some Multiplicative Functions}
We let 

\begin{lemma}\label{phissums}
 Let $K$ be a cut-off parameter and let $P \neq 2$ be a prime. Let 
\[\eta(m):= \frac{m}{\psi(m)} = \prod_{p | m}\frac{p}{p + 1}.\]

\[\sum_{\substack{ m \text{ odd } \\ (P, m) =1 }}^K \frac{\eta(m)}{m^2} = \frac{9A}{11} + \O\left(\frac{1}{P^2} + \frac{1}{K^{1 - \eps}}\right); \ \ \sum_{\substack{ m :  (P, m) =1 }}^K  \hspace{-0.15in} \frac{\eta(2m) }{m^2}  =  \frac{8A}{11}+ \O\left(\frac{1}{P^2} + \frac{K^\eps}{K}\right).\]
\end{lemma}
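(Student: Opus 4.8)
The plan is to recognize $\eta(m)/m^2$ as (essentially) a multiplicative function, evaluate the corresponding full series as an absolutely convergent Euler product, and then control the truncation at the cut-off $K$ using the trivial bound $\eta(m)\le 1$.

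First I would compute the full sum. Since $\eta(p^k)=p/(p+1)$ for every $k\ge 1$, and since $\sum_{m\ge 1}\eta(m)/m^2$ is dominated by $\zeta(2)$ and hence converges absolutely, it factors as
\[\sum_{m\ge1}\frac{\eta(m)}{m^2}=\prod_p\Bigl(1+\frac{p}{p+1}\sum_{k\ge1}p^{-2k}\Bigr)=\prod_p\Bigl(1+\frac{p}{(p+1)(p^2-1)}\Bigr)=\prod_p\Bigl(1+\frac{p}{(p+1)^2(p-1)}\Bigr)=A,\]
the constant of Proposition~\ref{easyaverage}. Restricting to $m$ odd and coprime to $P$ simply removes the local factors at $2$ and at $P$: the factor at $2$ equals $1+\tfrac29=\tfrac{11}{9}$, while the factor at $P$ equals $1+\tfrac{P}{(P+1)^2(P-1)}=1+\O(P^{-2})$. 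Hence
\[\sum_{\substack{m\text{ odd}\\(m,P)=1}}\frac{\eta(m)}{m^2}=\frac{9A}{11}\bigl(1+\O(P^{-2})\bigr)=\frac{9A}{11}+\O(P^{-2}).\]

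For the second series I would avoid treating $m\mapsto\eta(2m)/m^2$ as multiplicative in $m$ — it is not, because the Dedekind $\psi$ does not factor across the prime $2$ — and instead write $m=2^a m'$ with $m'$ odd. Then $\eta(2m)=\eta(2^{a+1})\,\eta(m')=\tfrac23\,\eta(m')$, so the sum over $m$ coprime to $P$ separates completely:
\[\sum_{(m,P)=1}\frac{\eta(2m)}{m^2}=\frac23\Bigl(\sum_{a\ge0}4^{-a}\Bigr)\Bigl(\sum_{\substack{m'\text{ odd}\\(m',P)=1}}\frac{\eta(m')}{(m')^2}\Bigr)=\frac23\cdot\frac43\cdot\Bigl(\frac{9A}{11}+\O(P^{-2})\Bigr)=\frac{8A}{11}+\O(P^{-2}).\]
Finally, since $\eta(m)=\prod_{p\mid m}\tfrac{p}{p+1}\le1$, both tails past $K$ are bounded by $\sum_{m>K}m^{-2}\ll 1/K$, which is absorbed into $\O(K^{-(1-\eps)})$ in the first estimate and into $\O(K^{\eps}/K)$ in the second. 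Subtracting the tails from the full sums yields the lemma.

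There is essentially no genuine obstacle here; the computation is routine. The single point that warrants care — and the reason the argument is spelled out rather than dispatched with a one-line "Euler product" remark — is the failure of multiplicativity of $\eta(2m)/m^2$ in the second sum, which forces the explicit separation of the $2$-part from the odd part carried out above.
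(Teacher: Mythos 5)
Your proof is correct and follows essentially the same route as the paper: evaluate the full sum as an Euler product (removing the local factors at $2$ and $P$ to get $9A/11$), split off the $2$-part via $m=2^a m'$ for the second sum, and bound the tails trivially. Your geometric factor $\sum_{a\ge0}4^{-a}=\tfrac43$ is the correct one (the paper's displayed $\sum_{\ell\ge0}2^{-\ell}$ is a typo for $2^{-2\ell}$, as only $\tfrac23\cdot\tfrac43\cdot\tfrac{9A}{11}$ yields $\tfrac{8A}{11}$), so there is nothing to add.
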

\begin{proof}
Since $\eta(n) < n^\eps$ and since 
\[\sum_{\ell\geq 1} p_{-2\ell} \  \eta(\ell) = \frac{1}{p^2} \frac{1}{1 - p^{-2}} \frac{p}{(p + 1)}, \] we have
\[\sum_{\substack{ m \text{ odd } \\ (P, m) =1 }}^K \frac{\eta(m)}{m^2}  =\prod_{p \neq 2, P} \left(1 + \frac{p}{(p+1)^2(p-1)}\right) + \O\left( \frac{1}{K^{1 - \eps} }\right) =  \frac{9A}{11} + \O\left(\frac{1}{P^2} + \frac{1}{K^{1 - \eps}}\right). \]
For the second sum, writing $m = 2^\ell n$ for $n$ odd, we get
\[\sum_{\substack{ m :  (P, m) =1 }}^K  \hspace{-0.15in} \frac{\eta(2m) }{m^2}  = \frac{2}{3} \sum_{\ell \geq 0} 2^{-\ell}  \prod_{p \neq 2, P } \left(1 + \frac{p}{(p+1)^2(p-1)}\right) + O\left(\frac{K^\eps}{K}\right) =  \frac{8A}{11}+ \O\left(\frac{1}{P^2} + \frac{K^\eps}{K}\right). \]
\end{proof}

\begin{lemma}\label{twistedsum}
Let $m \in \N$, and let $\chi$ be the principal quadratic character modulo $m$. Then: 
$$ \sum_{\substack{ N\leq Z}}  \mu^2(N) \chi(N) =  \frac{Z}{\zeta(2)}\prod_{p | m}\frac{p}{p + 1}   + O_{\eps} \left(\sqrt{Z}m^\eps \right) = Z \cdot \frac{ \eta(m)}{\zeta(2)}   + O_{\eps} \left(\sqrt{Z} m^\eps \right)$$
\end{lemma}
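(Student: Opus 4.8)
The plan is a standard squarefree-sieving computation. Note first that $\chi(N) = \ind_{(N,m)=1}$, so the sum counts squarefree $N \le Z$ coprime to $m$. I would detect squarefreeness by the Möbius identity $\mu^2(N) = \sum_{d^2 \mid N}\mu(d)$ and interchange the order of summation, writing $N = d^2 e$. Since $(d^2 e, m) = 1$ if and only if $(d,m)=1$ and $(e,m)=1$, this gives
\[\sum_{N \le Z}\mu^2(N)\chi(N) = \sum_{\substack{d \le \sqrt Z\\(d,m)=1}}\mu(d)\sum_{\substack{e \le Z/d^2\\(e,m)=1}}1.\]

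Next I would evaluate the inner sum by inclusion--exclusion over divisors of $m$: $\sum_{e\le T,\,(e,m)=1}1 = \frac{\phi(m)}{m}T + \O(2^{\omega(m)})$, absorbing $2^{\omega(m)} = \O_\eps(m^\eps)$. Substituting $T = Z/d^2$ and summing the error over $d \le \sqrt Z$ contributes $\O_\eps(\sqrt Z\, m^\eps)$. For the main term we are left with $\frac{\phi(m)}{m}Z\sum_{d\le\sqrt Z,\,(d,m)=1}\mu(d)/d^2$; completing the $d$-series to infinity costs at most $\frac{\phi(m)}{m}Z\cdot\O(Z^{-1/2}) = \O(\sqrt Z)$, which is within the claimed error.

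It then remains to evaluate the Euler product
\[\sum_{\substack{d\ge1\\(d,m)=1}}\frac{\mu(d)}{d^2} = \prod_{p\nmid m}\Bigl(1-\frac1{p^2}\Bigr) = \frac1{\zeta(2)}\prod_{p\mid m}\Bigl(1-\frac1{p^2}\Bigr)^{-1},\]
whence the main term equals
\[\frac{Z}{\zeta(2)}\cdot\frac{\phi(m)}{m}\prod_{p\mid m}\frac1{1-p^{-2}} = \frac{Z}{\zeta(2)}\prod_{p\mid m}\frac{1-p^{-1}}{1-p^{-2}} = \frac{Z}{\zeta(2)}\prod_{p\mid m}\frac{p}{p+1} = \frac{Z\,\eta(m)}{\zeta(2)},\]
which is exactly the asserted main term, with total error $\O_\eps(\sqrt Z\,m^\eps)$.

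There is no genuine obstacle here; the argument is entirely routine. The only points demanding a little care are keeping the two coprimality conditions straight after the substitution $N = d^2 e$, checking that both error contributions (the inclusion--exclusion remainder and the tail of the completed $\mu(d)/d^2$ series) are truly of size $\O_\eps(\sqrt Z\, m^\eps)$, and confirming that the local factors telescope to $\prod_{p \mid m} p/(p+1) = \eta(m)$ rather than to some nearby product.
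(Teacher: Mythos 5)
Your proof is correct and follows essentially the same route as the paper: a double M\"obius sieve (one inversion for squarefreeness, one for coprimality to $m$), completion of the resulting $\sum \mu(d)/d^2$, and the identical Euler-product simplification to $\eta(m)/\zeta(2)$; the paper merely orders the two inversions the other way, putting the sum over $d \mid m$ outermost. All of your error estimates are valid and match the claimed $\O_\eps(\sqrt{Z}\,m^\eps)$.
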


\begin{proof}
We have
\begin{align*}
\sum_{N \leq Z} \mu^2(N) \chi(N) &= \sum_{d | m}\mu(d) \sum_{\substack{n \leq \sqrt{Z/d} \\ (n, m) = 1}} \mu(n) \left\lfloor\frac{Z}{n^2 d}\right\rfloor= \sum_{d | m}\mu(d) \sum_{\substack{n \leq \sqrt{Z/d} \\ (n, m) = 1}} \mu(n) \frac{Z}{n^2 d} + O\left( \sum_{d | m} \sqrt{Z/d}\right)\\
&=Z\sum_{d | m}\frac{\mu(d)}{d} \sum_{\substack{n \in \N \\ (n, m) = 1}} \frac{\mu(n)}{n^2}+ O\left(Z \sum_{d | m}\frac{1}{d} \left(\sqrt{\frac{d}{Z}} m^\eps \right) + \sqrt{Z} m^\eps\right)\\
&=Z\sum_{d | m}\frac{\mu(d)}{d} \sum_{\substack{n \in \N \\ (n, m) = 1}} \frac{\mu(n)}{n^2}+ O\left( \sqrt{Z} m^\eps\right).
\end{align*}
Now, 
\[\sum_{(m, n) = 1} \mu(n)/n^2 = \prod_{p \nmid m}\left(1 - \frac{1}{p^2} \right),\] and 
\[\sum_{d |m} \frac{\mu(d)}{d} = \prod_{p | m}\left( 1- \frac{1}{p}\right),\]
so their product simplifies to 
\[\frac{1}{\zeta(2)} \prod_{p |m}\frac{p-1}{p}\frac{p^2}{p^2 - 1} = \frac{\eta(m)}{\zeta(2)},\] as aimed. 
\end{proof}
We will also use a result of Burgess for an analogous bound for non-principal $\chi$:
\begin{theorem}[Burgess, \cite{Burg}]\label{burg}

Let $m \in \N$, and let $\chi$ be a non-principal quadratic character modulo $m$. Then: 
$$  \sum_{\substack{ N\leq Z}}  \mu^2(N) \chi(N) = O_{\eps} \left(\sqrt{Z} m^{3/16 + \eps} \right).$$
\end{theorem}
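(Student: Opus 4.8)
The plan is to reduce the squarefree-weighted character sum to the classical Burgess bound for complete character sums by Möbius inversion. Using $\mu^2(N)=\sum_{d^2\mid N}\mu(d)$ and writing $N=d^2n$,
\[\sum_{N\le Z}\mu^2(N)\chi(N)=\sum_{d\le\sqrt Z}\mu(d)\chi(d)^2\sum_{n\le Z/d^2}\chi(n)=\sum_{\substack{d\le\sqrt Z\\(d,m)=1}}\mu(d)\sum_{n\le Z/d^2}\chi(n),\]
since $\chi(d)^2$ equals $1$ when $(d,m)=1$ and $0$ otherwise (as $\chi$ is quadratic), and the inner sum already restricts to $n$ coprime to $m$. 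Thus it suffices to bound the complete sums $S(x):=\sum_{n\le x}\chi(n)$ for $x=Z/d^2$, uniformly in $d$.

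For $S(x)$ I would first pass to the primitive character $\chi^\ast$ modulo $m^\ast\mid m$ inducing $\chi$; since $\chi$ is non-principal, $m^\ast>1$ and $\chi^\ast$ is a genuine non-principal primitive quadratic character. Removing the coprimality condition by one further Möbius inversion gives $S(x)=\sum_{e\mid m}\mu(e)\chi^\ast(e)\sum_{n'\le x/e}\chi^\ast(n')$, and Burgess's estimate (the $r=2$ case, valid for arbitrary moduli) bounds each inner sum by $O_\eps\big((x/e)^{1/2}(m^\ast)^{3/16+\eps}\big)$. Summing over the $O_\eps(m^\eps)$ divisors $e\mid m$ yields $S(x)\ll_\eps x^{1/2}m^{3/16+\eps}$.

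It remains to insert this into the $d$-sum and truncate. For $d$ below a cutoff $D$, the Burgess bound gives a contribution $\ll_\eps m^{3/16+\eps}\sum_{d\le D}(Z/d^2)^{1/2}\ll_\eps Z^{1/2}m^{3/16+\eps}\log(2Z)$ from the harmonic sum $\sum_{d\le D}1/d$; for $D<d\le\sqrt Z$, the trivial bound $|S(Z/d^2)|\le Z/d^2$ contributes $\ll Z\sum_{d>D}d^{-2}\ll Z/D$. Choosing $D\asymp Z^{1/2}m^{-3/16}$ balances the two (and in the degenerate range $Z\ll m^{3/8}$ one simply bounds the whole sum trivially by $Z\le Z^{1/2}m^{3/16}$), giving the claimed $O_\eps(\sqrt Z\,m^{3/16+\eps})$.

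The only genuine input is the Burgess bound itself; everything else is elementary bookkeeping, so there is no real obstacle. The one place that warrants care is the reduction to the primitive character: the non-principality hypothesis is exactly what makes Burgess, rather than the trivial bound $|S(x)|\le x$, the operative estimate. The mildly unsatisfying feature is the $\log(2Z)$ produced by the harmonic sum over $d$; it is absorbed into the $m^\eps$ (equivalently into the $(XYP)^\eps$-type factors) in every application of the lemma in this paper, so it does no harm, but if desired one can instead state the bound with this logarithm made explicit.
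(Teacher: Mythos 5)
The paper does not prove this statement at all --- it is quoted as a known theorem with a citation to Burgess --- so there is no internal proof to compare against; what you have written is the standard deduction of the squarefree-weighted bound from the classical Burgess estimate, and it is essentially correct. The decomposition $\mu^2(N)=\sum_{d^2\mid N}\mu(d)$, the reduction of $\chi$ to the inducing primitive character $\chi^\ast$ (which is where non-principality is used), the application of Burgess with $r=2$, and the truncation of the $d$-sum against the trivial bound are all sound, including the degenerate range $Z\ll m^{3/8}$.

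The one point to state more carefully is the $\log(2Z)$. Your middle paragraph asserts that choosing $D\asymp Z^{1/2}m^{-3/16}$ ``gives the claimed $O_\eps(\sqrt{Z}\,m^{3/16+\eps})$,'' but the truncation only tames the large-$d$ range; the harmonic sum over $d\le D$ still contributes $\log(2D)$, so what your argument actually proves is $O_\eps\bigl(\sqrt{Z}\,m^{3/16+\eps}\log(2Z)\bigr)$. This cannot in general be absorbed into $m^\eps$ (take $m$ fixed and $Z\to\infty$), so as a free-standing statement your bound is genuinely weaker than the one quoted; the correct justification is the parenthetical one you give, namely that in every application in the paper $Z$ is polynomially bounded in terms of the other parameters and the error terms already carry $(TX)^\eps$- or $(XYP)^\eps$-type factors, so the logarithm is harmless there. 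If you want the clean statement you should either cite Burgess's own squarefree version directly or record the bound with the extra $\log(2Z)$ (equivalently, with $(mZ)^\eps$ in place of $m^\eps$), which is all that is needed downstream.
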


\subsubsection{Averages of $h(-PN)$}
By Dirichlet's class number formula, for $d > 4$, $h(-d)$ is $0$ if $-d = 2$ or $3 \Mod{4}$, and otherwise 
$$h(-d) = \frac{\sqrt{d}}{\pi} L(1, \chi_d), $$ where $L(1, \chi_d)$ is the value at $1$ of the Dirichlet series for the Kronecker symbol $\left( \frac{d}{n}\right)$ (a quadratic Dirichlet character of modulus $d$ or $4d$). We evaluate the sum
\begin{align}\label{twostars}
\frac{1}{\sqrt{PX}} \  \sideset{}{^\square}\sum_{\substack{ N \in [X, X + Y] \\ P \nmid N }} h(-PN) =\frac{1}{\pi} \  \sideset{}{^\square}\sum_{\substack{  N \in [X, X + Y] \\ PN =3 \Mod{4}\\ P \nmid N }} \sqrt{N/X} L(1, \chi_{-PN})
\end{align} 
 by truncating the Dirichlet series of the $L$-function and splitting the appearing Legendre symbols into principal and non-principal ones. 
For $\chi$ a non-principal Dirichlet character of modulus $d$, it follows from Abel summation and Polya-Vinogradov that for a truncation parameter $T$,
\begin{align}\label{truncate}
L(1, \chi) =\sum_{n \geq 1}\frac{ \chi(n)}{n} =\sum_{n = 1}^T \frac{ \chi(n)}{n} + \O\left( \sqrt{d} \log d/T \right).
\end{align}
Since $\chi_{-PN}$ is always a non-principal Dirichlet character for square-free $N$ with $P \nmid N$, we have
\begin{align*}
& \ &&\  \sideset{}{^\square}\sum_{\substack{  N \in [X, X + Y] \\ PN =3 \Mod{4}\\ P \nmid N }} \sqrt{N/X} L(1, \chi_{- P N}) 
= \  \sideset{}{^\square}\sum_{\substack{  N \in [X, X + Y] \\ PN =3 \Mod{4}\\ P \nmid N }}\sqrt{N/X}\sum_{n =1}^T \frac{  \left( \frac{-PN}{n}\right)}{n} + \O\left(\frac{Y  (PX)^{\frac{1}{2} + \eps}}{T}\right) \nonumber \\
&=&&\  \sideset{}{^\square}\sum_{\substack{  N \in [X, X + Y] \\ PN =3 \Mod{4}\\ P \nmid N }} \sum_{m =1}^{\sqrt{T}} \frac{ \sqrt{N/X} \left( \frac{-PN}{m^2}\right)}{m^2} + \  \sideset{}{^\square}\sum_{\substack{  N \in [X, X + Y] \\ PN =3 \Mod{4}\\ P \nmid N }} \sum_{\substack{ n =1\\ n \neq \square }}^T \frac{  \sqrt{N/X} \left( \frac{-PN}{n}\right)}{n}   +\O\left(\frac{Y (PX)^{\frac{1}{2} + \eps} }{T}\right) \nonumber \\
&=: &&  \mathrm{Sq} + \mathrm{NSq} +\O\left(\frac{Y (PX)^{\frac{1}{2} + \eps}}{T}\right) .
\end{align*}
Now,
\begin{align*}
\mathrm{Sq} &=  \sum_{m =1}^{\sqrt{T}} \frac{1}{m^2}\sum_{\substack{  N \in [X, X + Y] \\ PN =3 \Mod{4}\\ P \nmid N }}\mu^2(N) \left( \frac{-PN}{m^2}\right) \Big( 1 + \Big(\sqrt{1 + \frac{N - X}{X}} - 1\Big)\Big)\\
&=   \sum_{m =1}^{\sqrt{T}} \frac{1}{m^2}\sum_{\substack{  N \in [X, X + Y] \\ PN =3 \Mod{4}\\ P \nmid N }}\mu^2(N) \left( \frac{-PN}{m^2}\right)  + O\left(Y  \left(\sqrt{1 + Y/X} - 1\right) \right) \\
 &= \sum_{\substack{  m \leq \sqrt{T} \\ (P, m) =1  }}\frac{1}{m^2}  \left( \sum_{\substack{  N \in [X, X + Y]}} \mu^2(N)  \left( \frac{N}{m^2}\right) \frac{\chi_1(PN) - \chi_2(PN) }{2}\right)  + O\left(\frac{Y^2}{X} + \frac{Y}{P} + 1\right),
\end{align*}
where $\chi_{1, 2}$ are the characters modulo $4$, $\chi_1$ principal. The character $ \left( \frac{N}{m^2}\right) \chi_1(N)$ is principal modulo $2m$, and $  \left( \frac{N}{m^2}\right) \chi_2(N)$ is always non-principal modulo $4m$. Applying Lemmas \ref*{twistedsum} and Lemma \ref*{phissums},
\begin{align}\label{qmark}
\mathrm{Sq} &=  \sum_{\substack{  m \leq \sqrt{T} \\ (P, m) =1  }}\frac{Y }{\zeta(2)}\frac{\eta(2m)}{2m^2} + O_{\eps} \left(\sum_{m \leq \sqrt{T}} \frac{1}{m^2} m^{\frac{1}{5} + \eps} X^{\frac{3}{5} + \eps} \right) + O\left(\frac{Y^2}{X} +  \frac{Y}{P} + 1 \right) \nonumber \\
& = Y  \frac{4A }{11\zeta(2)} +    \O_\eps\left(\frac{Y}{P} + \frac{Y}{\sqrt{T}} +  X^{\frac{3}{5} + \eps} + \frac{Y^2}{X} \right).
\end{align}
Next, we bound the term 
\begin{align*}
\mathrm{NSq} &= \hspace{-0.2in}  \  \sideset{}{^\square}\sum_{\substack{  N \in [X, X + Y] \\ PN =3 \Mod{4}\\ P \nmid N }} \sum_{\substack{ n =1\\ n \neq \square }}^T \frac{\sqrt{N/X} \left( \frac{-PN}{n}\right)}{n}  =  \hspace{-0.2in} \  \sideset{}{^\square}\sum_{\substack{  N \in [X, X + Y] \\ PN =3 \Mod{4} }} \sum_{\substack{ n =1\\ n \neq \square }}^T \frac{\left( \frac{-PN}{n}\right)}{n} + O\left(\left(\frac{Y^2}{X} + \frac{Y}{P} + 1\right) \sum_{n \leq T} (1/n)\right)\\
& =  \sum_{\substack{ n =1\\ n \neq \square }}^T  \frac{\left( \frac{-P}{n}\right)}{n} \left( \  \sideset{}{^\square}   \sum_{\substack{  N \in [X, X + Y]  }}  \left( \frac{N}{n}\right) \frac{\chi_1(PN) - \chi_2(PN)}{2} \right) +  O\left(\frac{ Y^2 T^\eps}{X} + \frac{Y T^\eps}{P} + T^\eps\right)
\end{align*}
For $n$ not a square, $\left( \frac{N}{n}\right)$ is non-principal. Moreover, $\left( \frac{N}{2}\right)$ is primitive modulo $8$, so if the $2$-part of $\left( \frac{N}{n}\right)$ is non-principal, then so is the $2$-part of $\left( \frac{N}{n}\right) \chi_{1, 2}(N)$. Hence $\left( \frac{N}{n}\right) \chi_{1, 2}(N)$ are also non-principal, so applying Theorem \ref*{twistedsum},
\begin{align}\label{2qmark}
\mathrm{NSq} \ll_\eps X^{\frac{1}{2} + \eps} \sum_{\substack{ n =1\\ n \neq \square }}^T (1/n) n^{\frac{3}{16} + \eps}+ \frac{ Y^2 T^\eps}{X} + \frac{Y T^\eps}{P} + T^\eps  \ll_\eps (TX)^\eps \left(T^{\frac{3}{16} } X^{\frac{1}{2} } +\frac{ Y^2}{X} + \frac{Y}{P} \right).
\end{align}
Combining (\ref*{twostars}),  (\ref*{qmark}), and  (\ref*{2qmark}),
\begin{align}\label{tilda}
\frac{1}{\sqrt{PX}} \sum_{N \in [X, X + Y] }h(-PN) =  \frac{4 A}{11 \zeta(2)\pi} Y +  \mathrm{Err}_{Y, X, P, T},
\end{align} 
where 

$$\mathrm{Err}_{Y, X, P, T} \ll_\eps (PTX)^\eps\left(\frac{Y}{P} + \frac{Y}{\sqrt{T}} +  X^{\frac{1}{2}}T^{\frac{3}{16}} +\frac{ Y^2}{X} + \frac{Y(PX)^{\frac{1}{2}} }{T}\right).$$
In particular, setting $T := Y^{\frac{5}{6}} P^{\frac{5}{12}} X^{-\frac{1}{12}}$ and renormalizing (scaling by $(1/Y)\sqrt{P/X} $), we get an error term matching that of Proposition \ref*{easyaverage} (with the third term equal to the last one, and the second term disappearing as it is smaller than the $3$rd for such choice of $T$).
\subsubsection{Averages of $h(-4 PN)$}
We handle this case the same way as in the previous section. Since $-4PN$ is always $0 \Mod 4$, 
\begin{align*}
\frac{1}{\sqrt{PX}} \  \sideset{}{^\square}\sum_{\substack{  N \in [X, X + Y] \\ P \nmid N }} h(-4PN) &=\frac{2}{\pi}\  \sideset{}{^\square}\sum_{\substack{  N \in [X, X + Y] \\ P \nmid N }} \sqrt{N/X} L(1, \chi_{-4PN})\\
&=  \frac{2}{\pi} \sum_{\substack{  N \in [X, X + Y]  }}\sum_{n =1}^T \frac{  \left( \frac{-4PN}{n}\right)}{n}  + \O\left(\frac{Y (PX)^{\frac{1}{2} + \eps} }{T} + \frac{Y^2 T^\eps}{X} + \frac{Y}{P} + 1 \right).\\
\end{align*} 
Again, we can separate into principal and non-principal characters: 
\begin{align*}
\sum_{\substack{  N \in [X, X + Y]  }} \sum_{n =1}^T \frac{  \left( \frac{-4PN}{n}\right)}{n}  = \sum_{\substack{ n =1\\ n \neq \square \\ n \text{ odd } }}^T   \sum_{\substack{  N \in [X, X + Y]  }} \frac{ \left( \frac{-PN}{n}\right)}{n} +  \sum_{\substack{ m =1\\m \text{ odd } }}^{\sqrt{T}}\sum_{N \in [X, X + Y]} \frac{ \left( \frac{-PN}{m^2}\right)}{m^2}.
\end{align*}
Applying Lemma \ref*{twistedsum} and Lemma \ref*{phissums} as in the previous section, we conclude  
$$\frac{1}{\sqrt{PX}}\sum_{N \in [X, X + Y]} h(-4PN) = \frac{2 Y}{\pi\zeta(2)}  \sum_{\substack{m \text{ odd }\\ (m, P) = 1}}^{\sqrt{T}} \frac{\eta(m)}{m^2} + \mathrm{Err}_{Y, X, P, T} =Y  \frac{18A}{11\zeta(2) \pi} + \mathrm{Err}_{Y, X, P, T},$$ which finishes the proof of Proposition \ref*{easyaverage} in combination with (\ref*{tilda}).

\subsection{$H_1(r^2 N^2 - 4 P N$)}\label{hellpn}
The aim of this section is to prove Proposition \ref*{hardaverage}. We do that by establishing the following: 
\begin{prop}\label{hardaveragemid}
Let $P > 2$ be prime and let $[X, X + Y]$ be an interval of length $Y = o(X)$. Assume further that $r^2 ( X + Y) < 4 P.$
\footnote{The contribution of $r$'s with $r^2 ( X + Y) > 4 P$ will be bounded in subsection \ref*{propproof}.  } Then:
\begin{multline*}
 \sideset{}{^\square}\sum_{\substack{ N \in [X, X + Y] \\ P \nmid N }} H_1(r^2 N^2 - 4 PN) = \frac{Y B \nu(r)}{\pi\zeta(2)}  \sqrt{4 P X - r^2 X^2} \\
 + \O\big( (PX)^\eps \big( (YPX)^{\frac{3}{5}} +   Y^2 P^{\frac{1}{2}} X^{-\frac{1}{2}} + r Y^{\frac{3}{2}} X^{\frac{1}{2} } +    XP^{\frac{1}{2}} Y^{\frac{5}{18}} + Y^{\frac{8}{9}} (PX)^{\frac{1}{2}} \big)\big)
\end{multline*}
where as before, \[ B:= \prod_p \frac{p^4 - 2 p^2 - p + 1}{(p^2-1)^2}\] and $\nu(r)$ are defined as in Theorem \ref*{mainthm}.
\end{prop}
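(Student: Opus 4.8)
The plan is to follow the strategy of Subsection~\ref{pn}, the new feature being the quadratic (in $N$) argument $r^2N^2 - 4PN = N(r^2N - 4P)$ and its more intricate square-divisor structure. First, by the relation between Hurwitz and Gauss class numbers together with the square-divisor reduction already carried out in Section~2, for $N$ squarefree with $P \nmid 2N$ we have
\[
H_1(r^2N^2 - 4PN) \;=\; \sum_{d^2 \mid r^2 N - 4P} h\!\left(N(r^2N - 4P)/d^2\right) \;+\; O(1),
\]
so replacing $H_1$ by this sum over $N \in [X, X+Y]$ costs $O(Y)$, absorbed in the claimed error. By Dirichlet's class number formula, each term equals $\tfrac{1}{\pi d}\sqrt{4PN - r^2N^2}\,L(1, \chi_{-D_{N,d}})$ when $-D_{N,d} := N(r^2N-4P)/d^2 \equiv 0,1 \pmod 4$ and vanishes otherwise, $\chi_{-D_{N,d}}$ being the Kronecker symbol $\left(\tfrac{-D_{N,d}}{\cdot}\right)$. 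Since $Y = o(X)$ and, away from the excluded boundary $r^2(X+Y) = 4P$, the quantity $4PN - r^2N^2$ is essentially constant on the interval, I would replace $\sqrt{4PN - r^2N^2}$ by $\sqrt{4PX - r^2X^2}$ via a Taylor expansion, tracking a relative error of size $O(Y/X) + O\!\big(r^2 Y/(4P - r^2X)\big)$.

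After interchanging the order of summation and truncating the $L$-series at height $T$ (by Abel summation and Pólya–Vinogradov, as in \eqref{truncate}), I would split $\sum_{n \le T}\chi_{-D_{N,d}}(n)/n$ into its square part $n = m^2$ and its non-square part. On the square part, $\chi_{-D_{N,d}}(m^2) = \mathbbm{1}[\gcd(m, D_{N,d}) = 1]$, so the inner sum over $N$ becomes a count of squarefree $N \in [X, X+Y]$ subject to: the congruence $d^2 \mid r^2 N - 4P$ (pinning $N$ to essentially one class modulo $d^2$), the local conditions $q \nmid N(4P - r^2N)$ for each $q \mid m$, and the mod-$8$ condition ensuring non-vanishing of the class number. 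A short-interval squarefree sieve (the count version of Lemma~\ref{twistedsum}) evaluates this as $\tfrac{Y}{\zeta(2)\,d^2}$ times a product of local densities, plus a sieve error. Performing the remaining sums over $m$ and $d$ assembles the Euler products: at primes $p \nmid r$ the local factors combine into $B$, while at primes $p \mid r$ the two forbidden residues $N \equiv 0$ and $N \equiv 4Pr^{-2} \pmod p$ merge into one, changing the local factor by precisely the factor occurring in $\nu(r)$; together with the $\tfrac1\pi$ from the class number formula this produces the main term $\tfrac{YB\nu(r)}{\pi\zeta(2)}\sqrt{4PX - r^2X^2}$.

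The non-square part is where the argument genuinely diverges from Subsection~\ref{pn}. For $n$ coprime to $d$ one has $\chi_{-D_{N,d}}(n) = \left(\tfrac{f_r(N)}{n}\right)$ with $f_r(N) = r^2N^2 - 4PN$, and because $f_r$ is a \emph{quadratic} rather than a linear polynomial in $N$, the inner sum $\sum_{N}^{\square}\left(\tfrac{f_r(N)}{n}\right)$ over the short interval and residue class cannot be handled by Burgess for a Dirichlet character; it instead requires Weil's bound for character sums with polynomial argument. I would factor $n = n_b n_g$, where $n_b$ is supported on the bad primes $\{2, P\} \cup \{p \mid r\}$ — exactly the primes at which the two roots $0$ and $4P/r^2$ of $f_r$ collide, so that $f_r \bmod p$ becomes a perfect square and the local character sum has size $p$ rather than $\sqrt p$ — and $n_g$ is coprime to $2Pr$. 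Since $P$ is a single prime, $n_b$ contains only a bounded power of $P$, and the $n$ with $n_g = 1$ number $O\!\big((\log T)^2 d(r)\big)$ and contribute trivially. For $n_g > 1$ I would remove the squarefree constraint via $\mu^2(N) = \sum_{e^2 \mid N}\mu(e)$, complete the $N$-sum modulo $[\,n, d^2, e^2\,]$, and apply the bound $\sum_{N \bmod q}\left(\tfrac{f_r(N)}{q}\right) e(bN/q) \ll \sqrt q$ multiplicatively over the good primes; this yields square-root cancellation, and after summing over $n \le T$, $d$, $e$ and optimizing $T$ against the $Y\sqrt{PX}/T$ truncation error and the squarefree-sieve errors, one obtains the stated error terms.

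I expect the main obstacle to be this non-square estimate: one must isolate and separately treat the prime $2$, the prime $P$ (with only a bounded power $P^j \le T$ occurring), and the divisors of $r$ (at which $f_r$ degenerates), then propagate these through the completion, the interaction with the square-divisor modulus $d^2$, and the squarefree sieve, all while keeping the $r$-dependence explicit enough to produce the $r Y^{3/2}X^{1/2}$ term. A secondary technical point is the near-boundary regime in which $4P - r^2X$ is small and the Taylor expansion of $\sqrt{4PN - r^2N^2}$ loses its saving — which is precisely why the hypothesis $r^2(X+Y) < 4P$ is imposed here and the remaining $r$ are deferred to the next subsection.
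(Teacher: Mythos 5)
Your opening reduction (Hurwitz to Gauss class numbers, the class number formula, freezing $\sqrt{4PN - r^2N^2}$ at $\sqrt{4PX - r^2X^2}$, truncating the $L$-series at $T$) matches the paper, and your instinct that the quadratic argument forces Weil-type complete sums in place of Burgess is the right one for the \emph{tail}. But your central decomposition --- main term from $n = m^2$ only, all non-square $n$ into the error --- is exactly the step the paper warns cannot be carried over from Subsection \ref{pn}, and it genuinely fails here. For non-square $n$ coprime to $2Pr$, the map $N \mapsto \left(\tfrac{N(r^2N-4P)}{n}\right)$ is a non-principal character composed with a quadratic polynomial having two \emph{distinct} roots, so its mean over a period is not zero: the complete sum is the paper's $\theta_r(n)$, with $\theta_r(p) = -1$, $\theta_r(p^3) = -p^2$, and so on. When you complete the $N$-sum, Weil controls only the nonzero frequencies; the zero frequency contributes about $Y\,\theta_r(n)/n$ to the incomplete sum, and $\sum_{n \neq \square,\, n \le T} \tfrac{1}{n}\cdot\tfrac{Y\,\theta_r(n)}{n}$ is of order $Y$ --- the same order as the normalized main term. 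Concretely, in the Euler product of Lemma \ref{worstcomputation} the local factor at a good prime is built from \emph{both} $\sum_{\alpha}\theta_r(p^{2\alpha+2})/p^{4\alpha+4}$ (your square part) \emph{and} $\sum_{\alpha}\theta_r(p^{2\alpha+1})/p^{4\alpha+2} = -1/(p^2-1)$ (the non-square part you discard); dropping the latter yields a constant different from $B\nu(r)$. So your claim that the square part alone assembles $B\nu(r)$ is false, and the non-square part is not an error term.

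The paper avoids this by not splitting on squareness at all for the main term: for every $n \le Y^{\sigma}$ it evaluates $\mathcal{S}_{d,n,r}$ asymptotically using equidistribution of squarefree integers in residue classes modulo $d^2 n$ (Hooley, Theorem \ref{idealtheorem}), so the complete character sum $\phi^o_{r,d}(g)\,\theta_r(n')$ enters the main term explicitly, and the full Euler product $B\nu(r)$ is assembled in Lemma \ref{worstcomputation} including the odd-prime-power (non-square) contributions. Only the range $n \in [Y^{\sigma}, T]$ is treated as pure error, via Poisson summation (Lemma \ref{shortintchar}) combined with the complete-sum bounds of Lemma \ref{characterbd}; even there one must separately control the zero-frequency term $Y\mathfrak{n}_n/(nd^2)$, which is what produces the $Y^{1-\sigma/2}$ loss. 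To salvage your plan you would have to retain the zero-frequency contributions of all non-square $n$ up to some height and fold them into the main term before estimating the remainder --- at which point you have essentially reconstructed the paper's small-$n$/large-$n$ split.
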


We prove this in subsections \ref*{rem} - \ref*{propproof}.

Subsection \ref*{rem} reduces the sum of Hurwitz class numbers to Gauss class numbers. In subsection \ref*{smf2}, we establish properties of multiplicative functions that arise in evaluating the sum\footnote{he computations of that section, especially Lemma \ref*{worstcomputation} involve combinatorial case work and routine Euler product summations, and are not particularly enlightning.} In subsection \ref*{truncation} we truncate the sum on $n$, reducing the question evaluating an expression of the form 
\[ \sum_{n \leq T, d }  \sum_{N}\mu^2(N)\left(\frac{N}{n}\right)  \left(\frac{(r^2N-4P)/d^2}{n}\right)/nd\]
where the sum on $N$ is restricted to certain congruence classes.

Since $\sum_{a \Mod m} \chi(Q(a))$ for a quadratic polynomial $Q$ and a non-principal $\chi$ modulo $m$ is not necessarily zero, we cannot split sums into principal and non-principal characters as in the previous section to evaluate this. Instead, our strategy is as follows:
\begin{itemize}
\item For $n \ll Y^\sigma$ for some $0 < \sigma < 1$, we compute the inner sum explicitly by exploiting equidistribution of square-free numbers in remainder classes modulo $d^2n$. This is Proposition \ref*{smalldn} proved in subsection \ref*{smalln}.
\item for $n \gg Y^\sigma$,  we upper bound the sum using Poisson summation.  This is Proposition \ref*{smalldlargem} proved in subsection \ref*{largen}.
\end{itemize}
In subsection \ref*{propproof}, we deduce Proposition \ref*{hardaveragemid}. Finally, in subsection \ref*{propdeduction}, we deduce Proposition \ref*{hardaverage} from Proposition \ref*{hardaveragemid}.

Throughout the following subsections, we assume $X, Y, P, T,$ and $r$ satisfy the assumptions of Proposition \ref*{hardaveragemid}.

\subsubsection{Remainder analysis}\label{rem}
For a divisor $d^2 | r^2 N - 4 P $ such that $\frac{r^2 N^2 - 4 P N}{d^2} = 0/1 \Mod 4$, we have by the class number formula that 
\[h\left(\frac{r^2N^2 - 4PN}{d^2}\right) = \frac{\sqrt{4 P N - r^2N^2}}{\pi d} L(1, \chi_{\frac{r^2N^2 - 4PN}{d^2}}).\] 
Thus, for $1 \leq r \leq2 \sqrt{P/(X + Y)}$,
\begin{align*} \sideset{}{^\square}\sum_{\substack{ N \in [X, X + Y] \\ P \nmid N }} H_1(r^2 N^2 - 4 PN) =  \sum_{\substack{ d^2 \leq 4 P  }}\sum_{\substack{ N \in [X, X + Y]  \\ N \in \widetilde{\mathcal{A}}_{r, d} }} \frac{L(1, \chi_{(r^2N^2 - 4PN)/d^2})}{\pi d}  \sqrt{4 P N - r^2N^2}, 
\end{align*}
where we define
\[\widetilde{\mathcal{A}}_{k,d} := \{ N \in \Z : N \sqf, P \nmid N, d^2 | r^2 N - 4P, \text{ and }(r^2 N^2 - 4 P N)/d^2 \equiv 0/1 \Mod 4 \}\] for $d^2 \leq 4 P$ and $1 \leq r \leq2 \sqrt{P/(X + Y)}$.  
In this section we analyze the set $\widetilde{\mathcal{A}}_{r, d}$.

Suppose first that $r$ is odd. For $N$ square-free, $d^2 | r^2N - 4 P$ implies that $d$ is odd, and that $\frac{r^2N^2 - 4PN}{d^2} \equiv 0/1 \Mod 4$ always holds. Thus, $\widetilde{\mathcal{A}}_{r, d}$ is the set of square-free integer solutions to the congruence \[r^2 N \equiv 4 P \Mod d^2, P \nmid N.\] For odd $r$ and $d$, this has a solution if and only if $P \nmid d$ and $(r, d) = 1$. Note that $d^2 \leq 4P$, so $(d, P) = 1$, and hence for such $d$,

\[ \widetilde{\mathcal{A}}_{r, d}= \{N \in  \mathcal{A}_{r, d}: N \sqf, P \nmid N\},\] where we let
\[ \mathcal{A}_{r, d}:= \begin{cases} n \in \Z : n \equiv 4 P r^{-2} \Mod d^2 &\text{ if } (d, r)  = (d, 2)  = 1; \\ \emptyset & \text{ otherwise} \end{cases}\] 
for odd $r$.

Now assume $r$ is even. From $d^2 |4P -  r^2 N < 4 P$ and since $r^2 X < 4 P$, we know $r, d < P$, i.e., $(P, r) = (P, d) = 1.$ Let $r:= 2 l$. Then $\frac{r^2 N^2 - 4 P N}{d^2}\equiv 0/1 \Mod 4$ is equivalent to the existance of some $t \in \Z$, $t N \equiv 0/1 \Mod 4$, and
\begin{align} \label{eqn}
4l^2 N \equiv 4P + t d^2 \Mod 4 d^2, P \nmid N.
\end{align}
If $d$ is odd, reducing (\ref*{eqn}) modulo $4$ shows that $t \equiv 0 \Mod 4$, and (\ref*{eqn}) is equivalent to \[l^2 N \equiv P \Mod d^2, P \nmid N.\] 
This has $1$ solution $\Mod d^2$ for $(l, d) = 1$ and no solutions otherwise.
Suppose now $d = 2b$, so (\ref*{eqn}) becomes 
\begin{align}\label{equat}
l^2 N \equiv P + t b^2 \Mod 4 b^2, tN \equiv 0/1\Mod 4, P \nmid N.
\end{align}
Since we restrict to $N$ square-free, we can disregard the case $N \equiv 0 \Mod 4$. If $N = 2 \Mod 4$, then the second condition forces that $t$ is even and (\ref*{equat}) has no solutions $\Mod 2$. For $N$ odd, $t N = 0/1 \Mod 4$ holds if and only if $t = 0/N \Mod 4$, and we have an equivalence
$$(\ref*{equat}) \ \& \  N   \text{ is odd } \iff \begin{sqcases} N(l^2 - b^2) \equiv P \Mod 4b^2, N \text { odd}, P \nmid N  \\ N l^2 \equiv P \Mod 4b^2, N \text{ odd}, P \nmid N. \end{sqcases}$$
If $(l, b) > 1$, this has no solutions since as we have previously noted, $P \nmid b$. Otherwise, if $(l, b) = 1$, there are three cases:

\begin{itemize}
\item if $l, b$ are odd, there is a solution $N \equiv P l^{-2} \Mod 4 b^2;$
\item if $l$ is even, $b$ is odd, there is a solution $N \equiv P (l^2 - b^2)^{-1} \Mod 4 b^2;$
\item if $l$ is odd, $b$ is even, there's a solution $N \equiv P l^{-2} \Mod 4 b^2$ and a solution $N \equiv P(l^2 - b^2)^{-1} \Mod 4 b^2$, distinct. 
\end{itemize}
Note all these congruences alone imply $N$ is odd.
In summary, for any choice of $r$ and $d$, 
\[\widetilde{\mathcal{A}}_{r, d}:= \{ N \in \mathcal{A}_{r, d}: P \nmid N, N \sqf\}, \] where the set $\mathcal{A}_{r, d}$ is given by a congruence condition modulo $d^2$; namely,
\[\mathcal{A}_{r, d}:= \{ n \in \Z : n \Mod d^2 \in \mathcal{R}_{d, r},\}\] where $\mathcal{R}_{d, r}$ is a subset of remainders mod $d^2$ coprime to $d$ that satisfies:

\[\abs{\mathcal{R}_{d, r}} = \begin{cases} 1 &\text{ if } (d, r) = 1, 2 \nmid rd; \\ 
 1 &\text{ if } (d, r) = 1, 2 |r; \\ 
1 &\text{ if } (d, r) = 2, 2 || d; \\
 2 &\text{ if } (d, r) = 2, 4 | d; \\ \emptyset & \text{ otherwise}.  \end{cases}\]

Furthermore, letting $s : = \frac{r^2 N^2 - 4 PN}{d^2}$ for some $N \in \widetilde{\mathcal{A}}_{r, d}$, the above analysis also implies that:
\begin{itemize}\label{parityofs}
\item For $(d, r) = 2, 2 || d, 2 ||r,$ $s$ is always even;
\item For $(d, r) = 2, 2 || d, 4 |r,$  $s$ is always odd; 
\item For $(d, r) = 2, 4 | d$, the two remainders in $\mathcal{R}_{r, d}$ produce $s$ of different parity.
\end{itemize}

We will call a pair $(r, d)$ \emph{admissible} if $\mathcal{R}_{r, d}$ is non-empty.

\subsubsection{Some Multiplicative Functions, II}\label{smf2}
\begin{defn}\label{multfuncdefn}
Let $\phi(m)$ be Euler's function and ${\psi}(m)$ be the Dedekind Psi function. As previously, we let $$\eta(m):= \frac{m}{\psi(m)} = \prod_{p | m}\frac{p}{p + 1}.$$ For any $r, m \in \N$ with $\upsilon_2(m) \neq 1, 2$ and a prime $P$, let 
\[ \theta_r(m) := \sum_{a \Mod m} \left(\frac{a}{m} \right) \left(\frac{ar^2 - 4P}{m} \right).\]
For an admissible pair $(r, d)$ and $g | d^\infty$, let
\[\phi^o_{r, d}(g) := \sum_{\substack{ a \Mod d^2 g \\ a  \Mod d^2 \in \mathcal{R}_{r, d}}}  \left(\frac{a}{g}\right) \left(\frac{(r^2a-4P)/d^2}{g}\right)\] where $\mathcal{R}_{r, d}$ is the set of remainders $\Mod d^2$ defined in the previous section, $d,g, r \in \N$, $\upsilon_2(g) \neq 1, 2$.
\end{defn}

\begin{lemma}\label{thetalemma}
Let $m \in \N$ with $\upsilon_2(m) \neq 1, 2$ and a prime $P \neq 2$ with $(m, P) = 1$, $ \theta_r(m)$ is a multiplicative function of $m$.
For odd $r$, it is given as follows. For a prime $p$ with $(p, 2r) = 1$,

$$\theta_r(p^\alpha)  = - p^{\alpha - 1} \text{ for }\alpha \text{ odd};\ \  \theta_r(p^\alpha)  = p^{\alpha - 1}(p-2) \text{ for } \alpha \text{ even}.$$
For $p |r,$
$$\theta_r(p^\alpha)  = 0\text{ for }\alpha \text{ odd};\ \  \theta_r(p^\alpha)  = p^{\alpha - 1}(p-1) \text{ for } \alpha \text{ even};$$
For $p = 2$, $$\theta_r(2^\alpha) := (-1)^\alpha 2^{\alpha - 1}.$$
For even $r$, $$\theta_r(2^\alpha)= 0$$ for any $\alpha \geq 1$; for an odd prime $p$, $\theta_r(p^\alpha)= \theta_{r'}(p^\alpha),$ where $r'$ is the odd part of $r$. 
\end{lemma}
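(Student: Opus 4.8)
The statement to prove is Lemma~\ref{thetalemma}, which computes the multiplicative function $\theta_r(m) = \sum_{a \Mod m} \left(\frac{a}{m}\right)\left(\frac{ar^2 - 4P}{m}\right)$.

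The plan is to first establish multiplicativity in $m$, then reduce to prime powers, and finally evaluate the local factors by a case analysis on whether $p \mid r$, $p \nmid 2r$, or $p = 2$.

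\textbf{Multiplicativity.} For $(m_1, m_2) = 1$ with both $m_i$ having $2$-adic valuation not equal to $1, 2$, I would use the Chinese Remainder Theorem: as $a$ runs over residues mod $m_1 m_2$, it corresponds bijectively to pairs $(a_1, a_2)$ with $a_i$ mod $m_i$, and the Jacobi symbol factors as $\left(\frac{a}{m_1 m_2}\right) = \left(\frac{a}{m_1}\right)\left(\frac{a}{m_2}\right)$ (and similarly for $\left(\frac{ar^2 - 4P}{m_1 m_2}\right)$). One has to be slightly careful with the $2$-part since the Kronecker symbol $\left(\frac{\cdot}{2}\right)$ is only well-defined/periodic with modulus $8$, which is exactly why the hypothesis $\upsilon_2(m) \neq 1, 2$ is imposed — it guarantees the relevant symbols are genuine characters of the stated modulus. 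So $\theta_r(m_1 m_2) = \theta_r(m_1)\theta_r(m_2)$, reducing everything to prime powers.

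\textbf{Odd prime powers, $p \nmid 2r$.} Here I would substitute $a \mapsto r^{-2} a$ (valid since $(r, p) = 1$), which transforms the sum into $\left(\frac{r^{-2}}{p^\alpha}\right)^2 \sum_a \left(\frac{a}{p^\alpha}\right)\left(\frac{a - 4P}{p^\alpha}\right) = \sum_a \left(\frac{a(a-4P)}{p^\alpha}\right)$, i.e.\ the standard character sum of a quadratic polynomial $a^2 - 4Pa$ whose discriminant $16P^2$ is a unit mod $p$. Then I would split the sum according to $\upsilon_p(a)$: for $a$ a unit (and $a - 4P$ a unit, which fails for at most one residue class) the sum over the top layer is a Jacobi-symbol character sum that can be evaluated, and the contributions from $p \mid a$ are handled by an inductive/recursive relation $\theta(p^\alpha)$ in terms of $\theta(p^{\alpha - 2})$ after factoring out $p^2$. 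This yields the claimed $-p^{\alpha - 1}$ for $\alpha$ odd and $p^{\alpha-1}(p - 2)$ for $\alpha$ even. The cleanest route is probably to note that for a quadratic $Q$ with non-square-discriminant-condition or squarefree part, $\sum_{a \Mod p^\alpha} \left(\frac{Q(a)}{p^\alpha}\right)$ is a classical evaluation; one just has to track the two distinct roots of $a(a - 4P)$ mod $p$.

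\textbf{The remaining cases.} For $p \mid r$ (odd $p$), $ar^2 \equiv 0 \Mod p$ for the "generic" $a$, so $\left(\frac{ar^2 - 4P}{p^\alpha}\right)$ is essentially $\left(\frac{-4P}{p^\alpha}\right)$ when $p \nmid a$ contributes, and the sum collapses: the key observation is that $\sum_{a : p \nmid a}\left(\frac{a}{p^\alpha}\right) = 0$ when $\alpha$ is odd (non-principal character) and $= p^{\alpha-1}(p-1)$-ish when $\alpha$ is even (it becomes principal), then layering on $p \mid a$ recursively. For $p = 2$ (odd $r$) one computes $\theta_r(2^\alpha)$ directly for small $\alpha$ and then uses multiplicativity-in-$\alpha$-step / the recursion modulo $8$-periodicity; since $r$ is odd, $ar^2 - 4P$ has a controlled $2$-adic structure. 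For even $r$, $ar^2 - 4P$ is always even while the constraint forces $\left(\frac{a}{2^\alpha}\right)$ issues; the quick argument is that $ar^2 - 4P \equiv 0 \Mod 4$ makes $\left(\frac{ar^2-4P}{2^\alpha}\right) = 0$ for $\alpha \geq 1$ whenever $4 \mid ar^2 - 4P$, forcing $\theta_r(2^\alpha) = 0$; and for odd $p$, $p \nmid 2$ so $p^2$-factors of $r$ are irrelevant to the symbol, giving $\theta_r(p^\alpha) = \theta_{r'}(p^\alpha)$ with $r'$ the odd part.

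\textbf{Main obstacle.} The genuinely fiddly part is the $2$-adic bookkeeping — making the Kronecker symbol $\left(\frac{\cdot}{2^\alpha}\right)$ behave and correctly evaluating $\theta_r(2^\alpha)$ for odd $r$, where one cannot simply complete the square mod $2$. Everything else reduces to standard evaluations of Jacobi-symbol sums of quadratic polynomials over $\Z/p^\alpha\Z$ plus an induction on $\alpha$; the odd-prime cases are routine once the substitution $a \mapsto r^{-2}a$ is made. So I expect to spend most of the effort pinning down the $p = 2$ local factor and verifying the parity-dependent formula $(-1)^\alpha 2^{\alpha-1}$ by an explicit short computation for $\alpha = 1, 2, 3$ and a recursion thereafter.
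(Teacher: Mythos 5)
Your outline matches the paper's proof: multiplicativity via the Chinese Remainder Theorem and multiplicativity of the Kronecker symbol, followed by a case analysis at prime powers. One correction that eliminates most of the work you anticipate: for odd $p$ the symbol $\left(\frac{\cdot}{p^\alpha}\right)=\left(\frac{\cdot}{p}\right)^{\alpha}$ is a genuine character modulo $p$ — it is $p$-periodic and vanishes on multiples of $p$ — so there is no "layer $p\mid a$" to treat and no recursion from $\theta_r(p^\alpha)$ down to $\theta_r(p^{\alpha-2})$; the sum over $a \Mod{p^\alpha}$ is simply $p^{\alpha-1}$ times the sum over $a \Mod{p}$, which for $(p,2r)=1$ is the classical evaluation of $\sum_{a}\left(\frac{a}{p}\right)^{\alpha}\left(\frac{a-4Pr^{-2}}{p}\right)^{\alpha}$, giving $-1$ for $\alpha$ odd and $p-2$ for $\alpha$ even, and for $p\mid r$ collapses to $\left(\frac{-4P}{p}\right)^{\alpha}\sum_a\left(\frac{a}{p}\right)^{\alpha}$. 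The same remark disposes of the $p=2$ case you flag as the main obstacle: for $r$ odd and $\alpha\geq 3$ the summand is $8$-periodic, and since $4Pr^{-2}\equiv 4 \Mod{8}$ and $\left(\frac{a+4}{2}\right)=-\left(\frac{a}{2}\right)$, it equals $(-1)^{\alpha}$ on odd $a$ and $0$ on even $a$, yielding $(-1)^{\alpha}2^{\alpha-1}$ in two lines with no induction on $\alpha$. With that observation your plan goes through exactly as in the paper.
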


\begin{proof}
The multiplicativity of the function follows immediately from the chinese remainder theorem and the multiplicativity of characters.

Suppose $m = p^\alpha$, $(p, 2Pr) = 1$, where $r$ can be even or odd. Then
\begin{multline*}
 \sum_{a \Mod m} \left(\frac{a}{m} \right) \left(\frac{r^2a - 4P}{m} \right) =  p^{\alpha - 1} \sum_{a \Mod p} \left(\frac{a}{p} \right)^\alpha \left(\frac{a - 4Pr^{-2}}{p} \right)^{\alpha}=  p^{\alpha - 1}  \sum_{a \in (\Z/p)^*}\left(\frac{1 - 4Pr^{-2} a^{-1}}{p} \right)^{\alpha}.
\end{multline*}
The value of $1 - 4Pr^{-2} a^{-1}$ for $a \in (\Z/p)^*$ is all the entries $\Mod p$ except for $1$. Hence, the sum is $-1$ for $\alpha$ odd and $p-2$ for $\alpha$ even.

Suppose now $p| r$. Then 
\[ \sum_{a \Mod m} \left(\frac{a}{m} \right) \left(\frac{r^2a - 4P}{m} \right) =  p^{\alpha - 1}  \left(\frac{-4P}{p} \right)^{\alpha} \sum_{a \Mod p} \left(\frac{a}{p} \right)^\alpha.\] The inner sum vanishes when $\alpha$ is odd and is $p-1$ when $\alpha$ is even.

Suppose now $p = 2$, $r$ is odd (so $\alpha \geq 3$).
\begin{multline*}
 \sum_{a \Mod m} \left(\frac{a}{m} \right) \left(\frac{r^2a - 4P}{m} \right) =2^{\alpha - 3} \sum_{a \Mod 8} \left(\frac{a}{2} \right)^{\alpha} \left(\frac{a - 4Pr^{-2}}{2} \right)^{\alpha}\\= 2^{\alpha - 3} \sum_{a \Mod 8} \left(\frac{a}{2} \right)^{\alpha} \left(- \frac{a}{2}\right)^{\alpha}= 2^{\alpha - 3} \sum_{a \in (\Z/8)^*}(-1)^\alpha.
\end{multline*}

Finally, if $p = 2$ and $2 | r$, the sum obviously vanishes.
\end{proof}

\begin{lemma}\label{tildephilemma}
Let $d, g, r \in \N$ be such that $g | d^\infty$. Then: 
$$\phi^o_{r, d}(g) = \begin{cases} \phi(g) \delta_{g = \square}&\text{ if } 2 \nmid d\\ \phi(g) \delta_{g = \square}&\text{ if } 2 || d, 2 \nmid g \\ 0 &\text{ if }  2||d, 2 | g, 2 ||r\\ 2 \phi(g)\delta_{g = \square} &\text{ if }  2|| d, 2 | g,4 |r \\  2 \phi(g) \delta_{g = \square} &\text{ if }  4 | d\end{cases}.$$
\end{lemma}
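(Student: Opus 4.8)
The plan is to factor $\phi^o_{r,d}(g)$ over the primes dividing $d$ (legitimate since $g\mid d^\infty$) and evaluate each local factor as a short character sum. First note that, since the Jacobi/Kronecker symbol is completely multiplicative in its numerator, the summand of $\phi^o_{r,d}(g)$ equals $\left(\tfrac a g\right)\left(\tfrac{(r^2a-4P)/d^2}g\right)=\left(\tfrac s g\right)$ with $s:=(r^2a^2-4Pa)/d^2$, the integer whose parity (for $a\in\mathcal R_{r,d}$) is recorded at the end of subsection~\ref{rem}. By admissibility of $(r,d)$ we have $(d,r)\in\{1,2\}$, and by that same analysis $\mathcal R_{r,d}$ is a union of $|\mathcal R_{r,d}|\in\{1,2\}$ residue classes mod $d^2$, all coprime to $d$, which agree modulo the odd part of $d^2$; two classes occur exactly when $4\mid d$, in which case the explicit congruences show the two classes coincide away from the prime $2$ and differ only $2$-adically. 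Writing $d=\prod_{p\mid d}p^{\delta_p}$ and $g=\prod_{p\mid d}p^{\gamma_p}$, the Chinese Remainder Theorem gives $\phi^o_{r,d}(g)=\prod_{p\mid d}\Sigma_p$, where $\Sigma_p:=\sum\left(\tfrac s{p^{\gamma_p}}\right)$ summed over $a\bmod p^{2\delta_p+\gamma_p}$ reducing mod $p^{2\delta_p}$ to one of the ($p$-local) base residues. Since $d^2\le 4P$ we have $p\ne P$ for every $p\mid d$, and admissibility forces $p\nmid r$ for odd $p\mid d$.

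For an odd prime $p\mid d$ there is a single base residue $a_0$; set $a=a_0+d^2b$ and $c:=s_0+r^2b$ with $s_0:=(r^2a_0-4P)/d^2$. Using $a_0+d^2b=r^{-2}(4P+d^2c)$ and $s=(a_0+d^2b)(s_0+r^2b)$ one gets $s=r^{-2}c(4P+d^2c)$, so $\left(\tfrac s{p^{\gamma_p}}\right)=\left(\tfrac{c(4P+d^2c)}{p^{\gamma_p}}\right)$, the square $r^{-2}$ having symbol $1$. As $b$ runs over $\Z/p^{\gamma_p}$ so does $c$; and since $\left(\tfrac\bullet{p^{\gamma_p}}\right)=\left(\tfrac\bullet p\right)^{\gamma_p}$ depends only on its argument mod $p$ while $p\mid d$ forces $4P+d^2c\equiv 4P\pmod p$, we obtain $\left(\tfrac{c(4P+d^2c)}{p^{\gamma_p}}\right)=\left(\tfrac{4P}p\right)^{\gamma_p}\left(\tfrac c p\right)^{\gamma_p}$. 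Hence
\[\Sigma_p=\Big(\tfrac{4P}p\Big)^{\gamma_p}\sum_{c\bmod p^{\gamma_p}}\Big(\tfrac c p\Big)^{\gamma_p}=\begin{cases}\phi(p^{\gamma_p})&\text{if }\gamma_p\text{ is even},\\0&\text{if }\gamma_p\text{ is odd},\end{cases}\]
since $\left(\tfrac{4P}p\right)^{\gamma_p}=1$ and $\left(\tfrac c p\right)^{\gamma_p}$ is the indicator of $p\nmid c$ when $\gamma_p$ is even. Multiplying over odd $p\mid d$ produces a factor $\phi(g')\,\delta_{g'=\square}$, where $g'$ is the odd part of $g$; as there is no prime-$2$ contribution, this settles the cases $2\nmid d$ and $2\| d,\ 2\nmid g$.

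It remains to evaluate $\Sigma_2$ when $2\mid d$, which is the heart of the lemma. If $2\nmid g$ then $\gamma_2=0$ and $\Sigma_2$ is a trivial sum equal to the number $|\mathcal R_{r,d,2}|$ of $2$-local base residues, which is $1$ for $2\| d$ and $2$ for $4\mid d$; together with the odd-part factor this gives $\phi(g)\delta_{g=\square}$ in the case $2\| d,\ 2\nmid g$ and $2\phi(g)\delta_{g=\square}$ in the case $4\mid d$ with $g$ odd. If $2\mid g$ then $\gamma_2\ge 3$ by the standing hypothesis $\upsilon_2(g)\ne 1,2$, and the Kronecker symbol power $\left(\tfrac\bullet 2\right)^{\gamma_2}$ vanishes on even arguments and equals $1$ on odd arguments precisely when $\gamma_2$ is even. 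For each $2$-local base residue the integer $s$ has a fixed parity (exactly the content of subsection~\ref{rem}): when $s$ is always even the symbol $\left(\tfrac s{2^{\gamma_2}}\right)$ vanishes identically and that residue contributes $0$; when $s$ is always odd, one checks that as $a$ ranges over the $2^{\gamma_2}$ lifts of the base residue the value $\left(\tfrac s{2^{\gamma_2}}\right)$ is two-valued — controlled by one further $2$-adic digit of $a$ — with the two values cancelling when $\gamma_2$ is odd and both equal to $1$ when $\gamma_2$ is even, so that residue contributes $2^{\gamma_2}$ for $\gamma_2$ even and $0$ otherwise. Feeding in the recorded parities — $s$ always even when $2\| d,\ 2\| r$; always odd when $2\| d,\ 4\mid r$; of opposite parities on the two residues when $4\mid d$ — one gets $\Sigma_2=0$ in the first case, and $\Sigma_2=2^{\gamma_2}=2\phi(2^{\gamma_2})$ (present only when $\gamma_2$ is even, else $0$) in the other two; since $\phi(2^{\gamma_2})\phi(g')=\phi(g)$ and $g$ is a perfect square exactly when $g'$ is a square and $\gamma_2$ is even, multiplying by $\phi(g')\delta_{g'=\square}$ yields the remaining three values.

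The only genuinely delicate step is the prime-$2$ bookkeeping, where one has to keep straight the number of base residues, the parity of $s$ attached to each, and the behaviour of the power $\left(\tfrac\bullet 2\right)^{\gamma_2}$ of the Kronecker symbol; but every verification reduces to a finite sum of $\pm1$'s over a handful of residue classes, so it requires nothing beyond elementary counting.
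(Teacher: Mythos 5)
Your proof is correct and follows essentially the same route as the paper's: you split off the odd part of $g$ (where the local sum over a full period of the residue forces $\phi\cdot\delta_{\square}$, exactly as in the paper's reduction via $g=2^{\alpha}h$ with $\mathrm{rad}\,h\mid d^2$) and then do the same case analysis at $p=2$ driven by the parities of $s$ recorded at the end of the remainder-analysis subsection; merging the two Kronecker symbols into one via complete multiplicativity and phrasing the odd part as a CRT product with a change of variables is only a cosmetic variant. The one step you assert rather than compute --- that for odd $s$ the symbol $\left(\tfrac{s}{2^{\gamma_2}}\right)$ is two-valued in one further $2$-adic digit of $a$ and cancels exactly when $\gamma_2$ is odd --- is precisely the paper's use of $\left(\tfrac{x+4}{2}\right)=-\left(\tfrac{x}{2}\right)$, and your claimed values check out.
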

\begin{proof}
If $(d, r)$ is non-admissible, $\phi^o_{r, d}(g) = 0$, so assume it is an admissible pair. Let $t$ be an integer that reduces to an element of $\mathcal{R}_{r, d}$ modulo $d^2$, let $s:= (r^2 t-4P)/d^2$, and let $g =: 2^{\alpha }h$, where $(h, 2) = 1$. Since $h$ is odd, $\left(\frac{\cdot}{h}\right) $ is a character modulo the radical $\mathrm{rad} \ h$, and since $\mathrm{rad} \ h | d^2$, we have
\begin{align*}
\sum_{\substack{ a \Mod d^2 g \\ a \equiv t \Mod d^2}}  \left(\frac{a}{g}\right) \left(\frac{(r^2a-4P)/d^2}{g}\right) &=\sum_{\substack{ v = 1}}^g  \left(\frac{t + v d^2}{g}\right) \left(\frac{s + v r^2}{g}\right)\\
&= \sum_{v \Mod h}  \left(\frac{t}{h}\right) \left(\frac{s+ vt^2}{h}\right)  \sum_{t \Mod 2^\alpha} \left(\frac{t + v d^2}{2^
{\alpha}}\right) \left(\frac{s+ vt^2}{2^{\alpha}}\right)\\
&= \delta_{h = \square}\phi(h)\sum_{t \Mod 2^\alpha}  \left(\frac{t + v d^2}{2^{\alpha}}\right) \left(\frac{s+ vr^2}{2^{\alpha}}\right).
\end{align*} where in the last step we used that $(h, t) = 1$ because $(r, d) \leq 2$ by assumption on admissible pairs, and $h | d^\infty$.
We compute \[\star:= \sum_{t \Mod 2^\alpha}  \left(\frac{t + v d^2}{2^{\alpha}}\right) \left(\frac{s+ vr^2}{2^{\alpha}}\right)\] case by case.

\begin{enumerate}
\item If $\alpha = 0$, i.e., $2 \nmid g$, then $\star = 1$, so $\phi^o(g) = 2\delta_{g = \square} \phi(g)$ if $4 | d$ and $\delta_{g = \square} \phi(g)$ otherwise (because the sizes of $\mathcal{R}_{r, d}$ for such $d$ are $1$ and $2$, respectively).

\hspace{0.2in}  If $2 | g$ (and hence $2 | d$ and $2 | r$ from admissibility), then $t$ is odd since $(t, d) = 1$ for all $t \in \mathcal{R}_{d, r}$, and:
\item If $2 ||r, 2 ||d$, then $s$ is even and $\star = 0$.
\item If $4 | r$ and $2 ||d$, $\mathcal{R}_{r, d}$ has one element and the corresponding $s$ is odd, and so using that \[\left(\frac{x + 4}{2}\right) = -\left(\frac{x}{2}\right),\] we see  
\[\star =   \sum_{v \Mod 2^\alpha} \left(\frac{t + 4v}{2^\alpha}\right)  \left(\frac{s}{2^\alpha}\right) = \delta_{2^\alpha = \square} 2^\alpha = \delta_{2^\alpha = \square}2 \phi(2^\alpha);\]

\item  if $4 | d$ and $2 || r$, there's exactly one choice of $r \in \mathcal{R}_{r, d}$ for which $s$ is odd; for this choice of $t$, once again
$\star = \delta_{2^\alpha = \square} 2^\alpha;$ for the other choice of $t$, $s$ is even and $\star = 0$. 
\end{enumerate}
Combining all the cases, we get the statement of the lemma. 

Finally, we conclude by a routine computation evaluating the following sum:

\begin{lemma}\label{worstcomputation}
Let $\mathcal{T}_r \subset \N^3$ denote the set of triples $(m, d, g)$ such that $(d, r)$ is admissible, $(m, d) =  1$, and $g | d^\infty.$ Let
$$\Theta_r(m, d, g):=  \frac{\eta(d^2 m g)}{\phi(d^2 m g)}\frac{ \theta_r(m) \phi^o(g)}{mgd}.$$ Then $\sum_{\mathcal{T}_r} \Theta_r(m, d, g)$ is absolutely convergent and equal to \[B \cdot \nu(r):= \prod_p \frac{p^4 - 2 p^2 - p + 1}{(p^2-1)^2}  \prod_{p | r}\left( 1 + \frac{p^2}{p^4 - 2 p^2 - p + 1}\right).\] Moreover, 

$$\sum_{\substack{(m, d, g) \in \mathcal{T}_r \\ mg \leq Z',d \leq Z }} \Theta_r(m, d, g)  - B \nu(r) \ll Z^{-2} + (Z')^{-1/5} .$$

\end{lemma}

\begin{proof}
Define $E:= \prod_{p} \left(1 +\frac{p}{(p^2-1)^2} \right), E(r):=  \prod_{p | r} \left(1 +\frac{p}{(p^2-1)^2} \right),$ $G:= \prod_{p}\left(1 - \frac{2p}{(p^2 - 1)^2 + p}\right), G(r):= \prod_{p | r}\left(1 - \frac{2p}{(p^2 - 1)^2 + p}\right),$ and $F(r) := \prod_{p | r} \left( 1 + \frac{p(p-1)}{(p^2-1)^2}\right).$
It is easy to verify that \[B \nu(r) =  \frac{EG F(r)}{E(r) G(r)}.\]
Next, since $g | d^\infty$ and $(m, d) = 1$, we have $\{ p | md^2g\} = \{p | d\} \sqcup \{ p |m \},$ so
\begin{align*}
\Theta_r(m, d, g) = \frac{1}{d^3  \prod_{p | d} (1 - 1/p^2)}  \frac{\theta_r(m)}{m^2\prod_{p | m} (1 - 1/p^2)} \frac{\phi^o(g)}{g^2}.
\end{align*}
We begin by establishing absolute convergence. For fixed $m, d$, the sum
\[\sideset{}{^g}\sum := \sum_{g | d^\infty} \frac{\phi^o(g)}{g^2}\] over $g$'s appearing in $\mathcal{T}_r$ for these $m$ and $d$ is upper bounded by
\begin{multline*}
\sum_{\substack{ g | d^\infty \\ g= \square}} \frac{2\phi(g)}{g^2} = 2\prod_{p | d}\left( 1 + \sum_{k \geq 1} \frac{p-1}{p} \frac{1}{p^{2r}}\right) \\
= 2 \prod_{p | d}\left(1 + \frac{1}{p(p + 1)}\right) \ll \prod_p(1 + 1/p^2) \leq \prod_p 1/(1 - 1/p^2) = \zeta(2),
\end{multline*} so is uniformly bounded. 

Now fix $d$. Any number $r$ can be written uniquely as $a^2\cdot  b \cdot 2^c$, where $b$ is square-free and $a, b$ odd. From the definition of $\theta_r$, $$\abs{\theta_r(a^2\cdot b \cdot2^c)} \leq a^2 \cdot 2^c.$$ In particular, using that $\prod_{p | m} (1 - 1/p^2) \gg 1$, 
$$\sum_{(m, d) = 1}  \frac{\theta_r(m)}{ m^2 \prod_{p | m} (1 - 1/p^2)}  \ll \sum_{m \in \N} \abs{\theta_r(m)}/m^{2} \ll \left(\sum_{c \in \N} 1/2^{c}\right)\left(\sum_{a \in \N}  1/a^{2} \right)\left( \sum_{b \sqf}  1/b^{2 }\right)\ll 1.$$

Finally, \[ \sum_{d \in \N} \frac{1}{d^3  \prod_{p | d} (1 - 1/p^2)}  \ll 1,\] so indeed, the series $\sum_{\mathcal{T}_r} \Theta_r(m, d, g)$ converges absolutely. 

 Next, we compute $\sideset{}{^g}\sum$ case by case from the definition.
\begin{itemize}
\item When $2 \nmid d$, $\phi^o(g) =\delta_{g = \square} \phi(g),$ and \[\sideset{}{^g}\sum = \sum_{\substack{ g | d^\infty \\ g= \square}} \frac{\phi(g)}{g^2}  = \prod_{p | d}\left(1 + \frac{1}{p(p + 1)}\right);  \]
\item When $2 || d$ and $2 ||r$, $\phi^o(g) =\delta_{g = \square} \phi(g)$ for odd $g$ and $0$ for even $g$, so \[\sideset{}{^g}\sum = \sum_{\substack{ g | d^\infty \\ g= \square \\ 2 \nmid g}} \frac{\phi(g)}{g^2}  =\prod_{p | d_o}\left(1 + \frac{1}{p(p + 1)}\right)\] where $d_o$ is the odd part of $d$; 
\item When $2 || d$ and $4 | r$, then  $\phi^o(g) =\delta_{g = \square} \phi(g)$ for $g$ odd and  $\phi^o(g) =2\delta_{g = \square} \phi(g)$ for $g$ even, so 
 \[\sideset{}{^g}\sum = \sum_{\substack{ g | d_0^\infty \\ g= \square }} \frac{\phi(g)}{g^2} + 2  \sum_{\substack{ g | d^\infty \\ g= \square \\ 2 | g}} \frac{\phi(g)}{g^2}  = \frac{4}{3} \prod_{p | d_o}\left(1 + \frac{1}{p(p + 1)}\right);\]
\item When $4 | d$ and $2 || r$, then  $\phi^o(g) =2 \delta_{g = \square} \phi(g)$, so 
 \[\sideset{}{^g}\sum =2  \prod_{p | d}\left(1 + \frac{1}{p(p + 1)}\right) = \frac{7}{3}  \prod_{p | d_o}\left(1 + \frac{1}{p(p + 1)}\right) ;\]
\end{itemize}

We can now compute the desired sum by expressing it as an Euler product. 

\textbf{Case I: $2 \nmid r$.}

Suppose first that $r$ is odd, so $(r, d)$ is admissible if and only if $d$ is odd and $(r, d) = 1$. Then
\begin{align*}
 \sum_{\mathcal{T}_r} \Theta_r(m, d, g) &= \sum_{\substack{ d \text{ odd}\\ (d, r) = 1}} \sum_{(m, d) = 1}   \frac{\prod_{p | d} (1 + 1/p(p+1))}{d^3  \prod_{p | d} (1 - 1/p^2)}  \frac{\theta_r(m)}{m^2\prod_{p | m} (1 - 1/p^2)}  \\
&=\sum_{m} \frac{\theta_r(m)}{m^2\prod_{p | m} (1 - 1/p^2)} \sum_{(d, 2mr) = 1}\frac{1}{d^3} \prod_{p | d} \frac{p(1 + p + p^2)}{(p-1)(p + 1)^2}.\\
\end{align*}
The inner sum can be expressed as the Euler product
\begin{multline*}
\prod_{p \nmid 2mr} \left( 1 + \sum_{k \geq 1} \frac{1}{p^{3k}} \frac{p(1 + p + p^2)}{(p-1)(p + 1)^2} \right) \hspace{-0.05in}= \hspace{-0.05in} \prod_{p \nmid 2mr} \left( 1 +\frac{p}{(p^2 -1)^2} \right) \\
= \prod_{p} \left( 1 +\frac{p}{(p^2 -1)^2} \right)/\prod_{p | 2mr} \left( 1 +\frac{p}{(p^2 -1)^2} \right)
\end{multline*}
so 
$$ \sum_{\mathcal{T}_r} \Theta_r(m, d, g) = \frac{E}{E(r)} \cdot \sum_m \frac{\theta_r(m)}{m^2}  \prod_{p | m}(1 - 1/p^2)^{-1} \prod_{p | 2m, p \nmid k} (1 + p/(p^2 - 1)^2)^{-1}.$$
This sum over $m$ can itself be expressed as an Euler product. The $p \neq 2$, $p \nmid r$ part of the product is
\begin{align*}
&\prod_{p \nmid 2r}\left(1 + \frac{1}{(1 - 1/p^2)(1 + p/(p^2 - 1)^2)}\left( \sum_{\alpha \geq 0} \frac{\theta_r(p^{2\alpha + 1})}{p^{4\alpha + 2}} + \sum_{\alpha \geq 0} \frac{\theta(p^{2\alpha + 2})}{p^{4\alpha + 4}} \right) \right)\\
= &\prod_{p \nmid 2r }\left(1 + \frac{1}{(p^2 - 1)(1 + p/(p^2 - 1)^2)}\left( - \sum_{\alpha \geq 0} \frac{1}{p^{2\alpha}} + \sum_{\alpha \geq 0} \frac{(p-2)}{p^{2\alpha + 1}} \right) \right)\\
= &\prod_{p \nmid 2r }\left(1 - \frac{2p}{(p^2 - 1)^2 + p}\right) = \frac{G}{G(2r)}.
\end{align*}
The $p |r$ factors are
\begin{align*}
\prod_{p | r}\left(1 + \frac{1}{1 - 1/p^2}\left( \sum_{\alpha \geq 0} \frac{\theta_r(p^{2\alpha + 1})}{p^{4\alpha + 2}} + \sum_{\alpha \geq 0} \frac{\theta(p^{2\alpha + 2})}{p^{4\alpha + 4}} \right) \right)&= \prod_{p | r}\left(1 + \frac{1}{p^2 - 1}\sum_{\alpha \geq 0} \frac{p-1}{p^{2\alpha + 1}}  \right)\\
&= \prod_{p | r}\left(1 + \frac{p(p-1)}{(p^2-1)^2}\right) = F(r).
\end{align*}
The Euler factor at $2$ is 
$$ \frac{1}{1 + 2/(2^2 - 1)^2} \left(1 + \sum_{\alpha \geq 1} \left(1 - \frac{1}{2^2} \right)^{-1}\frac{(-1)^\alpha 2^{\alpha - 1}}{2^{2\alpha}} \right) =  \frac{9}{11} \left(1 + \frac{1}{2} \frac{4}{3} \sum_{\alpha \geq 1}(-1/2)^\alpha\right) = \frac{7}{11} = G(2).$$
All the Euler factors together add up to $B \nu(r)$.

Now we estimate the tail, namely, the contribution of terms with $d > Z$ or $mg > W$.  
As noted above, for $d$ fixed, 
$$\sum_{(m, d) = 1} \sum_{g | d^\infty,g = \square}  \frac{\theta_r(m)}{m^2\prod_{p | m} (1 - 1/p^2)} \frac{\phi(g)}{g^2} $$ is uniformly bounded; hence, 

\begin{align}\label{dbd}
  \sum_{(m, d, g) \in \mathcal{T}_r: d \geq Z} \Theta_r(m, d, g) \ll \sum_{d \geq Z}  \frac{1}{d^3  \prod_{p | d} (1 - 1/p^2)}  \ll Z^{-2}.
\end{align}
For a given $g$, the contribution of summands with that $g$ is 
\begin{multline*}
\frac{\phi(g)}{g^2}\sum_{\substack{m, d\\ (m, d) = 1\\d \text{ odd} \\ g | d^\infty }} \frac{1/d^3}{\prod_{p | d} (1 - 1/p^2)}  \frac{\theta_r(m)/m^2}{\prod_{p | m} (1 - 1/p^2)} \\
\leq \frac{\phi(g)}{g^2} \sum_{\substack{m, d\\ (m, d) = 1\\d \text{ odd}  }} \frac{1/d^2}{ \prod_{p | d} (1 - 1/p^2)}  \frac{|\theta_r(m)|/m^2}{\prod_{p | m} (1 - 1/p^2)} \ll \frac{\phi(g)}{g^2}.
\end{multline*}
 Here we used that $\sum_{\substack{m, d\\ (m, d) = 1\\d \text{ odd}  }} \frac{1/d^2}{ \prod_{p | d} (1 - 1/p^2)}  \frac{|\theta_r(m)|/m^2}{\prod_{p | m} (1 - 1/p^2)}$ converges since these are terms of the original sum corresponding to $g = 1$. From this,
\begin{align}\label{gbd}
\sum_{(m, d, g) \in \mathcal{T}_r: g \geq W} \Theta_r(m, d, g) \ll\sum_{t > \sqrt{W}} \frac{1}{t^2}\ll W^{-1/2}.
\end{align}
Finally, the terms in the sum corresponding to elements of $\mathcal{T}_r$ with a fixed $m$ are 

\begin{multline*}
  \frac{\theta_r(m)/m^2}{\prod_{p | m} (1 - 1/p^2)} \sum_{\substack{g, d\\ (m, d) = 1\\d \text{ odd} \\ g | d^\infty }} \frac{\phi(g)}{g^2} \frac{1/d^3}{\prod_{p | d} (1 - 1/p^2)} \\ \leq  \frac{\theta_r(m)/m^2}{\prod_{p | m} (1 - 1/p^2)}   \sum_{\substack{g, d\\d \text{ odd} \\ g | d^\infty }} \frac{\phi(g)}{g^2} \frac{1/d^3}{\prod_{p | d} (1 - 1/p^2)}  \ll   \frac{\theta_r(m)/m^2}{\prod_{p | m} (1 - 1/p^2)},
\end{multline*}
where again, $ \sum_{\substack{g, d\\d \text{ odd} \\ g | d^\infty }} \frac{\phi(g)}{g^2} \frac{1/d^3}{\prod_{p | d} (1 - 1/p^2)}$ converges since these are the $m = 1$ terms in the original sum. Thus, 
$$\sum_{(m, d, g) \in \mathcal{T}_r: m \geq V} \Theta_r(m, d, g) \ll  \sum_{m \geq V} |\theta_r(m)|/m^2.$$ Recall that for $m = a^2\cdot  b\cdot 2^c$ for $b$ square-free, $a, b$ odd, one has $\theta_r(m)/m^2 \ll\frac{1}{a^2 b^2 2^c}.$ Hence

\begin{multline}\label{mbd}\sum_{m \geq V} |\theta_r(m)|/m^2 \leq \sum_{c \geq 1} \frac{1}{2^c} \sum_{\substack{ m = a^2 b  \text{ odd }\\ m\geq V/2^c}} |\theta_r(m)|/m^2 \\
\ll   \sum_{c \geq 1} \frac{1}{2^c} \Big(\sum_{a >(V/2^c)^{1/3}}\sum_b \frac{1}{a^2b^2}+\sum_{b >(V/2^c)^{1/3}}  \sum_a \frac{1}{a^2b^2} \Big) \ll \sum_c \frac{1}{2^c} \frac{2^{c/3}}{V^{1/3}} \ll V^{-1/3}. 
\end{multline}

Finally, let $Z' = VW$, where $V = Z^{3/5}$, $V = Z^{2/5}$. Then $gm > Z'$ implies that at least one of $g > W$ or $m >V$ holds, so putting together (\ref*{dbd}), (\ref*{gbd}), and (\ref*{mbd}), 

$$\sum_{\substack{(m, d, g) \in \mathcal{T}_r \\ mg \geq Z' \text{ or } d \geq Z }} \Theta_r(m, d, g)  \ll Z^{-2} + (Z')^{-1/5}.$$

\textbf{Case II: $2 || r$.}

Next, suppose $r$ is even, let $r_o$ the odd part of $r$.
\begin{align*}
 \sum_{\mathcal{T}_r} \Theta_r(m, d, g) &= \sum_{\substack{ d \text{ odd }\\ (d, r_o) = 1}} \sum_{(m, 2d) = 1}   \frac{\prod_{p | d_o} (1 + 1/p(p+1))}{d^3  \prod_{p | d} (1 - 1/p^2)}  \frac{\theta_{r_o}(m)}{m^2\prod_{p | m} (1 - 1/p^2)}  \\
& +\sum_{\substack{ 2||d \\ (d, r_o) = 1}} \sum_{(m, d) = 1}   \frac{\prod_{p | d_o} (1 + 1/p(p+1))}{d^3  \prod_{p | d} (1 - 1/p^2)}  \frac{\theta_{r_o}(m)}{m^2\prod_{p | m} (1 - 1/p^2)}  \\
& +  \frac{7}{3} \sum_{\substack{ 4|d \\ (d, r_o) = 1}} \sum_{(m, d) = 1}   \frac{\prod_{p | d_o} (1 + 1/p(p+1))}{d^3  \prod_{p | d} (1 - 1/p^2)}  \frac{\theta_{r_o}(m)}{m^2\prod_{p | m} (1 - 1/p^2)}\\ &=: I + II + (7/3)III.
\end{align*}
We compute the three summands $I, II,$ and $III$, separately. 

Observe that for $m$ odd and $\alpha \geq 1$,  
\begin{multline*}
 \frac{\theta_{r_o}(2^\alpha m)}{2^{2 \alpha}m^2\prod_{p | 2m} (1 - 1/p^2)} = \frac{1}{2^{2\alpha}} \frac{1}{1 - 1/4} (-1)^\alpha 2^{\alpha - 1} \frac{\theta_{r_o}(m)}{m^2\prod_{p | m} (1 - 1/p^2)} \\
=  \frac{2 \cdot (-1)^\alpha}{3 \cdot 2^{\alpha}}  \frac{\theta_{r_o}(m)}{m^2\prod_{p | m} (1 - 1/p^2)}.
\end{multline*}

Thus 

\begin{multline*}
I  =  \sum_{\substack{ d \text{ odd }\\ (d, r_o) = 1}} \sum_{(m, d) = 1}   \frac{\prod_{p | d} (1 + 1/p(p+1))}{d^3  \prod_{p | d} (1 - 1/p^2)}  \frac{\theta_{r_o}(m)}{m^2\prod_{p | m} (1 - 1/p^2)} \left( 1 + \sum_{\alpha \geq 1} \frac{2 \cdot (-1)^\alpha}{3 \cdot 2^{\alpha}} \right)^{-1}\\
 = \frac{9}{7} \sum_{\mathcal{T}_{r_o}} \Theta_{r_o}(m, d, g).  
\end{multline*}

For $II$, we can rewrite
\begin{multline*}
II =  \sum_{\substack{ d_o \text{ odd } \\ (d, r_o) = 1}}   \frac{1}{8 (1 - 1/4)} \frac{\prod_{p | } (1 + 1/p(p+1))}{d^3  \prod_{p |d } (1 - 1/p^2)} \sum_{(m, 2d_o) = 1}  \frac{\theta_{r_o}(m)}{m^2\prod_{p | m} (1 - 1/p^2)}\\
 =  \sum_{\substack{ d \text{ odd } \\ (d, r_o) = 1}}   \frac{9}{7} \frac{1}{6} \sum_{\mathcal{T}_{r_o}} \Theta_{r_o}(m, d, g).
\end{multline*}
Finally, 
\begin{multline*}
III = \sum_{\alpha \geq 2} \sum_{\substack{ d_o \text{ odd } \\ (d, r_o) = 1}}   \frac{1}{8^\alpha (1 - 1/4)} \frac{\prod_{p | d} (1 + 1/p(p+1))}{d^3  \prod_{p | d} (1 - 1/p^2)} \sum_{(m, 2d) = 1}  \frac{\theta_{r_o}(m)}{m^2\prod_{p | m} (1 - 1/p^2)} \\
= \frac{9}{7} \frac{1}{6 \cdot 7}  \sum_{\mathcal{T}_{r_o}} \Theta_{r_o}(m, d, g).
\end{multline*}

In summary, 
\[ \frac{\theta_{r_o}(2^\alpha m)}{2^{2 \alpha}m^2\prod_{p | 2m} (1 - 1/p^2)} = \frac{11}{9}\left(1 + \frac{1}{6} + \frac{1}{6 \cdot 7} \frac{7}{3}\right)  \sum_{\mathcal{T}_{r_o}} \Theta_{r_o}(m, d, g) = \frac{11}{7}  \sum_{\mathcal{T}_{r_o}} \Theta_{r_o}(m, d, g).\]

It remains to notice that $\nu(r) = 11/7 c(r_o)$. 

\textbf{Case III: $4 || r$.}

Finally, we address the case $4 | r$: 
\begin{align*}
 \sum_{\mathcal{T}_r} \Theta_r(m, d, g) &= \sum_{\substack{ d \text{ odd }\\ (d, r_o) = 1}} \sum_{(m, 2d) = 1}   \frac{\prod_{p | d_o} (1 + 1/p(p+1))}{d^3  \prod_{p | d} (1 - 1/p^2)}  \frac{\theta_{r_o}(m)}{m^2\prod_{p | m} (1 - 1/p^2)}  \\
& + \frac{4}{3}\sum_{\substack{ 2||d \\ (d, r_o) = 1}} \sum_{(m, d) = 1}   \frac{\prod_{p | d_o} (1 + 1/p(p+1))}{d^3  \prod_{p | d} (1 - 1/p^2)}  \frac{\theta_{r_o}(m)}{m^2\prod_{p | m} (1 - 1/p^2)}.
\end{align*}
Since the first summand matches $I = \frac{9}{7} \cdot c(r_o)$; the second is equal to $\frac{4}{3} \cdot II = \frac{4}{3} \frac{1}{6} \frac{9}{7} \cdot c(r_o) = \frac{2}{7} c(r_o),$ and we get the desired answer once again.

The error term analysis for $r$ even matches that of odd $r$.

\end{proof}

\subsubsection{Truncation}\label{truncation}
Let $0\leq \tau < 1/2$ be a parameter to be determined later. From the analysis of the previous subsection and the class number trivial upper bound,
\begin{multline}\label{coup}
 \sideset{}{^\square}\sum_{\substack{ N \in [X, X + Y] \\ P \nmid N }} H_1(r^2 N^2 - 4 PN) =  \sum_{ d^2 < Y^\tau}\ \sideset{}{^\square}\sum_{\substack{ N \in [X, X + Y]  \\ N \in {\mathcal{A}}_{r, d} \\ P \nmid N }} \frac{L(1, \chi_{\frac{r^2N^2 - 4PN}{d^2}})}{\pi d}  \sqrt{4 P N - r^2N^2} + \\
+ \O\bigg(\sum_{\sqrt{P} \gg d \gg Y^\tau} \left(\frac{Y}{d^2} + 1 \right)\frac{(PX)^{\frac{1}{2} + \eps}}{d} \bigg).
\end{multline}
Note that for $N \in [X, X + Y]$ (and assuming $r^2 (X + Y) < 4 P$),

\begin{align*}
\sqrt{4 P N- r^2 N^2} &- \sqrt{4 P X - r^2 X^2} \\
&=  \sqrt{4P - r^2 N}\left( \sqrt{N} - \sqrt{X}\right)  -  \sqrt{4 P X - r^2 X^2}\left( 1 - \sqrt{1 - \frac{r^2 (N - X)}{4P - r^2 X}}\right)\\
&\ll \sqrt{PX} \left( \sqrt{1 + \frac{Y}{X}} - 1\right) - {\sqrt{X}  \sqrt{4 P  - r^2 X}\left( 1 - \sqrt{1 - \frac{r^2 Y}{4P - r^2 X}}\right)}\\  &\ll Y P^{\frac{1}{2}} X^{-\frac{1}{2}} + r (XY)^{\frac{1}{2}}.
\end{align*}
Combining this with the trivial upper bound on the special value of the $L$ function,
\begin{multline*}
(\ref*{coup}) = \sum_{d < Y^\tau }\ \sideset{}{^\square} \sum_{\substack{ N \in [X, X + Y]  \\ N \in {\mathcal{A}}_{r, d}, P \nmid N }} \frac{L(1, \chi_{\frac{r^2N^2 - 4PN}{d^2}})}{\pi d}  \sqrt{4 P X - r^2X^2} + \\  + \O\big((PX)^\eps (Y^2 P^{\frac{1}{2}} X^{-\frac{1}{2}} + r Y^{\frac{3}{2}} X^{\frac{1}{2}} + (PX)^\frac{1}{2}Y^{1 - 2 \tau}) \big).
\end{multline*}
By (\ref*{truncate}), we can truncate the $L$ function in the main term above as
\begin{align*} \sqrt{4 P X - r^2X^2} \sum_{d < Y^\tau }\frac{1}{\pi d} \ \sideset{}{^\square}  \sum_{\substack{ N \in [X, X + Y]  \\ N \in {\mathcal{A}}_{r, d} \\ P \nmid N }} \sum_{n = 1}^T \frac{\left(\frac{(r^2N^2 - 4PN)/d^2}{n}\right)}{n}  + \O\Big( \sqrt{PX}\sum_{d < Y^\tau} \sum_{X}^{X + Y}  \frac{(PX)^{\frac{1}{2} + \eps}}{d T}\Big),
\end{align*}
we conclude that
\begin{multline}\label{identity} 
 \sideset{}{^\square}\sum_{\substack{ N \in [X, X + Y] \\ P \nmid N }} H_1(r^2 N^2 - 4 PN) = \frac{ \sqrt{4 P X - r^2X^2}}{\pi }  \sum_{d^2 \leq Y^\tau }\sum_{n \leq T}  \frac{\mathcal{S}_{d, n, r}}{nd}+ \\
+  \O\Big(\Big( \frac{YPX}{T}+ \frac{Y^2 P^{\frac{1}{2}}}{ X^{\frac{1}{2}}} + r Y^{\frac{3}{2}} X^{\frac{1}{2}} +  (PX)^\frac{1}{2}Y^{1 - 2 \tau}\Big)(PX)^\eps\Big) ,
\end{multline}
where we define $\mathcal{S}_{d, n, r}$ as follows: 

\begin{defn}
Let $d, r$ be positive integers, let $Y = o(X)$, and let $P$ be a prime with that $4P > r^2(X + Y)$. Define 
\[\mathcal{S}_{d, n, r} := \sum_{\substack{ N \in [X, X + Y]\\N \in \mathcal{A}_{r, d} \\ P \nmid N}}\mu^2(N)\left(\frac{N}{n}\right)  \left(\frac{(r^2N-4P)/d^2}{n}\right).\]
Note that $\mathcal{S}_{d, n, r} = 0$ unless $(r, d)$ is an admissible pair.
\end{defn}

\subsubsection{Small $n$}\label{smalln}
In this subsection we analyze the case $n < Y^\sigma$. We prove: 

\begin{prop}\label{smalldn}
For any parameters $0 \leq \tau, \sigma \leq 1$,

\[\sum_{n \leq Y^\sigma} \sum_{d < Y^\tau}\frac{\mathcal{S}_{d, n, r}}{nd}= \frac{Y B  \nu(r)}{\zeta(2)} + \O\big( \sqrt{X} Y^{\frac{\sigma}{2}} + Y^{\frac{3 \sigma}{2} + \tau + \eps} + Y^{1 - 2 \tau} + Y^{1 -\frac{ \sigma}{5}} \big).\]
\end{prop}

The function $\mathcal{S}_{d, n, r}$ can be approximated by multiplicative functions defined in Definition \ref*{multfuncdefn} via the next two lemmas.
\begin{lemma}\label{weirdsum} Let $m$ be an integer with $P \nmid m$ and $\upsilon_2(m) \neq 1, 2$, and let $(r, d)$ be an admissible pair. Then: 
\[ \sum_{\substack{ a \Mod d^2 m \\ a \in \mathcal{R}_{r, d} \Mod d^2}} \left(\frac{a}{m}\right)  \left(\frac{(r^2a-4P)/d^2}{m}\right) = \phi^o_{r, d}(g) \theta_r(m'),\] where $g:= (d^\infty, m)$, and $m':= m/g$. 
\end{lemma}
\begin{proof}
 Let $t$ be an integer that reduces to an element of $\mathcal{R}_{d, r}$ modulo $d^2$. Then: 
\begin{align*}
 \sum_{\substack{ a \Mod d^2 m \\ a \equiv t \Mod d^2}} \left(\frac{a}{m}\right)  \left(\frac{(r^2a-4P)/d^2}{m}\right)  &=  \sum_{\substack{ a \Mod d^2 m \\ a \equiv t \Mod d^2}} \left(\frac{a}{m'}\right)  \left(\frac{(r^2a-4P)/d^2}{m'}\right)  \left(\frac{a}{g}\right)  \left(\frac{(r^2a-4P)/d^2}{g}\right) \\
&=  \sum_{b \Mod m'} \left(\frac{b}{m'}\right)  \left(\frac{r^2b-4P}{m'}\right)  \sum_{\substack{ a \Mod d^2 g \\ a \equiv t \Mod d^2}}  \left(\frac{a}{g}\right) \left(\frac{(r^2a-4P)/d^2}{g}\right)\\
&= \theta_r(m') \sum_{\substack{ a \Mod d^2 g \\ a \equiv t \Mod d^2}}  \left(\frac{a}{g}\right) \left(\frac{(r^2a-4P)/d^2}{g}\right)
\end{align*}
where we applied Lemma \ref*{thetalemma} and the assumption $P \nmid m'$ in the last step. It remains to sum the above identity over $r \in \mathcal{R}_{r, d}$, which is $\phi^o_{d, r}(g)$ by definition. 

\end{proof}
\begin{lemma}\label{silly}
Let $d, n, r \geq 1$ be integers with $P \nmid n$, such that $(d, r)$ is an admissible pair. 
Then 
\[\mathcal{S}_{d, n, r}  =  \frac{Y}{\zeta(2)} \frac{\eta(d^2 n)}{\phi(d^2n)}  \phi^o_{r, d}(g)  \theta_r(n') + \O\left( \sqrt{Xn}/d + d n^{3/2 + \eps}\right),\] where $g:= (d^\infty, n)$, and $n':= n/g$. 
\end{lemma}
\begin{proof}
Let $t \in \mathcal{R}_{r, d}$ be a remainder modulo $d^2$. The character $\left(\frac{(r^2x-4P)/d^2}{n}\right)$ is a function of $x \Mod f n d^2$, where $f = 4$ if $n$ is even and $f = 1$ if $n$ is odd. Thus,
\begin{align}\label{images}
 \sum_{\substack{ N \in [X, X + Y]\\N \equiv t \Mod d^2 \\ P \nmid N}}\mu^2(N)\left(\frac{N}{n}\right)  \left(\frac{(r^2N-4P)/d^2}{n}\right) &=  \hspace{-0.1in} \sum_{\substack{ a \Mod f d^2 n \\ a \equiv t \Mod d^2}} \sum_{\substack{ N \in [X, X + Y]\\N \equiv a \Mod f d^2n \\ P \nmid N}}\hspace{-0.15in}\mu^2(N)\left(\frac{N}{n}\right)  \left(\frac{(r^2N-4P)/d^2}{n}\right) \nonumber\\
&= \hspace{-0.1in}\sum_{\substack{ a \Mod f d^2 n \\ a \equiv t \Mod d^2}} \left(\frac{a}{n}\right)  \left(\frac{(r^2a-4P)/d^2}{n}\right)  \sum_{\substack{ N \in [X, X + Y]\\N \equiv a \Mod f d^2n \\ P \nmid N}}\hspace{-0.15in}\mu^2(N).
\end{align}
Since the class numbers $H(r^2 N^2 - 4 P N)$ with $r \geq 1$ appear only for $P \geq N/4 \geq X/4$, we can remove the condition $P \nmid N$ in the expression above for a cumulative error term of $\O(n)$. Evidently, the terms with $(a, n) > 1$ above vanish; furthermore, $(r, d) = 1$ for all $r \in \mathcal{R}_{r, d}$, so $(a, d) = 1$; hence it suffices to consider $a$ coprime to $f d^2 n$. For such $a$, we can apply a Theorem of Hooley: 
\begin{theorem}[Hooley, \cite{hooley}]\label{idealtheorem}:
Let $(a, m) = 1$.
$$\sum_{\substack{N =X \\ N = a \Mod{m}}}^{X +Y} \mu^2(N) = \frac{Y}{\zeta(2)} \frac{\eta(m)}{\phi(m)} + O(\sqrt{X/m} + m^{1/2 + \eps}).$$ 
\end{theorem}
From this, 

\[(\ref*{images}) = \frac{Y}{\zeta(2)} \frac{\eta(d^2 n)}{f \phi(d^2n)}  \sum_{\substack{ a \Mod d^2 f n \\ a \equiv t \Mod d^2}} \left(\frac{a}{n}\right)  \left(\frac{(r^2a-4P)/d^2}{n}\right) + \O\left( \sqrt{Xn}/d + d^{1 +\eps} n^{3/2 + \eps}\right).\]
The Lemma then follows from Lemma \ref*{weirdsum} applied to $m = fn $ along with the fact that $\phi^o(fn) = f \phi^o(n)$ (which follows from Lemma \ref*{tildephilemma}).
\end{proof}

\begin{proof}[Proof of Proposition \ref*{smalldn}]
Since $P \gg X$, $Y = o(X)$, and $n \ll Y^\sigma$, the condition $(P, n) = 1$ holds asymptotically. Thus, by Lemma \ref*{silly} and Lemma \ref*{worstcomputation}, 
\begin{align*}
\sum_{\substack{ n \leq Y^\sigma\\ d \leq Y^\tau}}  \frac{\mathcal{S}_{d, n, r}}{nd} &= \hspace{-0.2in}\sum_{\substack{  n \leq Y^\sigma\\   d \leq Y^\tau\\ (r, d) \text{ admissible }}} \hspace{-0.2in}  \frac{Y}{\zeta(2)} \frac{\eta(d^2n)}{\phi(d^2n)}\frac{\phi^o_{r, d}(g)\theta_r(n')}{nd}  +  \sum_{\substack{ n \leq Y^\sigma\\ d \leq Y^\tau}}  \O\left( \frac{\sqrt{Xn}}{nd^2 } + \frac{d^{1 + \eps} n^{3/2 + \eps}}{nd} \right)\\
 &= \frac{Y B \nu(r)}{\zeta(2)} + \O\left( \sqrt{X} Y^{\frac{\sigma}{2}} + Y^{\frac{3 \sigma}{2} + \tau + \eps} + Y^{1 - 2 \tau} + Y^{1 - \frac{\sigma}{5}} \right)
\end{align*}
as aimed. 
\end{proof}
\subsubsection{Large $n$.}\label{largen}
In this subsection we prove:
\begin{prop}\label{smalldlargem}
For any parameters $0 \leq \tau, \sigma \leq 1$,
\[\sum_{n \in [Y^\sigma, T]} \sum_{d < Y^\tau} \frac{\mathcal{S}_{d, n, r}}{nd} \ll  (TPX)^\eps \left( \sqrt{X} + Y^{1 - \frac{\sigma}{2}} + \sqrt{Y} T^{\frac{1}{4}}\right).\]
\end{prop}

\begin{lemma}\label{shortintchar}
Let  $\pi(x): \Z \to S^1 \subset \C$ be an $m$-periodic function from the integers to the unit circle, and let $\ell \in \R_+$. Then:
$$\max_{I : \abs{I} = \ell}\Big| \sum_{x \in I} \pi(x)\Big|\ll  \sqrt{\mathcal{M} \ell} + \frac{\mathcal{M} \ell}{m} + 1,$$
where the maximum is taken over all intervals $I \subset \R$ of length $\ell$ and $$\mathcal{M}:= \max_{n \Mod m} \Big|\sum_{b \Mod m}  e(b n/m) \pi(b) \Big| \text{ for } e(x) := e^{2 \pi i x}.$$
\end{lemma}
\begin{proof}
Let $I = [c, c + \ell]$ and let $\eps = \eps_\ell$ be a parameter to be chosen later. For an $m$-periodic function $\pi$ and a Schwartz function $f$, one has by Poisson summation that 

\begin{align}\label{arithmeticPoisson2}
\sum_{n \in \Z} \pi(n) f(n) &= \sum_{b \Mod m}\pi(b) \sum_{a \in \Z} f(m a + b) = \sum_{b \Mod m} \frac{\pi(b)}{m} \sum_{n \in \Z} e\left( b n/m \right) \widehat{f}\left( \frac{n}{m}\right).
\end{align}
We construct a test function $f$ as follows.
Let $\psi(x) : \R \to \R_+$ be smooth function with $\norm{\psi}_{1} = 1$ and $\mathrm{supp}(\psi) \subset [-1, 1].$ Define
$$\psi_\eps(x):= \frac{\psi(x/\eps)}{\eps}, \widehat{\psi_\eps}(t) = \widehat{\psi}(\eps t).$$ Then $\norm{\psi_\eps}_{1}  = 1$ and $\norm{\smash{\widehat{\psi_\eps}}}_{\infty} = \norm{\smash{\widehat{\psi}}}_{\infty} \leq 1,$ and  $\mathrm{supp}(\psi) \subset [-\eps, \eps]$. Let $$f_{c, \eps}(t) := (\chi_{[0, 1]} * \psi_\eps) \left(\frac{t - c}{\ell} \right),$$ where $\chi_{[0, 1]}$ is the characteristic function of the interval $[0, 1]$. The function $f = f_{c, \eps}$ is a smooth function satisfying the following properties: 

\begin{enumerate}
\item $f(t) \in [0, 1]$;
\item $\mathrm{supp} (f) \subseteq [c - \eps \ell, c + \ell + \eps \ell]$;
\item $f(t) = 1$ for $t \in [c + \ell \eps; c + \ell - \ell \eps];$
\item $|\widehat{f}(t)| = \ell \cdot \abs{\widehat{\chi}_{[0, 1]}(t\ell)} \cdot \big|\widehat{\psi}(\eps t \ell)\big|$ \\
\item $\widehat{f}(t) \leq \ell, \widehat{f}(t) \ll_{K, \psi} \dfrac{\ell}{(\eps \ell t)^K}$ for $K \geq 2$ (since $\widehat{\psi}$ is Schwartz).
\end{enumerate}
 Properties $1, 2,$ and $3$ imply that 
$$ \sum_{x \in I} \pi(x)=  \sum_{n \in \Z} \pi(n) f(n) + \O\left( \eps \ell + 1 \right).$$
From (\ref*{arithmeticPoisson2}),
\begin{multline*}
\sum_{n \in \Z}\pi(n)  f(n) = \frac{1}{m} \sum_{n\in \Z} \widehat{f} \left(\frac{n}{m}\right) \sum_{b \Mod m} e(b n/m) \pi(b) \\ \ll \frac{\max_{n \Mod m} \abs{\sum_{b \Mod m}  e(b n/m) \pi(b) }}{m} \sum_{n \in \Z} \abs{\widehat{f}(n/m)}.
\end{multline*}
Choosing an integer parameter $X_c = \max\{ 1, m/\eps \ell \}$ and using property $5$, we thus get a bound
\begin{multline*}
\sum_{x \in I} \pi(x) = \sum_{n \in \Z} \pi(n) f(n) + \O(\ell \eps + 1) \\
 \ll \frac{\mathcal{M}}{m}   \sum_{n \in \Z} \abs{\widehat{f}(n/m)} + \O(\eps \ell + 1) \ll\frac{\mathcal{M}}{m} \left( \ell X_c + \ell \sum_{t \geq X_{c}} \frac{m^K}{\eps^K n^K \ell^K }\right) + \O(\ell\eps+1) \\ \ll_K \frac{\mathcal{M}}{m} \left( \ell X_c + \frac{\ell m^K}{\eps^K \ell^K X_c^{K-1} }\right) + \O(\ell\eps + 1)\ll \frac{\mathcal{M}}{m} ( \ell + m/\eps) + \O(\ell\eps +1).
\end{multline*}
Finally, setting $\eps:= \sqrt{\mathcal{M}/\ell}$ gives the claimed result. 
\end{proof}

\begin{lemma}\label{characterbd}
Let $A, B, C, D$ be integers, let $m \in \N$, and let $\chi(x) = \left( \frac{x}{m}\right)$ be a Kronecker symbol(a character of modulus $m'$, where $m' = 4m$ if $m \equiv 2 \Mod 4$, and $m = m'$ otherwise). Let $\pi(x):= \chi(Ax + B) \chi(Cx + D)$.
Then: 
$$\mathcal{M} := \max_{n \Mod m'} \Big|\sum_{x \Mod m'}  e(x n/m') \pi(x) \Big| \ll \mathfrak{m},$$ where $\mathfrak{m}$ is given as follows. Let $h = (AD - BC, m)$, and let $\mathcal{P}$ be a set of primes given by 
$$\mathcal{P} :=\{p |m :  2 \nmid \upsilon_p(m),  p \nmid h, \text{ and }(A, C, p) = 1\}.$$ Then:

$$ \mathfrak{m} = \mathfrak{m}_{m, \pi}:= \frac{m}{\prod_{\substack{ p \in \mathcal{P}}} \sqrt{p}/2}.$$

\end{lemma}
\begin{proof}
Since $\left( \frac{\cdot}{2}\right) = \left( \frac{\cdot}{8} \right)$,
\[ \max_{n \Mod 4m} \Big|\sum_{x \Mod 4m}  e\left(\frac{x n}{4m}\right) \pi(x) \Big| \geq  \max_{4n \Mod 4m} \Big|\sum_{x \Mod m'}  e\left(\frac{4 x n}{4m}\right)\pi(x) \Big| >  \max_{n \Mod m} \Big|\sum_{x \Mod m}  \left(\frac{x n}{m}\right)\pi(x) \Big|,\]
so by replacing $m$ with $4m$ when $m \equiv 2 \Mod 4$, it suffices to prove the statement in the case $m = m'$. 

We claim that restricted to such $m$, $\mathcal{M}$ is multiplicative in $m$. Indeed, let $m = \prod_p p^{\alpha_p}$, let $\chi_p:= \left( \frac{\cdot}{p}\right)^{\alpha_p}$ be the Kronecker symbol, and let
\[\pi_p(x):= \chi_p(Ax + B)\chi_p(Cx + D),\] so $\pi(x) = \prod_p \pi_p(x_p)$. Let $y_p$ be the inverse of $\prod_{q \neq p} q^{\alpha_q}$ modulo $p^{\alpha_p}$, and let $y$ be an integer that reduces to $y_p \Mod p^{\alpha_p}$ for all $p$ (so in particular, $(y, m) = 1$). Then for any $x$,

\[x y \sum_p \left(   \prod_{q \neq p} q^{\alpha_q} \right)\equiv x \Mod m,\] 
so, 
\begin{multline*}
\sum_{x \Mod m}  e(x n/m) \pi(x) = \sum_{\substack{x \Mod m}}e\Big(n \Big(y\sum_p x/p^{\alpha_p}\Big)\Big)\prod_p \pi_p(x_p) \\ 
= \hspace{-0.2in} \sum_{\substack{x \Mod m\\ (x_p:= x \Mod p^{\alpha_p})}}  \hspace{-0.2in}e\Big(n \Big(y\sum_p x_p/p^{\alpha_p}\Big)\Big)\prod_p \pi_p(x_p)=  \prod_p \sum_{x_p \Mod p^{\alpha_p}}  e(x_p (ny)/p^{\alpha_p}) \pi_p(x_p).
\end{multline*}
Since we could choose $ny $ to have any set of simultaneous reductions modulo all $p^\alpha|m$, the maximum over $n$ for $\pi$ will be the product of the corresponding maxima for the $\pi_i$'s.

Assume now that $m = p^{\alpha}$ for some prime $p$. If $\alpha$ is even or $p = 2$ or $p |h$, we apply the trivial bound, so assume $\alpha$ and $p$ are odd and $p \nmid h$.

\textbf{Case I:} Suppose $(AC, p) = 1$. Then: 

\begin{align}\label{abc}
\Big|\sum_{x \Mod m}  e(x n/m) \chi(Ax + B) \chi(Cx + D)\Big| &= \Big|\sum_{x \Mod m} e(xn/m) \chi(x + B A^{-1}) \chi(x + D C^{-1})\Big| \nonumber \\
&= \Big|\sum_{x \Mod m} e(xn/m) \chi(x) \chi(x + D C^{-1} - B A^{-1})\Big|   
\end{align}
where the inverses are taken mod $p^{\alpha}$. 
Notice that for any $s$ with $(s, m) = 1$ and $t := s^{-1}  \Mod m$ and for any shift $h$,

\begin{align*}
 \abs{\sum_{x \Mod m} \chi(x) \chi(x + h) e\left(\frac{n x}{m}\right)} &=  \abs{\sum_{x \Mod m} \chi(sx) \chi(sx + sh) e\left(\frac{(nt)(sx)}{m}\right)}\\
&=  \abs{\sum_{x \Mod m} \chi(x) \chi(x + sh) e\left(\frac{(nt)x}{m}\right)}.\end{align*}
Thus, 
$\max_n \abs{\sum_{x \Mod m} \chi(x) \chi(x + h) e\left(\frac{n x}{m}\right)}$ depends only on $(m, h)$, so
\[(\ref*{abc}) = \max_{n \Mod m} \Big|\sum_{x \Mod m} e(xn/m) \chi(x) \chi(x + 1)\Big|.\]
The inner sum depends on the $p$-adic valuation of $n$:
\begin{itemize}
\item When $n \equiv 0 \Mod p^\alpha$, by Lemma \ref*{thetalemma},
$$\abs{\sum_{n \Mod p^\alpha} e(xn /p^\alpha)\chi(x) \chi(x + h)} = \abs{\sum_{n \Mod p^\alpha} \chi(x) \chi(x + 1)} = \abs{\theta(p^\alpha)} = p^{\alpha-1};$$
 
\item When $\upsilon_p(n) = \alpha - 1$, $n =: n' p^{\alpha-1}$, 
\begin{align*}
\sum_{x \Mod p^\alpha} e(n x/p^\alpha)\chi(x) \chi(x + 1) &= p^{\alpha - 1} \sum_{x \mod p} e(x n'/p)\chi(x) \chi(x + 1)\\
& =  p^{\alpha - 1} \sideset{}{^*} \sum_{x \Mod p} e(cx /p)\chi(1 + x^{-1}) \\
&= \pm  p^{\alpha - 1}  \sideset{}{^*}\sum_{x \Mod p} e(x^{-1} /p)\chi(x + 1)  \leq 2 p^{\alpha - 1/2}
\end{align*}
(where $\sum^*$ denotes summation over coprime remainders.)

\item Finally, when $\upsilon_p(n) < \alpha-1$, the sum is $0$ as the Legendre symbol is $p$-periodic for $p \neq 2$. \\
\end{itemize}
In summary, the maximum over $n$ is $2p^{\alpha - 1/2}$ when $p \nmid h$ and $(2AC, p) = 1$.\\

\textbf{Case II:} Next, suppose $(A, p) = 1$ but $p | C$. If $p | D$, the sum vanishes, so without loss of generality, $p \nmid D$, and

\begin{align*}
\Big|\sum_{x \Mod m}  e(x n/m) \chi(Ax + B) \chi(Cx + D)\Big| &= \Big|\sum_{x \Mod m} e(xn/m) \chi(x)\Big|. \\
\end{align*}
\begin{itemize}
\item When $n \equiv 0 \Mod p^\alpha$, $\sum_{x \Mod m} \left( \frac{x}{m}\right) e\left(\frac{n x}{m}\right)= \sum_{x \Mod m} \left( \frac{x}{m}\right) = 0.$
\item When $\upsilon_p(n) = \alpha - 1$, $n =: n' p^{\alpha-1}$, 
$$\abs{\sum_{x \Mod p^\alpha} e(n x/p^\alpha)\chi(x) }= p^{\alpha - 1} \abs{\sum_{x \Mod p} e(x n'/p)\chi(x) }=  p^{\alpha-1}\abs{ \sum_{x \Mod p} e(x/p) \chi(x) }\leq p^{\alpha - 1/2}$$ (Gauss sum).
\item When $\upsilon_p(n) < \alpha-1$, the sum is $0$.
\end{itemize}
In summary, the maximum over $n$ is always at most $p^{\alpha - 1/2}$ when $p$ divides exactly one of $A, C$.\\
We need not consider the case $p | A, C$ since then $p | h$, so this concludes the proof.

\end{proof}

\begin{lemma}\label{twistedsum22}
Let $m, w, D \in \N$ and $q, t \in \Z$ with $(t, D) = 1$ and $wt \equiv q \Mod D$. Let $\chi = \left( \frac{\cdot}{m}\right)$. Then:

$$\sum_{\substack{ N = X \\ N = t \Mod D }}^{X + Y}  \mu^2(N) \chi(N) \chi((wN - q)/D) \ll  \frac{\mathfrak{m} }{mD} Y + \frac{\log X \sqrt{\mathfrak{m}Y} }{\sqrt{D}} + \sqrt{X},$$  
where $$\mathfrak{m} := \frac{m}{\prod_{p \in \mathcal{P}} \sqrt{p}/2}, $$ \[\mathcal{P} :=  \{p |m :  2 \nmid \upsilon_p(m),  p \nmid (q, m) ,  \text{ and }(D, w, p) = 1\}.\] 
\end{lemma}

\begin{proof}
Let $l \in \Z$ be such that $wr = q + l D$. Then: 

\begin{align}\label{eqq}
\sum_{\substack{ N = X \\ N = t \Mod D }}^{X + Y}  \mu^2(N) \chi(N) \chi\left(\frac{wN - q}{D}\right) &=
\sum_{\substack{ N = X \\ N = t \Mod D }}^{X + Y}  \mu^2(N) \chi(N) \chi\left(\frac{w(N-t)}{D} + l\right) \nonumber \\
& =\sum_{\substack{\delta^2 \leq X +Y}} \mu(\delta) \sum_{\substack{s \in [X/\delta^2, (X + Y)/\delta^2]\\ s\delta^2 = t \Mod D }} \chi(s \delta^2) \chi\left(\frac{w(s \delta^2-t)}{D} + l\right) \nonumber\\
&\ll \sum_{\substack{\delta \leq \sqrt{2X}\\(\delta, D) = 1\\(\delta, m) = 1 }} \Big| \sum_{\substack{x \in [\frac{X - t}{D}, \frac{(X + Y)-t}{D}] \\ x = -t D^{-1}  \Mod \delta^2}} \chi(Dx + t) \chi(wx + l) \Big|.
\end{align}

Here we restrict to $(\delta, m) = 1$ since terms with $(\delta, m) > 1$ clearly vanish, and to $(D, \delta) = 1$ because otherwise $s \delta^2 = t \Mod D$ cannot hold (as we assumed $(t, D) = 1$).

For each $\delta \leq \sqrt{2X}$ with $(\delta, m) = (\delta, D) = 1$, pick an integer solution $x_\delta$ to $x_\delta D = -t \Mod \delta^2$, and let \[I_\delta:= \left[\frac{X - t}{\delta^2D} -\frac{x_\delta}{\delta^2}, \frac{X + Y-t}{\delta^2D} - \frac{x_\delta}{\delta^2}\right]\] be an interval of length $Y/(\delta^2 D)$. Changing variables again,
\begin{align*}
(\ref*{eqq}) = &\sum_{\substack{\delta \ll \sqrt{X}\\ (\delta, D) = 1\\(\delta, m) = 1 }} \Big| \sum_{\substack{x \in I_\delta}} \chi(D(\delta^2 x +  x_\delta)  + r) \chi(w( \delta^2x + x_\delta) + l) \Big|\\
\leq& \sum_{\substack{\delta \ll \sqrt{X} \\(\delta, D) = 1\\(\delta, m) = 1 }}  \Big| \sum_{\substack{x \in I_\delta}} \chi(D\delta^2 x + (D x_\delta  + r))\chi(wx \delta^2 + (wx_\delta + l)) \Big|.
\end{align*}
 Note that the determinant \[D \delta^2 \cdot (wx_\delta + l) - (D x_\delta + r) w\delta^2 = \delta^2(D  l - r w) =  -q \delta^2\] satisfies $(q \delta^2, m) = (q, m)$.
 Thus by Lemma \ref*{shortintchar} and Lemma \ref*{characterbd} and using the condition $(\delta, m) = 1$,

\begin{align*}
  \sum_{\substack{x \in I_\delta}} \chi(D\delta^2 x + (D x_\delta  + r))\chi(w \delta^2 x + (wx_\delta + l))  \ll \frac{\sqrt{\mathfrak{m} Y}}{\delta \sqrt{D}} + \frac{\mathfrak{m} Y}{\delta^2 D m} + 1.
\end{align*}
Summing over $\delta$, we conclude that 
\begin{align*} 
\sum_{\substack{ N = X \\ N = t \Mod D }}^{X + Y}  \mu^2(N) \chi(N) \chi\left(\frac{wN - q}{D}\right) &\ll  \frac{\sqrt{\mathfrak{m}Y} \log X}{\sqrt{D}} + \frac{Y \mathfrak{m}}{ D m} + \sqrt{X}.
\end{align*}
\end{proof}

Applying the lemma to $D = d^2, w = r^2$, and $q = 4P$, and using that the terms with $P \nmid N$ contribute an $O(1)$ error term for $P \gg X$, we get an immediate corollary:
\begin{corollary}\label{alltogethercor}
Let $(r, d)$ be an admissible pair of integers and let $n \in \N$. Let $$\mathcal{P}_n =  \{p |n :  2 \nmid \upsilon_p(n), p \neq P\},$$ and let 
$$\mathfrak{n}_n := \frac{n}{\prod_{p \in \mathcal{P}} \sqrt{p}/2}.$$  Then:

\[\mathcal{S}_{d, n, r} \ll \sqrt{X} + \frac{\mathfrak{n}_n }{nd^2} Y + \frac{\log X \sqrt{\mathfrak{n}_nY} }{d}.\]
\end{corollary}

\begin{proof}[Proof of Proposition \ref*{smalldlargem}]
From Corollary \ref*{alltogethercor},

\begin{align*}
\sum_{n \in [Y^\sigma, T]} \sum_{d < Y^\tau} \frac{\mathcal{S}_{d, n, r}}{nd} &\ll \sum_{m \in [Y^\sigma, T]} \sum_{d < Y^\tau} \frac{\sqrt{X}}{nd} + \frac{ \mathfrak{n}_n Y}{n^2 d^3} + \frac{\log X \sqrt{Y} \sqrt{\mathfrak{n}_n} }{nd^2} \\
&\ll \log T \log X \sqrt{X} + Y \hspace{-0.15in}  \sum_{n \in [Y^\sigma, T]}\frac{\mathfrak{n}_n}{n^2} + \sqrt{Y} \log X \sum_{n \in [Y^\sigma, T]}  \frac{\sqrt{\mathfrak{n}_n}}{n}.
\end{align*}
Every integer $n$ is representable uniquely in the form 
\[n = a^2 b \  P^c, \] where $b$ is square-free and $(a, 2P) = (b, 2P) = 1$. In terms of this representation, $\mathcal{P}_n = \{ p |b\}$, so with the divisor bound,

\[\mathfrak{n}_n \leq a^2 b^{1/2 + \eps} \ P^c.\]
Thus
\begin{align*}
\sum_{n \leq T}\frac{\sqrt \mathfrak{n}_n}{n} &\ll \sum_{\substack{c: P^c \leq T}} \sum_{a \leq \sqrt{\frac{T}{P^c}}} \sum_{b \leq \frac{T}{a^2 P^c} }  \frac{1}{P^{c/2} \ a\  b^{3/4 - \eps}} \ll   \sum_{a \leq \sqrt{T}}(1/a) \left( T/a^2\right)^{1/4 + \eps} \ll T^{1/4 + \eps}.\\
\end{align*}
Similarly, 
$$\sum_{n > Y^\sigma} \frac{\mathfrak{n}_n}{n^2} \ll\sum_{c} \sum_{a \in \N} \frac{1}{ P^c a^2} \sum_{b > Y^\sigma/a^2P^c} b^{-3/2 + \eps} \ll\sum_{a \leq Y^{\sigma/2}} \frac{1}{a^2}\frac{a}{Y^{\sigma/2 - \eps}} +   \sum_{a > Y^{\sigma/2}} \frac{1}{a^2} \ll Y^{-\sigma/2 + \eps}. $$
To summarize,
$$\sum_{n \in [Y^\sigma, T]} \sum_{d < Y^\tau} \frac{\mathcal{S}_{d, n, r}}{nd}  \ll \log T \log X \sqrt{X} + Y^{1 - \sigma/2 + \eps} + \sqrt{Y} T^{1/4 + \eps} \log X $$ as desired.
\end{proof}
\subsubsection{Proof of Proposition \ref*{hardaveragemid}}\label{propproof}
\begin{proof}
Plugging Proposition \ref*{smalldn} and \ref*{smalldlargem} into identity (\ref*{identity}) with $\tau = 1/18$, $\sigma = 5/9$ gives 
\begin{multline*}
\sum_{\substack{ N \in [X, X + Y] \\ N \sqf \\ P \nmid N }} H_1(r^2 N^2 - 4 PN) =   \frac{Y B \nu(r)}{\zeta(2)}  \frac{ \sqrt{4 P X - r^2X^2}}{\pi }  +\\
+\O\Big(\Big( \frac{YPX}{T}+ \frac{Y^2 P^{\frac{1}{2}}}{ X^{\frac{1}{2}}} + r Y^{\frac{3}{2}} X^{\frac{1}{2}} +  (PX)^\frac{1}{2}Y^{\frac{8}{9}} 
+\sqrt{P} X Y^{\frac{5}{18}}  + \sqrt{Y} T^{\frac{1}{4}} \sqrt{PX} \Big)(PXT)^\eps\Big) ,
\end{multline*}
Setting \[T:= (YPX)^{\frac{2}{5}}\] gives the claimed error term. 
\end{proof}

\subsubsection{Proof of Proposition \ref*{hardaverage}}\label{propdeduction}
It remains to bound the contribution of $r$ with $2 \sqrt{\frac{P}{X + Y}}\leq r \leq 2 \sqrt{\frac{P}{X}}$. Firstly, note that for $r$ in this range and $N \in[X, X + Y]$,
\[4 P N - r^2 N^2 \leq 4PN\left(1 - \frac{N}{X + Y} \right) \ll YP,\] so $H_1(r^2 N^2 - 4 PN) \ll (YP)^{1/2 + \eps}$ and

\begin{align*}
\frac{1}{Y X} \sum_{\substack{ N \in [X, X + Y] \\ P \nmid N }} \sum_{r = 2 \sqrt{\frac{P}{X + Y}}}^{ 2 \sqrt{\frac{P}{N}}} H_1(r^2 N^2 - 4 PN) &\ll  \frac{1}{XY} \cdot Y \sqrt{P}\left(\frac{1}{\sqrt{X}} - \frac{1}{\sqrt{X + Y}}\right)  (PY)^{1/2 + \eps}  \\
&\ll P^{1 + \eps} Y^{3/2}  X^{-5/2 + \eps}.
\end{align*}
Moreover, since $\nu(r) \ll 1$,
\[\sum_{2 \sqrt{\frac{P}{X + Y}}}^{ 2 \sqrt{\frac{P}{X}}} \nu(r)  \sqrt{4 y- r^2} \ll \frac{\sqrt{PY}}{X} \cdot  \sqrt{P}\left(\frac{1}{\sqrt{X}} - \frac{1}{\sqrt{X + Y}}\right) \ll  PY^{3/2} X^{-5/2}. \]  
 Since $PY^{3/2} X^{-5/2} \ll  PY^{1/2} X^{-3/2}$, this with Proposition \ref*{hardaveragemid} concludes the proof by summing the error term in the Proposition over $r \ll \sqrt{P/X}$.

\subsection{$P$-divisible levels, remaining terms, and the dimension formula}\label{allotherpn}

In this section we conclude Theorem \ref*{mainthm} from Propositions \ref*{easyaverage}, \ref*{hardaverage}. First, we address the levels $N$ divisible by $P$, which have been excluded from consideration so far. When $P | N$, $\lambda_f(P) \sqrt{P} = \pm 1$, so we trivially get that 
\[\frac{1}{XY} \sum_{\substack{N \in [X, X + Y] \\ P | N}} \abs{\sqrt{P} \lambda_f(P) \eps(P)} \ll \frac{1}{XY} \cdot k X \left(\frac{Y}{P} + 1\right) \ll \frac{k}{P} + \frac{k}{Y} .\]
To sum the $\delta_{k = 2}P$ term, we use the well-known asymptotic
\begin{align}\label{squarefree}
\sum_{N \leq Z} \mu^2(N) = \frac{Z }{\zeta(2)} + \O(\sqrt{Z}).
\end{align}
Combining Proposition \ref*{easyaverage}, Proposition \ref*{hardaverage}, and (\ref*{squarefree}), expressing the error term in terms of $\delta_2$ and $\delta_1$, and using that $U_{k - 2}(x)\ll k\text{ for }x \in [-1, 1],$ we conclude that

\begin{align}\label{staar}
\frac{\zeta(2) \pi}{Y X }\sideset{}{^\square}\sum_{ N \in [X, X + Y]}\sum_{f \in H^{\text{new}}(N, k)}  \lambda_f(P) \sqrt{P} \  \eps(f)  &= A \sqrt{y}  +  (-1)^{k/2 - 1} B \sum_{1 \leq r \leq 2 \sqrt{y}} \nu(r) \sqrt{4 y - r^2} U_{k - 2}\left(\frac{r}{2 \sqrt{y}} \right) \nonumber \\
&  - \delta_{k = 2}\pi  y + \O_\eps\left( kX^{-\delta' + \eps } +  \frac{k}{Y}+ \frac{k}{P} \right):= (*)
\end{align}
where 
\[\delta' = \min \left\{\frac{5}{19} - \frac{11 \delta_1}{19} - \frac{16 \delta_2}{19}, \frac{1}{5} - \frac{11 \delta_1}{10} - \frac{2 \delta_2}{5}, \frac{1}{9} - \delta_1 - \frac{ \delta_2}{9}, \frac{2}{9} - \delta_1 - \frac{13 \delta_2}{18}, \frac{\delta_2}{2} - \delta_1. \right\}\]
Solving this for $0 < \delta_{1, 2} < 1$, this minimum is positive exactly when
\[0 < \delta_1 < 1/11, \ \  2 \delta_1 < \delta_2 < 1/13 (4 - 18 \delta_1).\]

Finally, we address the denominator in Theorem \ref*{mainthm}.

 From the work of Martin (\cite{greg}), for a square-free level $N$, the dimension of the space of cusp newforms for $\Gamma_0(N)$ is given by
\begin{align}\label{dimformula}
\dim S^{\text{new }}(N, k) = \frac{(k-1) \phi(N)}{12} + O(N^\eps).
\end{align}
We can compute the size of this expression on average:
\begin{lemma}\label{phisum}

One has 

$$\sum_{n \leq Z} \mu^2(n) \phi(n) =  \frac{Z^2}{2\zeta(2)} \prod_p\left(1 - \frac{1}{p^2 + p} \right) +  \O(Z^{3/2 + \eps})$$
\end{lemma}

\begin{proof}
We use the asymptotic 
\[\sum_{m \leq X/a} \phi(am) = \frac{1}{2 \zeta(2)} \frac{1}{a \prod_{p | a} (1 + 1/p)} X^2 + O(X \log X)\]
(see \cite{Postnikov}, section $4.2$).  
From this, 
\begin{multline*}
 \sum_{n \leq Z} \mu^2(n) \phi(n)=\sum_{d \leq \sqrt{Z}} \mu(d) \sum_{m \leq Z/d^2} \phi(d^2 n)   = \frac{Z^2}{2 \zeta(2)} \sum_{d \leq \sqrt{Z}}\frac{ \mu(d)}{d^2 \prod_{p | d} (1 + 1/p)} + \O\left( Z^{3/2 + \eps} \right)\\
= \frac{Z^2}{2\zeta(2)} \prod_p\left(1 - \frac{1}{p^2 + p} \right) +  \O\left( Z^{3/2 + \eps} + Z^{2 + \eps}/\sqrt{Z} \right)
\end{multline*}
as aimed.
\end{proof}

From Lemma \ref*{phisum},

\begin{align}\label{dimdim}
 \sideset{}{^\square}\sum_{\substack{ N \in [X, X + Y] }} \sum_{f \in H^{\text{new}}(N, k)} 1 &=  \sum_{\substack{ N \in [X, X + Y] }}  \frac{k-1}{12} \phi(N) \mu^2(N) + \O( Y X^\eps ) \nonumber\\
&= \frac{ (k-1) (X Y)}{12 \cdot \zeta(2)} \prod_p\left(1 - \frac{1}{p^2 + p} \right)  +\O(kX^{3/2 + \eps}),
\end{align}
and 
\[ \frac{1}{ \sum^\square_{\substack{ N \in [X, X + Y] }} \sum_{f \in H^{\text{new}}(N, k)} 1} = \frac{12 \cdot \zeta(2) \prod_p\left(1 - \frac{1}{p^2 + p} \right)^{-1}}{ (k-1)( X Y )}  +  \O\left(\frac{1}{k Y^2 X^{1/2 - \eps}}\right).\]  

Now, $(\ref*{staar})$ implies that the the numerator of the left-hand side of Theorem \ref*{mainthm} is bounded by $X Y k y + XY k = k Y P + k Y X.$ Thus 

\begin{align*}
\frac{\sum^\square_{N \in [X, X + Y]}\sum_{f \in H^{\text{new}}(N, k)}  \lambda_f(P) \sqrt{P}   \eps(f) }{ \sum^\square_{ N \in [X, X + Y] } \sum_{f \in H^{\text{new}}(N, k)} 1} =  (*)\cdot  \frac{  12 \prod_p\left(1 - \frac{1}{p^2 + p} \right)^{-1} }{(k - 1) \pi} + \O\left(\frac{P}{X^{1/2-\eps} Y}  + \frac{X^{1/2 + \eps}}{Y}\right).
\end{align*}
This is again a power saving error term in the range of $\delta_{1, 2}$ as above. Simplifying the Euler products then completes the proof of Theorem \ref*{mainthm}.

\section{Geometric Averaging}
In this section we complete the proof of Theorem \ref*{thm2} and analyze the asymptotic behavior of the dyadic average. 
\begin{proof}[Proof of Theorem \ref*{thm2}]
Let $Z:= cX$, and let $\delta_2$ be a parameter chosen depending on $\delta_1$ to satisfy the conditions of Theorem \ref*{mainthm} with a powersaving error term. Assume further that $Y \sim X^{1 - \delta_2}$ is chosen so $Y$ divides $Z - X$; let $X = X_1, X_2, \ldots, X_G = Z - Y$ be given by $X_g = X + (g-1)Y,$ where $G:= (Z-X)/Y$. From (\ref*{staar}) and from (\ref*{dimdim}),
\begin{align*}
\sideset{}{^\square}\sum_{\substack{ N \in [X, Z] }}\sum_{f \in H^{\text{new}}(N, k)}  \lambda_f(P) \sqrt{P}  \eps(f) &= \sum_g \sideset{}{^\square}\sum_{\substack{ N \in [X_g, X_{g + 1}] }}\sum_{f \in H^{\text{new}}(N, k))}  \lambda_f(P) \sqrt{P}  \eps(f)\\
&= \sum_g  \frac{(k - 1) \prod_p\left(1 - \frac{1}{p^2 + p} \right) }{ 12 \cdot \zeta(2)}  X_g Y  \mathcal{M}_k\left(\frac{P}{X_g}\right) + o\left(X^{2 - \delta}\right)
\end{align*}  
for some small $\delta >0$.
Since for $u \in [X, Z]$, $\frac{P}{u} - \frac{P}{u + Y} \ll \frac{PY}{X^2} = o(1),$ we can approximate this sum as $X \to \infty$ by the integral, so

\begin{align*}
\sum_g X_g Y \mathcal{M}_k(P/X_g) =  \int_X^Z u \mathcal{M}_k(P/u) du + o(1) =  X^2 \int_1^c \mathcal{M}_k(y/u) u du.
\end{align*}
Finally, from (\ref*{dimformula}), we can again compute that 

\[ \frac{1}{\sum^\square_{\substack{ N \in [X, Z] }} \sum_{f \in H^{\text{new}}(N, k)} 1} = \frac{24 \cdot \zeta(2) \prod_p\left(1 - \frac{1}{p^2 + p} \right)^{-1}}{(c^2 - 1)(k - 1) X^2}  +  o\left(X^{-2}\right),\]  
so

\begin{align*}
\frac{\sum^\square_{\substack{ N \in [X, Z]}}\sum_{f \in H^{\text{new}}(N, k))}  \lambda_f(P) \sqrt{P}  \eps(f) }{\sum^\square_{ N \in [X, Z]}\sum_{f \in H^{\text{new}}(N, k))} 1} =   \frac{2}{ (c^2 - 1)} \int_1^c \mathcal{M}_k(y/u) u du  + o_c(1)
\end{align*} 
as aimed. 

Finally, when $k = c = 2$ and $y \in [0, 1]$, the integral becomes
\[\frac{2}{ 3} \int_1^2  \alpha \sum_{1 \leq r \leq 2 \sqrt{y/u}} \nu(r) \sqrt{4 y u - r^2 u^2} + \beta \sqrt{y u} - \gamma y  \ du.\]
For $y < 1/4$, the sum on $r$ is empty, so the integral evaluates to
\[\frac{4}{9}(2^{3/2} - 1) \beta \sqrt{y} - \frac{2}{3} \gamma y.\]
On $[1/4, 1/2]$, there is an additional term $r = 1$ that appears for $u < 4y$, that is,
\begin{multline*}
\frac{2\alpha}{3} \int_1^{4y} \sqrt{4y u - u^2} du = \frac{2\alpha}{3}  4 y^2 \int_{1/2y - 1}^1 \sqrt{1 - v^2} dv \\
=  \frac{2\alpha}{3} y^2 \pi + \frac{2\alpha}{3}2 y^2  \arcsin(1 - 1/2y) + \frac{2\alpha}{3}(2y - 1)\sqrt{y - 1/4}. 
\end{multline*}
Finally, for $1> y > 1/4$, the $r = 1$ term is present for all $u$, and we get 
\begin{multline*}
\frac{2\alpha}{3} \int_1^{2} \sqrt{4y u - u^2} du = \frac{2\alpha}{3}  4 y^2 \int_{1/2y - 1}^{2/2y - 1} \sqrt{1 - v^2} dv \\
=  \frac{2\alpha}{3}2 y^2  \arcsin( 1/y-1 ) + \frac{2\alpha}{3}2 (1- y )\sqrt{2y - 1} + \frac{2\alpha}{3}2 y^2  \arcsin(1 - 1/2y) + \frac{2\alpha}{3}(2y - 1)\sqrt{y - 1/4}. 
\end{multline*}
\end{proof}

\section{Properties of the density function $\mathcal{M}_k$.}
In this section we prove Theorem \ref{tm3}. For $t > 0$, let 
\[f_c(x) := \sqrt{1 - t^2 x^2} U_{k - 2} (tx) \chi_{[-1/t, 1/t]},\] where $\chi$ denotes the characteristic function of an interval. Then
\[\widehat{f}_t(s) = (-1)^{k/2 - 1} \frac{( k- 1) J_{k - 1} ( 2\pi \abs{s}/t)}{2 \abs{s}}.\] 
Recall that 
\[Q(d)= \mu^2(d) \prod_{p | d} \frac{p^2}{p^4 - 2 p^2 - p + 1}\] and 
\[ \nu(r) =  \prod_{p | r}\left( 1 + \frac{p^2}{p^4 - 2 p^2 - p + 1}\right) = \sum_{\substack{d | r \\ d \in \N}} Q(d),\]
so
\begin{align*}
\sum_{1 \leq r \leq 2 \sqrt{y}} \nu(r) \sqrt{4 y - r^2} U_{k - 2}\left(\frac{r}{2 \sqrt{y}} \right)  &= \sum_{\substack{ d \in \N}} Q(d) 2 \sqrt{y} \sum_{1 \leq t \leq \frac{2 \sqrt{y}}{d}} \sqrt{1 - \frac{d^2 t^2}{4 y}}  U_{k - 2}\left(\frac{dt }{2 \sqrt{y}} \right) \\
  &= \sum_{\substack{ d \in \N}} Q(d)  \sqrt{y} \sum_{s \in \Z \setminus \{0\}} f_{\frac{d}{2 \sqrt{y}}} (s).
\end{align*}
Applying Poisson summation to $f_{d/2 \sqrt{y}}$, 
\[ \sum_{s \in \Z \setminus \{0\}} f_{\frac{d}{2 \sqrt{y}}} (s)= \sum_{s \in \Z} \widehat{f}_{\frac{d}{2 \sqrt{y}}} (s) -  U_{k - 2}(0)=  \sum_{s \in \Z} \widehat{f}_{\frac{d}{2 \sqrt{y}}} (s) +  (-1)^{k/2},\]  that is, 
\begin{align*}
\sum_{1 \leq r \leq 2 \sqrt{y}} \nu(r) \sqrt{4 y - r^2} U_{k - 2}\left(\frac{r}{2 \sqrt{y}} \right)  &= \sqrt{y} \sum_{\substack{d \in \N}} Q(d)\sum_{t \in \Z} \widehat{f}_{\frac{d}{2 \sqrt{y}}} (t) + \sqrt{y} \sum_{\substack{  d \in \N}} Q(d)  (-1)^{k/2}.
\end{align*}
From the matching of Euler product factors at every $p$ that
\[\sum_{d \in \N} Q(d) = \beta/\alpha,\] and thus we can rewrite the function $\mathcal{M}_k(y)$ as
\begin{align*}
(k - 1) \mathcal{M}_k(y)  &=   \sqrt{y} (-1)^{\frac{k}{2} - 1} \alpha  \sum_{  d \in \N} Q(d)  \sum_{t \in \Z} \widehat{f}_{\frac{d}{2 \sqrt{y}}} (t)   - \gamma \delta_{k = 2}  y   \\
&=   \sqrt{y} \alpha  \sum_{ d \in \N} Q(d)  \sum_{s \in \Z} \frac{(k - 1) J_{k - 1}(4 \pi |s|\sqrt{y}/d)}{2 |s|}  -\gamma  \delta_{k = 2}  y   \\
&=     \sqrt{y} \alpha  \sum_{ d \in \N} Q(d)  \sum_{s \in \N} \frac{(k - 1)J_{k - 1}(4 \pi |s|\sqrt{y}/d)}{ |s|} +  \delta_{k = 2}   \pi y \alpha    \sum_{  d \in \N} Q(d)/d  - \gamma \delta_{k = 2}  y .
\end{align*}
Observe again from the matching of the $p$-part of the Euler products that

\begin{multline*}
(\alpha/\gamma) \sum_{d \in \N}  Q(d)/d = \frac{\pi}{6} \prod_p\left(\frac{1 - p - 2 p^2 + p^4}{p^4 - 2 p^2 + p}\right) \left( \frac{-1 + p + p^2}{p (1 + p)} \right) \left( 1 + \frac{p^2}{p^4 - 2 p^2 - p + 1}\right)\\
=  \frac{\pi}{6} \prod_p\left( 1 - \frac{1}{x^2} \right) = \frac{1}{\pi}.
\end{multline*} Thus the $\delta_{k = 2}$ terms cancel out, and we arrive to the formula in Theorem \ref*{tm3}: 
\[\mathcal{M}_k(y) =    \sqrt{y} \alpha \sum_{d \in \N} Q(d)  \sum_{s \in \N} \frac{J_{k - 1}(4 \pi s\sqrt{y}/d)}{s} .\]
Finally, applying the asymptotic \[J_{k - 1}(z) = (-1)^{k/2} \sqrt{\frac{2}{\pi z}} \cos(z - 3 \pi/4) + \O(\min\{1/z^{3/2}, 1\}),\] we get the asymptotic expansion
\[\mathcal{M}_k(y) \sim  (-1)^{k/2} y^{1/4}  \sqrt{\frac{2}{\pi}}  \alpha \sum_{d \in \N} \frac{Q(d) \sqrt{d}}{s^{3/2}} \sum_{s \in \N}\cos \left(\frac{4 \pi s\sqrt{y}}{d} - \frac{3 \pi}{4} \right)\] with an error term of \[ \sum_{s \in \N}1/s \left(y^{-1/4}\sum_{d \ll s \sqrt{y}} d^{-1 /2} s^{-3/2} + \sqrt{y}\sum_{d \gg s \sqrt{y}} 1/d^2\right) \ll 1,\] as aimed.

\section{Asymptotics of Smooth Geometric Averages}

Finally, we analyze the asymptotic behavior of the smoothed murmuration function.

\begin{proof}[Proof of Theorem \ref*{thm4}]
Let $f(x) =1/x J_{k - 1} (1/x)$. The Mellin transform of $f$ is given by 
\begin{align*}
\tilde{f}(s)  = \frac{1}{2^{s}} \frac{\Gamma\left(\frac{k}{2}  -  \frac{s}{2}\right)}{\Gamma\left(\frac{k}{2}  + \frac{s}{2}\right)}
\end{align*}
on the strip $(k - 1, k )$.
By Stirling approximation, on this strip,

\[\abs{\tilde{f}( \sigma + i t)} \ll t^{- \Re(s)}, \] so we can apply Mellin inversion, which implies that
\[F(x) :=  \sum_{d \geq 1}d Q(d) f(d/x) =  \sum_{d \geq 1}Q(d) \frac{1}{2 \pi i} \int_{\Re(s) = k -1/2} \tilde{f}(s) x^s d^{-s+1}  ds  = \frac{1}{2 \pi i}\int_{\Re(s) = k - 1/2} L(s-1) \tilde{f}(s) x^s ds,\] where 

\begin{multline*}
L(s):= \sum_d Q(d) d^{-s} = \prod_p   \left(1 +  \frac{p^2}{p^4 - 2 p^2 - p + 1} p^{-s}\right) \\
=  \frac{ \zeta(s + 2)}{\zeta(2s + 4)} \prod_p \left( 1 + \frac{-1 + p + 2 p^2}{(1 - p - 2 p^2 + p^4) (1 + p^{2 + s})}\right). 
\end{multline*} For a function $\Phi:(0, \infty) \to \R$ of compact support, let  
\[F_\Phi(x):= \left(\int_0^\infty F\left( \frac{x}{u}\right) \Phi(u) u^2\frac{d u}{u} \right)/\tilde{\Phi}(2).\]
Then by Mellin inversion, 
\begin{align}\label{smoothedin}
F_\Phi(x) = \frac{1}{2 \pi i}\int_{\Re(s) = k - 1/2} L(s-1) \tilde{f}(s) \tilde{\Phi}(s + 2) x^s ds.
\end{align}
The Euler product part in the expression above converges uniformly for $s > \sigma$ for any $\sigma > -3$, so evaluated at $s-1$, it is analytic and uniformly bounded in the region $\Im z > -\sigma$ for $\sigma > -2$.
The Mellin transform $\tilde{\Phi}$ is entire because of the support assumption, and if $\Phi$ is smooth, the decay of $\Phi$ in the $t$ aspect allows us to shift the contour in (\ref*{smoothedin}) to the line $\Re(s) = - \sigma$ for some $\sigma > -1/2$ (indeed, the $L$-function grows at most like $\abs{t}^{\sigma/2}$ on this strip, whereas $\tilde{f} \ll \abs{t}^{\sigma}$). The residue at $s = 0$ is given by 

\[\mathrm{r} := \frac{1}{\zeta(2)}  \prod_p \left( 1 + \frac{-1 + p + 2 p^2}{(1 - p - 2 p^2 + p^4) (1 + p)}\right) \widetilde{\Phi}(2). \] In summary, 

\[F_\Phi(x) = r + O_\sigma(x^{-\sigma}), \sigma \in [-1/2, 0]. \]
We now apply this to our function. The function \[\mathcal{M}^k_\Phi(x) = \int_0^\infty \left(\mathcal{M}_k(x/u) \Phi(u) u^2 \frac{du}{u}\right)/\left(\int_0^\infty\Phi(u) u^2 \frac{du}{u}\right)\] converges to $1/2$ if an only if $\mathcal{M}^k_\Phi(x^2)$ does. Furthermore,

\[\int_0^\infty \mathcal{M}_k(x^2/u) \Phi(u) u^2 \frac{du}{u} =  \int_0^\infty \mathcal{M}_k\left(\frac{x^2}{v^2}\right)\left( 2 v^2 \Phi\left(v^2\right) \right)   v^2 \frac{dv}{v} =  \int_0^\infty \mathcal{M}_k\left(\frac{x^2}{v^2}\right)\Phi_1(v)   v^2 \frac{dv}{v}, \] where 
\[\Phi_1(v) =  2 v^2 \Phi\left(v^2\right),\] and 
\[\widetilde{\Phi_1}(2) = 2\int_0^\infty v^2 \Phi(v^2) v dv=  \int_0^\infty v\Phi(v)  dv = \tilde{\Phi}(2).\]
Thus it suffices to prove the statement for the function $\mathcal{M}'_k(y) = \mathcal{M}_k(y^2)$.  Now, setting $x_s = 4 \pi s y$ for $s \in \N$, we have
\begin{multline*}
 \mathcal{M}'_k(y/u) =\frac{\alpha}{4 \pi}  \sum_{d, s \in \N}   \frac{4 \pi s y}{u }  Q(d)  \frac{J_{k - 1}\left(\frac{4 \pi sy/u}{d}\right)}{s^2}
\\= \frac{\alpha}{4 \pi} \sum_{s \in \N} \frac{x_s}{u} \sum_d Q(d) \frac{J_{k - 1}\left(\frac{x_s}{d u}\right)}{s^2} = \frac{\alpha}{4 \pi} \sum_{s \in \N} \frac{1}{s^2} F(x_s/u)\end{multline*}
Hence, 
\begin{multline*}
\mathcal{M}'_\Phi(y) =  \frac{\alpha}{4 \pi} \sum_{s \in \N} \frac{1}{s^2} F_\Phi(x_s) /\tilde{\Phi}(2)=   \frac{\alpha}{4 \pi} \zeta(2) \frac{r}{\tilde{\Phi}(2)} + O\left(\sum_{s \in \N} \frac{1}{s^2} (sy)^{-\sigma}\right)\\ =  \frac{\alpha }{4 \pi} \prod_p \left( 1 + \frac{-1 + p + 2 p^2}{(1 - p - 2 p^2 + p^4) (1 + p)}\right) + \O(y^{-\sigma}) = \frac{1}{2} + O(y^{-\sigma}).
\end{multline*}
\end{proof}

\section{Asymptotics of Sharp Geometric Averages}

In this section we prove Theorem \ref*{thm5}.

\begin{lemma}\label{J}
For every $n \in \N$ and every $K \geq n + 1$ such that $2 \nmid n + K$, the indefinite integral 
\[\int J_{K}(x)/x^n dx \] can be expressed as 
\[ \sum_{t \geq n } \frac{c_t J_t(x)}{x^{n}}, \] where $c_t \in \R$ are coefficients depending on $K, n$.
\end{lemma}
\begin{proof}
We prove this by simultanious induction on $K-n$. Firstly, for all $K$, 
\[\int J_{K}(x)/x^{K-1} dx = - J_{K- 1}(x)/x^{K-1}.\] Suppose now that the statement holds for all $n$ for indices up to $K:= n + 1+ 2T$. Then:
\[\int J_{K+2}(x)/x^n dx = 2(K + 1) \int J_{K + 1}(x)/x^{n + 1} dx - \int J_{K}/x^n dx.\] Applying the induction step and using the relation 
\[2k J_k(x)/x = J_{k - 1} (x) + J_{k + 1}(x)\] then completes the proof.
\end{proof}
Applyinf asymptotics of Bessel functions at infinity, we get an immediate corollary:
\begin{corollary}\label{integralbound}
For all $K \geq 5$, is a choice of an indefinite integral such that
\[\int J_{K}(u)/u^4 du \ll u^{-4.5}.\]
\end{corollary}

\begin{lemma}\label{Qcount}
Assume RH for $\zeta(s)$. For a parameter $T$,
\[\sum_{d \leq T} Q(d) = L - \frac{\Delta}{T} + R(T),\] where $L = \beta/\alpha$, $R(T) \ll T^{-2 + \eps}$, and \[\Delta:= \frac{1}{\zeta(2)} \prod_p \left(1 +  \frac{2 p-1 }{(x^4 - 2 x^2 - x + 1)}\right).\]
\end{lemma}
\begin{proof}
Let 
\begin{multline*}
L(s):= \sum_d Q(d) d^{-s} = \prod_p   \left(1 +  \frac{p^2}{p^4 - 2 p^2 - p + 1} p^{-s}\right) \\
=  \frac{ \zeta(s + 2)}{\zeta(2s + 4)} \prod_p \left( 1 + \frac{-1 + p + 2 p^2}{(1 - p - 2 p^2 + p^4) (1 + p^{2 + s})}\right). 
\end{multline*} 

As this series converges absolutely for $\sigma > 0$, we have by Perron's formula that for $T$, 
\[\sum_{d < T} Q(d) = \frac{1}{2 \pi i} \int_{1 \pm i \infty}L(s) T^s \frac{ds}{s}.\] 

Shifting the contour to the line $-2 + \eps$, we pick up a pole at $0$ with residue $L$, and at $-1$ with residue $-\Delta$, with the claimed error term. 

\end{proof}
We now prove Theorem \ref*{thm5}.
\begin{proof}[Proof of Theorem \ref*{thm5}]
For convenience of notaiton, we analyze $\mathcal{M}^k_c( y^2/(4 \pi)^2)$ to show onvergence to $1/2$.
From Lemma \ref*{J}, we can express 
\[\int J_{k - 1}(x)/x^4 dx =: J(x)/x^4 = \sum_{t \geq n} c(t) J_t(x)/x^4\] for some linear combination $J$ of Bessel functions of integer index $\geq 4$ with $c(t) \in \R$ and $c(t) = 0$ for all but finitely many $t$'s. Note that 
\[\int_0^\infty J(x)/x dx = -1/4.\] Then 
\begin{align}\label{JJ}
\int_1^c  \mathcal{M}_k( y^2/(4 \pi)^2 / u) u du &= \frac{\alpha}{4 \pi} \sum_{d, s} \frac{Q(d)}{s} y \int_1^c \sqrt{u} J_{k - 1}\left(\frac{s y}{d\sqrt{u}}\right) du \nonumber \\
 &=  \frac{\alpha}{2 \pi}  \sum_{d, s} \frac{ Q(d)}{s} y \int_1^{\sqrt{c}} u^2 J_{k - 1}\left(\frac{ys }{du }\right) du \nonumber \\
 &=  \frac{\alpha}{2 \pi}  \sum_{d, s} \frac{ Q(d)}{s} y \int_{1/\sqrt{c}}^1 J_{k - 1}\left(\frac{ys u }{d }\right)/u^4 du \nonumber \\
 &=  \frac{\alpha}{2 \pi}  \sum_{d, s} \frac{ Q(d)}{d^3} s^2 y^4 \int_{1/\sqrt{c} (ys/d)}^{ys/d} J_{k - 1}\left(u\right)/u^4 du\nonumber  \\
 &=  \frac{\alpha}{2 \pi}  \sum_{d, s} \frac{d Q(d)}{s^2}  \left(J\left(\frac{y s}{d \sqrt{c}} \right)c^2 - J\left(\frac{ys}{d}\right) \right)
\end{align} 
(which converges since $\frac{1}{s^2}\sum_{d > ys} d Q(d) J(ys/d) \ll \frac{1}{s^2} ys \sum_{d > ys} \frac{1}{d^2} \ll 1/s^2$.)
We analyze the function
\[\sum_{d \in \N}d  Q(d) J(a/d)\] for $a = a_s(u) = ys /uc'$ where $c' \in \{1, \sqrt{c}\}$. We need to understand this function up to a $o(1)$ error term, as by convergence such an error term would contribute $o(1)$ to (\ref*{JJ}). 
Let \[\phi(x) := x J(a/x).\] Then 
\[\phi'(x) = J(a/x) - (a/x) J'(a/x) = J(a/x) - J_{k - 1}(a/x) - 4 J(a x)/x.\]Since $J$ does not involve Bessel functions of index less than $2$, $\phi(x) \to 0$ as $x \to \infty$. Hence from Abel summation

\begin{align}\label{mainterm}
&\sum_{d} Q(d) a J_{k - 1}(a/d) = -\int_1^\infty (\beta - \frac{\Delta}{x} + R(x))\phi'(x) dx \nonumber \\
&=  -\int_1^\infty L \phi'(x) dx +  \int_1^\infty  \frac{\Delta}{x}\phi'(x) dx \nonumber  -\int_1^\infty R(x)\phi'(x) dx \nonumber \\ 
&:= -I + II - III.
\end{align}
We analyze this term by term.
\subsection{I}
Note that
\[I = -L x J(a/x) |_1^\infty= \beta J(a).\] This decays like $y^{-1/2}$ as $y \to \infty$.

\subsection{II}

\begin{align*}
II &= \int_1^\infty \frac{\Delta}{x} \phi'(a/x) dx =\frac{\Delta}{x}x J(a/x) |_1^\infty - \Delta \int_1^\infty \left(\frac{1}{x}\right)' x J (a/x) dx  \\
&\Delta J(a) + \Delta  \int_0^a  J( x) \frac{dx}{x}  = \Delta J(a) + \Delta(1/4) - \Delta \int_a^\infty  J( x)/xdx  .
\end{align*} Once again, the first and third terms decay as $y \to \infty$, and the second term contributes $\Delta/4$.

\subsection{III}

From the definition of $R(T)$ in Lemma \ref*{Qcount}, we can find a function $E: \N \to \R$ with $E(n) \ll n^\delta$ for any $\delta > 0$ such that for $t, T(t) := \floor{t}$, 
\[R(t) = \frac{E(T)}{T^2} - \frac{\Delta}{t} + \frac{\Delta}{T} =  \frac{E(T)}{T^2}  + \frac{\{t\}}{T t}\] so
\[III =  \int_1^\infty \frac{E(T)}{T^2}\phi'(x) dx +  \Delta \int_1^\infty  \frac{\{t\}}{T t}\phi'(x) dx =: a + b.\]
Then: 
\[a = \sum_{n \in \N} \frac{E(n)}{n^2} \int_{n}^{n+1} \phi'( x) dx = \sum_n \frac{E(n)}{n^2} \left(n J(a/n) - (n + 1) J(a/(n + 1)\right) =: \sum a_n.\]
Note that 
\[x J(a/x) \ll x  \sqrt{x}/\sqrt{a},\] so 
\[\sum_{n < a^p} a_n \ll a^{p/2 + \delta- 1/2} = o(1)\] as long as $p < 1$. 
Next, note that 
\[x J(a/x) = a f(a/x),\] where  $f(x) = J(x)/x$ is a function with a uniformly bounded derivative. Hence 
\[ \sum_{n > a^p} a_n =  \sum_{n > a^p} \frac{1}{n^{2 - \delta}} a \left(\frac{a}{n} -\frac{ a}{n + 1}\right) = \frac{a^2}{n^{3p-\delta}} = o(1)\] as long as $p > 2/3$.

Finally we address $b$. Note that 
\begin{align*}
\frac{1}{T}\int_{T}^{T + 1} \frac{\{t\}}{t }\phi'(t) dt &=\frac{J(a/(T + 1))}{T} - \frac{1}{T} \int_T^{T + 1} \left(\frac{t - T}{t }\right)' t J(a/t) dt\\
& =\frac{J(a/(T + 1))}{T} -  \int_{a/T}^{a/(T + 1)}  J_{k - 1}(t)/t \ dt \\
\end{align*} so 

\begin{align*}
b &=-  \Delta \int_0^a J(x)/x dx + \Delta \sum_{n \in \N} \frac{J(a/(n + 1))}{n}\\
& = - \Delta (1/4)+  \Delta \int_a^\infty J(x)/x dx + \Delta \sum_{n \in \N} \frac{J(a/(n + 1))}{n}
\end{align*}
where the second term is again $o(1)$ since the integral from $0$ to $\infty$ converges. It remains to estimate the sum in the third term.
From the asymptotics of Bessel Functions, for a large parameter $A$,
\begin{multline*}
\sum_{n \in \N} J\left(\frac{a}{n + 1}\right)/n  = \sum_{n = a/A}^{a A} J\left(\frac{a}{n + 1}\right)/n  + O\left( \sum_{n < a/A} 1/\sqrt{a n} + \sum_{n > a A} a/n^2\right) \\
= \sum_{n = a/A}^{a A} J\left(\frac{a}{n + 1}\right)/n  + O(1/\sqrt{A}). 
\end{multline*}
Now, the sum
\[ \sum_{n = a/A}^{a A} J\left(\frac{a}{n + 1}\right)/n = \frac{1}{a} \sum_{n = a/A}^{a A} J\left(\frac{a}{n + 1}\right) \left(\frac{a}{n}\right)  \] is a Darboux sum of the function $f(x) = J(1/x) /x$ on the region $[A, 1/A]$ with distance $1/a$, so it converges to the integral 
\[\int_{1/A}^A J(1/x)/x dx =  \int_{1/A}^A J(x)/x dx. \]  Taking $A$ to $\infty$, this converges to $1/4$. 

To conclude, 

\[-I + II - III = \frac{\Delta}{4} - \frac{\Delta}{4} + \frac{\Delta}{4} + o(1) = \frac{\Delta}{4} + o(1),\] and

\[(\ref*{JJ})=o(1) + \sum_{s \in \N} \frac{1}{s^2} \frac{\Delta}{4} \frac{\alpha }{2 \pi}   (c^2 - 1) = \frac{c^2 - 1}{4} + o(1). \] This cancels out with the denominator to exactly $1/2$.

\end{proof}

\section{Universal Function Sign Changes}

In this section we analyze the sign changes of the function 
\[ \mathcal{M}(T) = \sum_{d } \frac{Q(d) \sqrt{d}}{s^{3/2}} \sum_{s \in \N}\cos \left(\frac{4 \pi s T}{d} - \frac{3 \pi}{4} \right) ,\]
which is the universal function from Theorem \ref*{tm3}, scaled down by the order of growth $y^{1/4}$ and with a change of coordinates $y = T^2$. In particular we provide a computational proof of Corollary \ref*{cor1}.

We will analyze this sum by truncating it on $d$. For a set $\mathcal{D}$ of square-free integers, define
\[\mathcal{M}_\mathcal{D}(T) = \sum_{d \in \mathcal{D} } \frac{Q(d) \sqrt{d}}{s^{3/2}} \sum_{s \in \N}\cos \left(\frac{4 \pi s T}{d} - \frac{3 \pi}{4} \right).\]
Observe that for a finite $\mathcal{D}$, this function becomes periodic, with a period
\[\mathcal{P}(\mathcal{D}) = \frac{1}{2}\mathrm{lcm}\{d: d \in \mathcal{D} \} \] and the error term from truncating $d$ is trivially bounded by 
\[\mathcal{E}(\mathcal{D}):= \sum_{d \not \in \mathcal{D} } Q(d) \sqrt{d} M_{max},\] where 
\[M_{max}:= \max_{x \in \R} \abs{ \sum_{s\in \N} \cos(x s - 3 \pi/4)s^{-3/2} }.\]
The function 

\[f(x) :=\sum_{s\in \N} \cos(4 \pi x s - 3 \pi/4)s^{-3/2}  = \frac{1}{2} (-1)^{3/4} \left(\mathrm{PolyLog}(3/2, e^{-4 \pi i x}) + i \mathrm{PolyLog}(3/2, e^{4 \pi i x})\right)\]
has period $1/2$ and is maximized at $0$ in absolute value: 
\begin{figure}[H]
  \centering
\includegraphics[width = \textwidth/2]{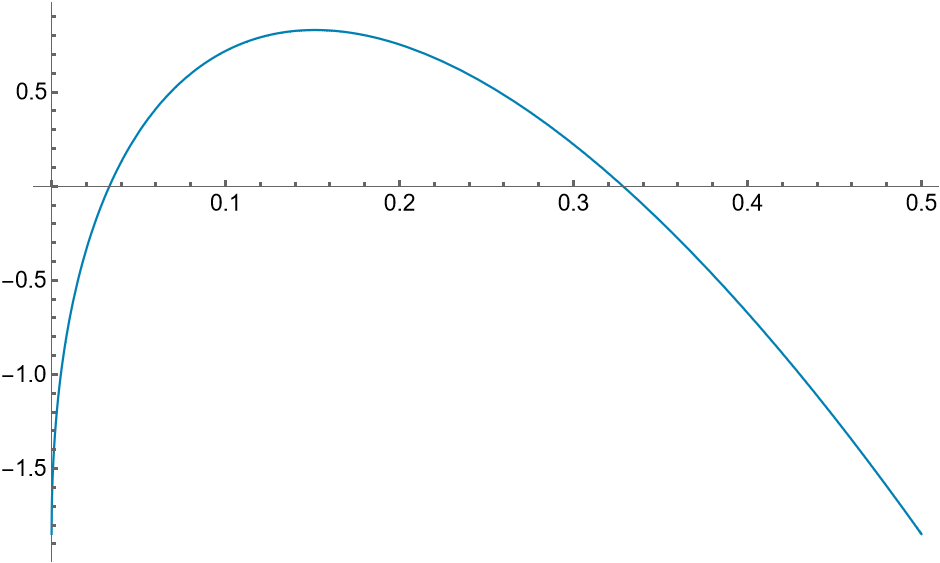}
  \caption{Graph of $f(x)$ for $x \in [0, 1/2]$}
\label{gr}
\end{figure}
hence
\[M_{max}= \frac{\sqrt{2}}{2} \zeta(3/2) \approx 1.84723.\]
We let 
\[\mathcal{D}:= \{1, 2, 3, 5, 6, 7, 10, 11, 13, 14, 15, 21, 22, 26, 30, 33, 35, 39, 42, 55, 65, 66, 70, 77, 78\},\] for which 
\[\mathcal{P}(\mathcal{D}) = \prod_{2 < p \leq 13} = 15015\] and 
\[\mathcal{E}(\mathcal{D}) \leq \left( 3.0907\footnotemark  -  \sum_{d \in \mathcal{D}} Q(d) \sqrt{d} \right) \mathcal{M} \leq 0.6306.\]
\footnotetext{the Euler product $\prod_{p < 10^8} (1 + Q(p) \sqrt{p})$ is $3.09064$ up to the $10^8$th prime. Note that for $p > 10^8$,  $Q(p) \sqrt{p} = 1/p^{3/2}\left(1 + \frac{2 p^2 + p - 1}{p^4 - 2 p^2 - p + 1} \right) \leq (1 + 10^{-16}) p^{-3/2}$; hence the multiplicative error term of this is at most $e^{ \sum_{p > 10^8} p^{-3/2}} \leq e^{2 \cdot 10^{-4}/\log 10^8} \leq 1 + 0.000011$ so this is within $0.00004$ of the limit.}
We then show that:

\begin{itemize}
\item for all $k \in \{1, \ldots, 15015\},$  $M_\mathcal{D}(k) < - \mathcal{E}(\mathcal{D})$;
\item for all $k \in \{1, \ldots, 15015\},$ $M_\mathcal{D}(k + 1/2) < - \mathcal{E}(\mathcal{D})$;
\item for all $k \in \{1, \ldots, 15015\},$ $M_\mathcal{D}(k + 0.162) >  \mathcal{E}(\mathcal{D})$,
\end{itemize}
where we use the upper numerical bound on $\mathcal{E}$.  
\newpage 

\begin{tcolorbox}[boxrule=0pt,sharp corners,parbox=false,breakable,colback=blue!10]
\begin{algorithmic}
\State $S \gets 0$
\State List $\gets  \mathcal{D}$
\For{$1 \leq k \leq 15015$} 
    \State $S \gets S + \mathrm{Boole}\left(\sum_{t = 1}^{\mathrm{Length}(\mathrm{List})} Q(\mathrm{List}[t]) \sqrt{\mathrm{List}[t]} f\left(\frac{k + 0.162}{\mathrm{List}[t]}\right) > 0.6306 \right)$
\EndFor \\
\Return $S$
\end{algorithmic}
\end{tcolorbox}
\begin{center}
{Pseudocode for the last example (executed with Mathematica)}
\end{center}
In all three cases, the conditions above are satisfied for all $1 \leq k \leq 15015$. From periodicity, it follows that the statement is true for all $k$. Since $\mathcal{M}_\mathcal{D}$ is within $\mathcal{E}(\mathcal{D})$ of $\mathcal{M}$ at every point, this proves positivity and negativity in these regions. 

From the graph of $M$, it appears as if there are two sign changes on every interval, with the function becoming positive again on $[k + 1/2, 1 + 1]$. However, while this is provably satisfied by the majority (at least $2/3$ of $k$'s between $1$ and $15015$, using the same method as above), this is not the case.

\begin{figure}[H]
  \begin{subfigure}[b]{3.2 in}
    \centering
    \includegraphics[width=3in]{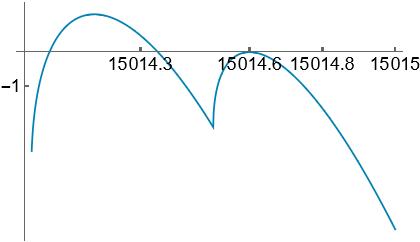}
  \end{subfigure}
  \hspace{0.11in}
  \begin{subfigure}[b]{3in}
    \centering
\vspace{-1in}
    \includegraphics[width=3.3 in]{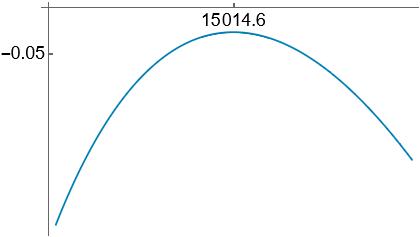}
  \end{subfigure}
\caption{Universal function $\mathcal{M}$ around $[15014, 15015]$ and at the second peak near $15014.6$.}
\end{figure}
\vspace{-0.1in} 

 For  $\mathcal{D'} = \{1, \ldots, 5000\}$, on the interval $[15014, 15014]$, the function $M_{\mathcal{D'}}$ does not appear to have a second sign change, as it attains a maximal value of $\approx - 0.27$ while the error term from this truncation cannot exceed $0.83  \left( 3.0907  -  \sum_{d >5000} Q(d) \sqrt{d} \right) \approx 0.022$, where $0.83 >  \max_{(0, 1/2)} f(x)$.

\end{proof}

\section{Acknowledgments}

I am very grateful to Andrew Sutherland for introducing this question and to Peter Sarnak for suggesting it as a project to me, as well as for many fruitful discussions. I would like to thank Jonathan Bober, Andrew Booker, Andrew Granville, Yang-Hui He, Min Lee, Michael Lipnowski, Mayank Pandey, Michael Rubinstein, and Will Sawin for enlightening conversations in connection to this problem, and Steven Wang for the many helpful comments on the first version of this paper.

The research was supported by NSF GRFP and the Hertz Fellowship.

\newpage

\bibliographystyle{plain}

\bibliography{murmurations}

\end{document}